\newtheorem{theo}{Theorem}[section]
\newtheorem{prop}[theo]{Proposition}
\newtheorem{lemm}[theo]{Lemma}
\newtheorem{coro}[theo]{Corollary}
\newtheorem{rema}[theo]{Remark}
\newtheorem{Defi}[theo]{Definition}
\newtheorem{ex}[theo]{Example}
\newtheorem{conj}[theo]{Conjecture}
\title{On the Lefschetz standard conjecture for Lagrangian covered  hyper-K\"{a}hler varieties}
\author{Claire Voisin\footnote{The author is supported by the ERC Synergy Grant HyperK (Grant agreement No. 854361).}}
\date{}
\newfont{\gothic}{eufb10}
\begin{document}
\maketitle
\setcounter{section}{-1}

\begin{abstract} We investigate the Lefschetz standard conjecture for degree $2$ cohomology of hyper-K\"ahler manifolds admitting a covering by Lagrangian subvarieties.  In the case of a Lagrangian fibration, we show that the Lefschetz standard conjecture is implied
by the SYZ conjecture  characterizing  classes of divisors associated with Lagrangian fibration. In dimension $4$, we consider  the more general case of a Lagrangian covered fourfold $X$, and prove   the Lefschetz standard conjecture in degree $2$, assuming $\rho(X)=1$ and $X$ is general in moduli. Finally we discuss various  links between Lefschetz cycles and  the study of the rational equivalence of points and Bloch-Beilinson type filtrations, giving a general interpretation of  a recent intriguing  result of Marian and Zhao.
 \end{abstract}
\section{Introduction}
The Lefschetz standard conjecture \cite{kleiman} for degree $k$ cohomology of a smooth projective variety of dimension
$N\geq k$  equipped with an ample line bundle $H_X$  asks whether there exists a
codimension $k$-cycle $\mathcal{Z}_{{\rm lef}}\in{\rm CH}^k(X\times X)$ such that, denoting $[\mathcal{Z}_{{\rm lef}}]\in H^{2k}(X\times X,\mathbb{Q})$ the cohomology class of $\mathcal{Z}_{{\rm lef}}$,
$$[\mathcal{Z}_{\rm lef}]^*: H^{2N-k}(X,\mathbb{Q})\rightarrow H^k(X,\mathbb{Q})$$
is the inverse of the Lefschetz isomorphism
$$h_X^{N-k}\cup :H^k(X,\mathbb{Q})\rightarrow H^{2N-k}(X,\mathbb{Q}),$$
where $h_X=c_1(H_X)$.
\begin{rema}\label{remaintrolefpaspol}{\rm  The Lefschetz standard conjecture is in fact  independent of the choice of polarization $h_X$ and it can be  formulated  without any reference to  this choice. Indeed, it is equivalent to the existence of a cycle $\mathcal{Z}_{\rm lef}$ such that  $[\mathcal{Z}_{\rm lef}]^*: H^{2N-k}(X,\mathbb{Q})\rightarrow H^k(X,\mathbb{Q})$ is an isomorphism. However, its implications rely on  the Hodge-Riemann-Lefschetz theory which uses  an explicit polarization.}
\end{rema}
\begin{rema}\label{remaintro}
{\rm When $k=2$, it suffices to show that $[\mathcal{Z}_{\rm lef}]^*: H^{N,N-2}(X)\rightarrow H^{2,0}(X)$
is the inverse of the Lefschetz isomorphism
$h_X^{N-2}\cup :H^{2,0}(X)\rightarrow H^{N,N-2}(X,\mathbb{Q})$. Indeed, the equality of morphisms
of Hodge structures $[\mathcal{Z}_{\rm lef}]^*\circ h_X^{N-2}\cup=Id$ is true on $H^2(X,\mathbb{Q})_{tr}$ once it is true on $H^{2,0}(X)$, and furthermore  we have $H^2(X,\mathbb{Q})=H^2(X,\mathbb{Q})_{tr}\oplus {\rm NS}(X)_{\mathbb{Q}}$ where the direct sum is orthogonal with respect to the nondegenerate Lefschetz intersection pairing $(\alpha,\beta)_{h_X}=\langle \alpha,h_X^{N-2}\beta\rangle_X$. Once we have a cycle $\mathcal{Z}$ satisfying  $[\mathcal{Z}]^*\circ h_X^{N-2}\cup=Id$   on $H^2(X,\mathbb{Q})_{tr}$, it is easy to construct  $\mathcal{Z}_{\rm lef}$  by adding to
$\mathcal{Z}$ a decomposable cycle in ${\rm NS}(X)\otimes {\rm NS}(X)$, so that $[\mathcal{Z}_{\rm lef}]^*\circ h_X^{N-2}\cup=Id$  holds   on the whole $H^2(X,\mathbb{Q})$.
}
\end{rema}

This conjecture is essential for the theory of motives as it allows to realize motivically the Lefschetz decomposition, which in turn implies  semisimplicity results for cohomological motives. The Lefschetz standard conjecture also implies the variational Hodge conjecture (see \cite{voisinhodge}). Except for hyper-K\"ahler manifolds that we will discuss in this paper,
the main class of varieties for which the  Lefschetz standard conjecture is known is the class of  abelian varieties by work of Lieberman \cite{lieberman}.
In degree $2$,  we will describe several equivalent formulations in Section \ref{seclef}, with emphasis on its relationship to the  Chow group of $0$-cycles modulo rational equivalence.

 If $X$ is a projective hyper-K\"ahler manifold of dimension $N=2n$, $H^{2,0}(X)$ is $1$-dimensional, generated by a $2$-form  $\sigma_X$. For such an $X$, the Lefschetz standard conjecture for degree $2$
 is equivalent to the existence of a
codimension $2$-cycle $Z\in{\rm CH}^2(X\times X)$ whose cohomology class $[Z]\in H^4(X\times X,\mathbb{Q})$ gives by K\"{u}nneth decomposition the inverse $q^{-1}\in {\rm Sym}^2H^2(X,\mathbb{Q})$ of  the Beauville-Bogomolov
form $q$ on $H^2(X,\mathbb{Q})$. This follows from the fact that the Beauville-Bogomolov form, restricted
to the transcendental cohomology, is a multiple of the Lefschetz intersection pairing
$q_{\rm lef}(\alpha,\beta)=\int_Xh_X^{2n-2}\alpha\beta$, as follows from the Beauville-Fujiki relations.
 The existence of such a cycle has been proved by Markman
\cite{markm} when $X$ is of $K3^{[n]}$-deformation type.
The Lefschetz standard conjecture  has been proved in all degrees  by Charles and Markman \cite{charlesmarkman}, following
\cite{markm} and  a programme presented in \cite{charles},
for projective hyper-K\"ahler manifolds of $K3^{[n]}$-deformation type. Note also that
the Lefschetz standard conjecture for degree $2$ is satisfied by  most of the explicitly known hyper-K\"ahler manifolds. For example,
it is true for the Fano variety of lines of a cubic fourfold \cite{bedo}, the LLSvS $8$-fold constructed from the family of degree $3$ rational curves in a smooth cubic fourfold \cite{llsvs}, the LSV $10$-fold constructed in \cite{lsv} as a compactification of the intermediate Jacobian fibration, and more generally the infinitely many families of  hyper-K\"ahler manifolds  constructed in \cite{bayeretal}. This follows, using Proposition \ref{procharles}, from the existence of a natural correspondence between these varieties and the corresponding cubic fourfold, whose motive is a direct summand in the motive of a surface. Of course, except in the case of the LSV variety, one can also apply the Charles-Markman theorem, since they are all of $K3^{[n]}$ deformation type. Similarly, the double EPW sextics constructed by O'Grady \cite{epw} satisfy the conjecture, either by using the correspondence to a Fano fourfold constructed by Iliev and Manivel \cite{ilma}, or by applying \cite{charlesmarkman}.

Our goal in this paper is to study the Lefschetz standard conjecture for degree $2$ cohomology of hyper-K\"ahler manifolds
which admit a Lagrangian fibration or covering. Our first result concerns the fibered case.
A central conjecture in the theory of hyper-K\"ahler manifolds is the following precise version of the SYZ conjecture.
\begin{conj}\label{conjlag} Let $X$ be a projective hyper-K\"ahler manifold and $l\in {\rm NS}(X)$  satisfying
the conditions

(1)  $q(l)=0$,

(ii) $l$ belongs to the boundary of the birational K\"ahler cone.

Then there exist a  hyper-K\"ahler manifold $\psi: X\dashrightarrow X'$ birational to $X$ and a  Lagrangian fibration $X'\rightarrow B$ on $X'$ such that $\psi^*l'=l$, where $l'$ is the Lagrangian class of $\pi'$.
\end{conj}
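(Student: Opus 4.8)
This statement is the (algebraic) SYZ conjecture, so I do not expect to produce a complete argument; the realistic plan is to reduce it to an abundance-type statement and to isolate the single step that is genuinely open.

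The plan is to first move $l$ into nef position by a birational modification. Using the description of the birational K\"ahler cone as the union $\bigcup_{\psi} \psi^* \mathcal{K}_{X'}$ over birational models (Huybrechts, Boucksom) and the fact that $l$ lies on its boundary, I would produce a hyper-K\"ahler model $\psi: X\dashrightarrow X'$ such that $l':=\psi_* l$ lies in the closure $\overline{\mathcal{K}_{X'}}$, i.e. $l'$ is nef, with $q(l')=q(l)=0$ since $\psi$ preserves the Beauville--Bogomolov form. Thus the problem reduces to showing that a nef isotropic class $l'$ on the boundary of the nef cone is the Lagrangian class of a fibration, and at the end $\psi^* l' = l$ will give the required compatibility. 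For the numerical dimension I would invoke the Bogomolov--Verbitsky structure of the subalgebra of $H^*(X')$ generated by $H^2$: the ideal of relations is generated in degree $n+1$ by the $\alpha^{n+1}$ with $q(\alpha)=0$, so $q(l')=0$ forces $l'^{n+1}=0$ while $l'^{n}\neq 0$, whence the numerical dimension equals $n$ and $\int_{X'} l'^{2n}=c_{X'}\,q(l')^n=0$ by Beauville--Fujiki.

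The heart of the matter is then to show that the nef class $l'$, with $\nu(l')=n<2n$, is \emph{semiample}, i.e. that $|ml'|$ is base-point-free for $m\gg 0$. This is exactly the open part of the conjecture: because $K_{X'}=0$, the standard base-point-free theorems of the log minimal model programme do not apply, and there is no general abundance statement available in this setting. I expect the only viable route to be deformation-theoretic: prove semiampleness for one member of each deformation type --- for $K3^{[n]}$-type this is accessible through Bridgeland stability and lattice theory (Bayer--Macr\`i, Markman), where isotropic classes are shown to induce genuine Lagrangian fibrations --- and then propagate the fibration in families, using that the Hodge locus where $l$ stays of type $(1,1)$ and isotropic is the deformation space of the pair $(X,l)$ and that the Lagrangian condition is closed. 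The delicate point, which I regard as the main obstacle, is that deforming $X$ while keeping $l$ algebraic and isotropic can destroy projectivity or push $l'$ off the nef cone, so controlling the nef position uniformly along the family is where the argument would have to be made to work.

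Granting semiampleness, the conclusion is structural. I would take $\pi': X'\to B$ to be the Stein factorisation of $\phi_{|ml'|}$; then $0<\dim B=\nu(l')=n<2n$, so Matsushita's theorem applies and shows that $\pi'$ is a Lagrangian fibration whose general fibre is an abelian variety Lagrangian for $\sigma_{X'}$, with $n$-dimensional base $B$; by Hwang's theorem $B\cong \mathbb{P}^n$ when $B$ is smooth. Finally, since by construction $l'$ is proportional to $\pi'^* H_B$ for $H_B$ ample on $B$, it is by definition the Lagrangian class of $\pi'$, and pulling back through $\psi$ yields $\psi^* l'=l$, as required.
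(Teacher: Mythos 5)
The statement you were asked to prove is stated in the paper as Conjecture \ref{conjlag} (the SYZ conjecture); the paper contains no proof of it and does not claim one. It is used purely as a hypothesis, namely as condition (1) of Theorem \ref{theobasic}. So there is no argument in the paper against which to measure yours, and your opening admission that you cannot close the argument is exactly right.

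On its own terms, your reduction is the standard and correct one: Huybrechts--Boucksom's description of the birational K\"ahler cone as $\bigcup_\psi \psi^*\overline{\mathcal{K}_{X'}}$ lets you move $l$ to a nef isotropic class $l'$ on a birational model, the Verbitsky/Fujiki relations give $l'^{n+1}=0$ and $l'^{n}\neq 0$ so the numerical dimension is $n$, and once $l'$ is known to be semiample, Matsushita's theorem converts the resulting fibration into a Lagrangian one with $l'$ proportional to the pullback of an ample class on the base. The gap you isolate --- semiampleness of a nef isotropic class on a hyper-K\"ahler manifold, where the base-point-free theorem is unavailable because $K_{X'}=0$ and $l'$ is not big --- is precisely the open content of the conjecture (sometimes called the hyper-K\"ahler abundance problem), and your proposed deformation-theoretic route is the one that has succeeded in the known cases ($K3^{[n]}$-type via Bayer--Macr\`i and Markman, and a few other deformation classes), with exactly the difficulty you name: controlling projectivity and the nef position of $l$ along the Hodge locus. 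In short, your proposal is an accurate account of the state of the art, not a proof, and no proof exists in the paper or, at present, in the literature for general $X$.
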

Our first result is the following.
\begin{theo}\label{theobasic} Let $X$ be a projective hyper-K\"ahler manifold. Assume that $X$ admits a Lagrangian fibration
$\pi:X\rightarrow B$ and that $\rho(X)=2$. Then if one of the following conditions

\begin{enumerate}
\item \label{condSYZ}  $X$ satisfies the SYZ conjecture \ref{conjlag};

\item \label{condtheta}  There is an effective divisor $\Theta$ on $X$ which satisfies $q(\theta)<0$;
\end{enumerate}

holds, $X$ satisfies the Lefschetz standard conjecture for degree $2$ cohomology.
\end{theo}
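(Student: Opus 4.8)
\emph{Plan of proof.} By Remark \ref{remaintro} it suffices to produce a cycle $\mathcal{Z}\in \mathrm{CH}^2(X\times X)$ whose class inverts the hard Lefschetz operator $\cup\, h_X^{2n-2}$ on the one-dimensional space $H^{2,0}(X)=\langle\sigma_X\rangle$, equivalently on the transcendental part $H^2(X,\mathbb{Q})_{tr}$; since $\rho(X)=2$ the Néron--Severi contribution to $q^{-1}$ is automatically algebraic and can be corrected by a decomposable cycle in $\mathrm{NS}(X)\otimes\mathrm{NS}(X)$. So the whole problem is to realize the transcendental part of $q^{-1}$ by an algebraic cycle. I would first record the lattice picture: on the rank-two lattice $\mathrm{NS}(X)$ the form $q$ has signature $(1,1)$ and $l$ spans one of its two isotropic rays.

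The geometric input I would use is that the smooth fibres $X_b$ of $\pi$ are abelian varieties (they are Lagrangian for $\sigma_X$) and that each hypothesis provides a divisor class $D$ restricting to a polarization on them. Indeed, for any $D\in\mathrm{NS}(X)$ not proportional to $l$ one has $q(l,D)\neq 0$, and since $[X_b]$ is proportional to $l^n$, the Beauville--Fujiki relation gives $\int_{X_b}(D|_{X_b})^n = c\,q(l,D)^n\neq 0$ for a nonzero constant $c$, so $D|_{X_b}$ is non-degenerate. The role of the two hypotheses is to upgrade this to a \emph{relative} principal polarization together with a relative Poincaré sheaf, and to identify $X$, at least birationally, with the dual Lagrangian fibration $X^\vee\to B$: under (2) the class $\Theta$ furnishes an honest effective relative theta divisor, while under (1) the SYZ conjecture \ref{conjlag}, applied to the relevant isotropic class, realizes the dual fibration as a birational hyper-K\"ahler model and yields the section needed to construct the Poincaré sheaf.

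With this in hand I would build $\mathcal{Z}$ as the relative Fourier--Mukai correspondence attached to the relative Poincaré bundle on $X\times_B X$, extended to a cycle on $X\times X$, keeping the codimension-two component that governs the action on the fibrewise $H^1$. Fibrewise this is exactly the Lieberman cycle inverting $\cup\,\eta^{\,n-1}\colon H^1(X_b)\to H^{2n-1}(X_b)$ for $\eta=D|_{X_b}$, i.e. relative hard Lefschetz in degree one. To see this suffices globally, I would use that the Leray spectral sequence of $\pi$ degenerates and that $\sigma_X|_{X_b}=0$ together with $\pi^*H^2(B)=\langle l\rangle\subset\mathrm{NS}(X)$ force an injection $H^2(X,\mathbb{Q})_{tr}\hookrightarrow \mathrm{Gr}^1_LH^2(X)=H^1(B,R^1\pi_*\mathbb{Q})$. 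On this piece $\cup\, h_X^{2n-2}$ factors, through the associated graded for the Leray filtration, as a product of the base operator $\cup\, l^{\,n-1}$ on $H^1(B,-)$ and the relative operator $\cup\,\eta^{\,n-1}$ on $R^1\pi_*$. The first factor is an isomorphism realized by powers of the algebraic class $l$ (hard Lefschetz on $\mathbb{P}^n$), and the second is inverted by the relative Fourier--Mukai transform; assembling the two inverses yields $\mathcal{Z}$.

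The main obstacle is the passage from the smooth part of the fibration to a genuine cycle on the compact $X\times X$: one must construct the relative Poincaré correspondence over all of $B$, including over the discriminant where the fibres degenerate, and compactify it without spoiling the cohomological computation above. This is precisely where the strength of the two hypotheses is needed---an effective $\Theta$, respectively the SYZ-produced dual fibration---to guarantee that the relative theta/Poincaré data extend and that the induced action on $\mathrm{Gr}^1_LH^2(X)$ remains the inverse of relative hard Lefschetz. Controlling the contribution of the singular fibres to $[\mathcal{Z}]$, and checking that the combinatorial constants arising from expanding $h_X^{2n-2}$ in the Leray associated graded are nonzero, is the delicate heart of the argument.
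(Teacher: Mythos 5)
There is a genuine gap, and it sits at the center of your construction. Any correspondence supported on $X\times_B X$ is a \emph{relative} correspondence, so by the projection formula its action on cohomology commutes with multiplication by classes pulled back from $B$, i.e.\ $[\Gamma]^*(l^k\alpha)=l^k[\Gamma]^*(\alpha)$. Since the Beauville--Fujiki relations give $h_X^{2n-2}\sigma_X=c\,l^{n-1}{l'}^{n-1}\sigma_X$ with $l'$ the second isotropic class of ${\rm NS}(X)$, the image of $h_X^{2n-2}\sigma_X$ under your relative Fourier--Mukai cycle necessarily carries the factor $l^{n-1}$ and lives in $H^{2n}(X)$, not in $H^{2}(X)$. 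Your plan to cancel the ``base factor'' by observing that $\cup\,l^{n-1}$ is hard Lefschetz on $B$ does not produce a cycle acting in the required direction: what you need is an algebraic correspondence \emph{inverting} $\cup\,l^{n-1}$ on $H^1(B,R^1\pi_*\mathbb{Q})$, which is essentially the Lefschetz standard conjecture for that local system and is not supplied by powers of $l$. So a single fibration $\pi$, however carefully you compactify its Poincar\'e correspondence, cannot invert $h_X^{2n-2}\cup$ on $H^{2,0}(X)$.

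This is exactly why the paper's proof needs a \emph{second} correspondence attached to the other isotropic class $l'$, and it is here --- not in extending data over the discriminant --- that the hypotheses enter. The paper builds $\Gamma_\pi$ by translating an ample divisor along the fibres (closure over the discriminant is harmless, since the action on $\sigma_X$ is detected on a Zariski open set by Deligne's theorem), obtaining $[\Gamma_\pi]^*(h^{2n-2}\sigma_X)=c_1 l^{n-1}\sigma_X$. Then, using Riess's theorem, either $l'$ lies on the boundary of the birational K\"ahler cone --- in which case the SYZ conjecture (hypothesis (1)) yields a second Lagrangian fibration on a birational model with class $l'$, transported back to $X$ by Huybrechts' birational correspondences --- or Riess's cycle $R$ sends $l'$ to $l$, in which case one conjugates $\Gamma_\pi$ by $R$. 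Hypothesis (2) is used only to rule out the first alternative: $q(\theta)<0$ for an effective $\Theta$ forces $q(\theta,l')<0$, so $l'$ cannot be on the boundary of the birational K\"ahler cone; your reading of $\Theta$ as a relative theta divisor is not what happens. Either way one gets a second cycle with $[\Gamma_{l'}]^*(h^{2n-2}\sigma_X)=c\,{l'}^{n-1}\sigma_X$, and the Lefschetz cycle is the codimension-$2$ composition $\Gamma_{l'}\circ{}^t\Gamma_\pi$, whose nontriviality on $H^{2,0}$ reduces to the Hodge--Riemann nonvanishing $\int_X(ll')^{n-1}\sigma_X\overline{\sigma_X}\neq 0$. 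Without this two-class mechanism your argument does not close.
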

Here $\theta=[\Theta]$ and $q$ is the Beauville-Bogomolov quadratic form on $H^2(X,\mathbb{Q})$.
We now turn to the more general case of a Lagrangian covering, with the following definition.
\begin{Defi} \label{defigoodcov} A   covering of a hyper-K\"ahler manifold $X$ by Lagrangian varieties is a diagram
of morphisms
$$
\begin{matrix}
 & L&\stackrel{\Phi}{\longrightarrow}& X
 \\
&\pi\downarrow& &
\\
&B& &
\end{matrix}
$$
of smooth projective varieties, such that $\Phi$ is dominant and,  for a general point $b\in B$,
 $\Phi_{\mid L_b}$ is generically finite to its image which is a (singular) Lagrangian subvariety of $X$.
\end{Defi}
In practice, we can assume that the map $\Phi_{\mid L_b}$ is  birational to its image.
\begin{ex}\label{examplenonum}{\rm  If $X$ contains a smooth Lagrangian subvariety $L_0\subset X$ whose normal bundle $N_{L_0/X}$ is
generated by global sections, then $X$ admits a covering by Lagrangian varieties. This follows from the unobstructedness result of \cite{voisinlag} for deformations of Lagrangian submanifolds of hyper-K\"ahler manifolds. }
\end{ex}
 It is not known if such a covering exists for any projective hyper-K\"ahler manifold but it exists on
general members of some locally complete families  of hyper-K\"ahler manifolds with Picard number $1$ as the following example shows.
\begin{ex}\label{exfanolagcov}{\rm Let $Y\subset \mathbb{P}^5$ be a smooth cubic fourfold and $X=F_1(Y)$ be its Fano variety of lines. This is a hyper-K\"ahler fourfold by \cite{bedo}.
For any hyperplane $H\subset \mathbb{P}^5$, the hyperplane section $Y_H=Y\cap H$ is a cubic threefold, whose variety of lines is, if $H$ is general, or for any $H$ if $Y$ is general, a surface $\Sigma_H\subset X$ which is Lagrangian, as is observed in \cite{voisinlag}. The family of these surfaces gives a  Lagrangian covering of $X$.}
\end{ex}
 O'Grady conjectured it should exist in general. In contrast,  the Lagrangian fibered hyper-K\"ahler manifolds have $\rho\geq 2$, so a priori Theorem \ref{corodim4}
applies to a broader setting in dimension $4$.

\begin{theo}\label{corodim4} Let  $X$ be a projective hyper-K\"ahler fourfold with $\rho(X)=1$ admitting a covering by Lagrangian surfaces. Assume $X$ is very general in moduli.
Then $X$ satisfies the Lefschetz standard conjecture for degree $2$ cohomology.
\end{theo}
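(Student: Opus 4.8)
The plan is to reduce the statement to a single nonvanishing assertion and to exploit the rigidity coming from the very general hypothesis. By Remark \ref{remaintro} it suffices to produce a codimension $2$ cycle $Z\in {\rm CH}^2(X\times X)$ whose $(2,2)$-K\"unneth component, viewed in ${\rm Sym}^2H^2(X,\mathbb{Q})$, restricts on the transcendental part to a nonzero multiple of the inverse Beauville--Bogomolov form $q^{-1}$; the ${\rm NS}(X)_{\mathbb{Q}}=\mathbb{Q}h_X$ correction is then immediate since $\rho(X)=1$. Here the hypothesis that $X$ is very general enters decisively: the Hodge structure $H^2(X,\mathbb{Q})_{tr}$ is then an irreducible module under its Mumford--Tate group, which is the full special orthogonal group of $(H^2_{tr},q)$, so that $({\rm Sym}^2H^2_{tr})^{{\rm Hdg}}=\mathbb{Q}\,q^{-1}$ is one-dimensional. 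Consequently \emph{any} algebraic cycle $Z$ for which $[Z]^*\colon H^6(X,\mathbb{Q})_{tr}\to H^2(X,\mathbb{Q})_{tr}$ is nonzero automatically has transcendental component equal to a nonzero multiple of $q^{-1}$, and after rescaling provides the Lefschetz inverse. Thus the entire problem becomes: build one algebraic cycle acting nontrivially on transcendental $H^2$.

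To build the cycle I would first normalize the covering of Definition \ref{defigoodcov}. Since $\Phi$ is dominant with equidimensional fibres, restricting the parameter space to a general smooth complete intersection surface $B'\subset B$ and setting $L'=\pi^{-1}(B')$ keeps $\Phi_{\mid L'}\colon L'\to X$ dominant and generically finite, while the fibres of $\pi_{\mid L'}$ are still the Lagrangian surfaces $L_b$. Renaming, I may assume $\dim B=2$ and $\Phi\colon L\to X$ generically finite surjective. I then take $Z=(\Phi\times\Phi)_*[L\times_B L]$, which has codimension $2$ because $L\times_B L$ has dimension $6$ and maps generically finitely to $X\times X$. Writing ${\rm pr}_1,{\rm pr}_2\colon L\times_BL\to L$ for the two projections, the correspondence acts as $[Z]^*=\Phi_*\circ({\rm pr}_{2*}\,{\rm pr}_1^*)\circ\Phi^*$, and proper base change for the Cartesian square gives ${\rm pr}_{2*}\,{\rm pr}_1^*=\pi^*\pi_*$ (to be applied on the smooth locus of $\pi$, which carries all the relevant cohomology). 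Hence $[Z]^*=\Phi_*\,\pi^*\pi_*\,\Phi^*$ as an operator $H^6(X)\to H^2(X)$.

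Applying this to $\beta=h_X^{2}\cup\sigma_X$, and setting $h_L=\Phi^*h_X$, $\widetilde\sigma=\Phi^*\sigma_X$, I obtain $[Z]^*(h_X^2\sigma_X)=\Phi_*\pi^*\omega_B$ with $\omega_B:=\pi_*(h_L^2\widetilde\sigma)\in H^2(B)$. This is the structural heart of the argument: the Lagrangian hypothesis says $\widetilde\sigma$ restricts to $0$ on each fibre $L_b$, which forces $\omega_B$ to be of type $(2,0)$, i.e.\ a holomorphic $2$-form on $B$; therefore $\Phi_*\pi^*\omega_B$ lies in the one-dimensional space $H^{2,0}(X)=\mathbb{C}\sigma_X$, and $[Z]^*(h_X^2\sigma_X)=c\,\sigma_X$ for some scalar $c$. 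In other words, being Lagrangian is exactly what makes this correspondence send $H^{4,2}$ back into $H^{2,0}$, the correct target for the inverse of the Lefschetz operator.

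The main obstacle is the nonvanishing $c\neq 0$. Chasing the projection formula through the Cartesian square yields $\int_B\omega_B\wedge\overline{\omega_B}=c\int_X\sigma_X\wedge h_X^2\wedge\overline{\sigma_X}$, and the right-hand integral is a nonzero Hodge--Riemann quantity; thus $c\neq0$ is equivalent to $\omega_B\neq0$. To prove $\omega_B\neq 0$ I would compute it as an alternating form on $T_bB$: by the unobstructedness of Lagrangian deformations \cite{voisinlag} one has a canonical isomorphism $T_bB\cong H^0(L_b,\Omega^1_{L_b})$, $u\mapsto \alpha_u:=\iota_u\sigma_X\mid_{L_b}$, and in a Darboux model the normal directions are isotropic for $\sigma_X$, so the naive ``leading'' contribution $\int_{L_b}\sigma_X(u,v)\,h_{L_b}^2$ vanishes identically and $\omega_B(u,v)$ is governed by a cross term pairing the holomorphic forms $\alpha_u,\alpha_v$ against the $(0,1)$-forms $\iota_{\bullet}h_X\mid_{L_b}$ through the (positive definite) Hodge--Riemann form on $H^1(L_b)$. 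Establishing that this alternating form does not vanish—i.e.\ ruling out cancellation in the fibre integral, which is where the covering property should be used—is the crux and the step I expect to be hardest. A natural fallback, should a direct computation be delicate, is a deformation argument: the class $[Z]_{2,2}\in{\rm Sym}^2H^2$ deforms in the (locally complete) family and its $q^{-1}$-component is locally constant, so nonvanishing at a single well-understood member—such as the Fano variety of Example \ref{exfanolagcov}, or a special fibre degenerating to a Lagrangian fibration governed by Theorem \ref{theobasic}—would suffice to conclude for the very general $X$.
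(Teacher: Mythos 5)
Your overall strategy --- reduce to producing one algebraic cycle acting nontrivially on $H^2(X,\mathbb{Q})_{tr}$, then use the maximality of the Mumford--Tate group of the very general $X$ together with $\rho(X)=1$ to upgrade ``nonzero'' to ``the Lefschetz inverse up to scale'' --- is exactly the paper's strategy (Lemma \ref{lenouveaupourdim4} combined with Proposition \ref{proassezgencriterelef}), as is the preliminary reduction of $B$ to a surface. The gap is in the construction of the cycle. You take $Z=(\Phi\times\Phi)_*[L\times_B L]$, and your own computation $[Z]^*=\Phi_*\,\pi^*\pi_*\,\Phi^*$ shows that this correspondence factors through $H^*(B)$: its action on $h_X^2\sigma_X$ is $\Phi_*\pi^*\omega_B$ with $\omega_B=\pi_*(h_L^2\widetilde\sigma)\in H^{2,0}(B)$. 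Hence $c\neq 0$ forces $\Phi_*\pi^*$ to be nonzero on $H^{2,0}(B)$ --- but in that case the correspondence in ${\rm CH}^2(X\times B)$ given by the covering already proves the theorem outright via Proposition \ref{procharles}, and this is precisely the easy case that the paper dispatches in one line. In the remaining case, which is the substance of the theorem (and which always occurs when $H^{2,0}(B)=0$, e.g.\ for a rational base, and more generally whenever $\Phi_*\pi^*H^{2,0}(B)=0$), your cycle satisfies $[Z]^*(h_X^2\sigma_X)=\Phi_*\pi^*\omega_B=0$ identically. So $c=0$ for structural reasons: neither the fibrewise evaluation of the alternating form $\omega_B(u,v)$ nor the proposed specialization to the Fano of lines can rescue the argument, because there is no cancellation to be ruled out --- the correspondence is simply blind to $\sigma_X$ whenever $\sigma_X$ does not come from $B$.

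The paper avoids this by replacing the fundamental class of $L\times_B L$ with a \emph{relative Poincar\'e divisor} $d\in{\rm CH}^1(L\times_B L)$, pulled back from a Poincar\'e divisor on ${\rm Alb}(L/B)\times_B{\rm Pic}^0(L/B)$ via a relative Albanese map, and pushed forward to a codimension $3$ cycle $\Gamma_{L,X}\in{\rm CH}^3(X\times X)$. Because $d$ induces a nontrivial self-correspondence of each fibre $L_b$ acting on $H^1(L_b)$, the resulting correspondence detects the component of $\Phi^*\sigma_X$ in $\Omega^1_B\otimes R^0\pi_*\Omega^1_{L/B}$ --- which is exactly where the transcendental information sits when it is not pulled back from $H^{2,0}(B)$ --- and Proposition \ref{coroprovisoire}, resting on the fibrewise duality computation of Proposition \ref{prodebaseactGamma}, yields $[\Gamma_{L,X}]^*(h_X\sigma_X)=\lambda'\sigma_X$ with $\lambda'\neq 0$. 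One then composes $\Gamma_{L,X}$ with its transpose to land in codimension $2$ and concludes with Proposition \ref{proassezgencriterelef}. The essential idea missing from your proposal is this fibrewise Poincar\'e-divisor correspondence; the surrounding architecture of your argument (Mumford--Tate rigidity, Hodge--Riemann nonvanishing, reduction of the base) is sound and matches the paper.
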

\begin{rema}\label{remaspec} {\rm  The Lefschetz cycle $\mathcal{Z}_{\rm lef}^2$ that exists at the fiber over  a very general point can be specialized to the fiber over any point in the moduli space of smooth deformations, proving in turn the Lefschetz standard conjecture  in degree $2$ for any smooth  member   $X_t$ of the moduli space of polarized deformations of $X$.}
\end{rema}

\begin{coro}\label{corodim4gen}  Let  $X$ be a projective hyper-K\"ahler fourfold with $\rho(X)=1$ containing a smooth Lagrangian surface
$L_0$ whose normal bundle is globally generated.  Then $X$ satisfies the Lefschetz standard conjecture for degree $2$ cohomology.
\end{coro}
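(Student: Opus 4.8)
The plan is to deduce the corollary from Theorem \ref{corodim4} by a deformation-and-specialization argument, the point being that the hypothesis of the corollary is both satisfied by $X$ itself and stable under deformation, whereas Theorem \ref{corodim4} applies only at a very general member. First I would apply Example \ref{examplenonum}: since $L_0\subset X$ is a smooth Lagrangian surface with globally generated normal bundle $N_{L_0/X}$, the unobstructedness theorem of \cite{voisinlag} guarantees that the deformations of $L_0$ inside $X$ are unobstructed, and global generation of $N_{L_0/X}$ forces these deformations to sweep out $X$. Hence $X$ admits a covering by Lagrangian surfaces in the sense of Definition \ref{defigoodcov}.

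Next I would deform the pair $(X,L_0)$. By the same deformation theory of \cite{voisinlag}, $(X,L_0)$ fits into a family $\mathcal{X}\to S$ of hyper-K\"ahler fourfolds, each fiber $X_s$ carrying a smooth Lagrangian surface $L_s$ deforming $L_0$. Two observations make the very general fiber eligible for Theorem \ref{corodim4}. First, global generation of the normal bundle is an open condition, so $N_{L_s/X_s}$ stays globally generated for general $s$; by Example \ref{examplenonum} each such $X_s$ again admits a Lagrangian covering. Second, since $X$ itself lies in $S$ with $\rho(X)=1$ and the Picard number is upper semicontinuous in families, the very general member $X_s$ of $S$ has $\rho(X_s)=1$ as well. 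Reading ``very general in moduli'' in Theorem \ref{corodim4} as very general among the $\rho=1$ fourfolds admitting a Lagrangian covering, I can then apply that theorem to $X_s$ and obtain a Lefschetz cycle $\mathcal{Z}_{\rm lef}^2\in {\rm CH}^2(X_s\times X_s)$ whose class is the inverse Beauville-Bogomolov form $q^{-1}\in {\rm Sym}^2H^2(X_s,\mathbb{Q})$.

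Finally I would specialize back to $X$. As $X=X_0$ is a special member of the same irreducible family $S$ in which $X_s$ is very general, Remark \ref{remaspec} applies: the cycle $\mathcal{Z}_{\rm lef}^2$ spreads out over a finite cover of $S$ and specializes to a codimension $2$ cycle on $X\times X$. Because $q^{-1}$ is the deformation-invariant inverse Beauville-Bogomolov class, it is a flat section over $S$, so the identity $[\mathcal{Z}_{\rm lef}^2]=q^{-1}$ is preserved under specialization. This establishes the Lefschetz standard conjecture in degree $2$ for $X$.

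The main obstacle is the second step: one must verify carefully that the Lagrangian covering truly persists to a member of $S$ general enough for the constructions underlying Theorem \ref{corodim4} to run, and that the Picard number does not jump along $S$. Both points reduce to the unobstructedness and openness properties recalled above, whereas the specialization of cycles in the last step is routine once Remark \ref{remaspec} is granted.
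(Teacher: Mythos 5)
Your argument follows the paper's proof essentially step for step: invoke Example \ref{examplenonum} to produce the Lagrangian covering, deform the pair $(L_0,X)$ over the polarized moduli space, apply Theorem \ref{corodim4} at a very general member, and specialize the Lefschetz cycle back via Remark \ref{remaspec}. The one place where your justification is too quick is the assertion that ``global generation of the normal bundle is an open condition.'' For a family of sheaves this is false in the direction you need it (from the special fiber $X=X_0$ to the nearby fibers $X_t$): sections of $N_{L_0/X}$ generating it could a priori fail to extend to $N_{L_{0,t}/X_t}$ if $h^0$ drops at the general fiber, exactly as sections of a trivial line bundle do not extend to a nontrivial deformation of it. The paper closes this gap with a specific observation: since $L_{0,t}$ is Lagrangian, the symplectic form identifies $N_{L_{0,t}/X_t}$ with $\Omega_{L_{0,t}}$, so $h^0(N_{L_{0,t}/X_t})=h^{1,0}(L_{0,t})$ is a deformation-invariant Hodge number; constancy of $h^0$ then guarantees that the generating sections extend and that global generation persists on a Zariski dense open set of the moduli space. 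You should insert this point; with it, your proof is the paper's proof. (Your appeal to ``upper semicontinuity'' of the Picard number is also loose --- the correct statement is that $\rho$ can only jump up on a countable union of proper closed subsets of the polarized moduli space --- but the conclusion that the very general polarized deformation has $\rho=1$ is standard and is what is needed.)
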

\begin{proof}   By using the deformations of $L_0$ in $X$  as explained in Example \ref{examplenonum}, $X$ admits a covering by Lagrangian surfaces.
As $\rho(X)=1$, we know by \cite{voisinlag} that the existence of $L_0$  is satisfied  in a Zariski open set of
the moduli space of polarized deformations of $X$. Furthermore, as the normal bundle of a Lagrangian submanifold is isomorphic to its cotangent bundle, the dimension of its space of global sections remains constant under deformations  and thus,  the normal bundle remains globally generated
for the deformed pair $L_{0,t}\subset X_t$, at least for $t$ in a Zariski dense open set of the moduli space. For a very general deformation  $X_t$ of $X$,    Theorem \ref{corodim4} thus applies. We conclude the proof for the original $X$ using Remark \ref{remaspec}.
\end{proof}

\begin{rema}{\rm The assumptions of Corollary \ref{corodim4gen} are satisfied in Example \ref{exfanolagcov}.
}
\end{rema}

In Section \ref{secprooflagcov}, we will also establish  Propositions \ref{proassezgencriterelef} and
\ref{coroprovisoire} which provide two   steps towards proving  the Lefschetz standard conjecture in degree $2$ for Lagrangian covered
hyper-K\"ahler manifolds of any dimension, assuming the base $B$ of the Lagrangian covering has $H^{2,0}(B)=0$ (an assumption that we do not need in dimension $4$).

The second theme of this paper, developed in Section \ref{SEC2},  is the relation between  the Lefschetz standard conjecture in degree $2$ and
rational equivalence of points on smooth projective  varieties  whose algebra of holomorphic forms is generated in degree $\leq 2$.
In the section \ref{seclef} devoted to general facts about
Chow groups of $0$-cycles and Lefschetz conjecture, we will  define geometrically the level $F^3_{dR}{\rm CH}_0(X)\subset F^2{\rm CH}_0(X)$ for any smooth complex projective varieties, where $F^2{\rm CH}_0(X)$ is the group of zero-cycles of degree $0$ and Albanese equivalent to $0$, and we will
establish in Theorem  \ref{propeqlef} a precise relationship between the Lefschetz conjecture for degree $2$ cohomology
 and $F^3_{dR}{\rm CH}_0(X)$. Whether this subgroup coincides or not with other definitions $F^3_{BB}{\rm CH}_0(X)$ proposed in
 \cite{voisinanali} for the  $F^3$ level of the Bloch-Beilinson filtration  precisely depends on
 the Lefschetz conjecture in degree 2 and the Bloch conjecture for $0$-cycles on a surface.
 We propose and discuss  in Section \ref{secconjzero}
a conjecture  (see Conjecture \ref{conjsurpoints}) on the rational equivalence of points of a smooth projective hyper-K\"ahler manifold.
It states  that two points $x, \,y$ have the same class in ${\rm CH}_0(X)$ if and only they have the same class in
 ${\rm CH}_0(X)/F^3_{BB}{\rm CH}_0(X)$, where we refer to Section \ref{seclef} for the definition of $F^3_{BB}$.
 This conjecture
 is motivated by the recent work \cite{marian} of  A. Marian and  X.  Zhao, which is an interesting evidence for it. We discuss further evidence  and relate it to a conjectural polynomial formula expressing partially the diagonal of a  hyper-K\"ahler manifold as a polynomial
 in the Lefschetz cycle. As will be discussed there, the conjecture should hold more generally for smooth projective varieties whose algebra of holomorphic forms is generated in degree $\leq 2$,  but in the hyper-K\"ahler case, especially the Lagrangian fibered case, it seems more accessible, thanks to polynomial relations in the Chow ring that are expected to hold (see \cite{beauvoi}, \cite{shenvial}) and are known to hold in some cases, (see  \cite{voisinpamq}).
  We also describe  some  classes of varieties for which the conjecture holds. For example, using Beauville's formulas, we prove in Proposition \ref{propkummer} that  two points $x,\,y$ of a  Kummer variety $K=A/\pm Id$ of an abelian variety are rationally equivalent if and only if $x=y$ modulo $F^3_{BB}{\rm CH}_0(K)$, if and only if
 $x_{\leq 2}=y_{\leq 2} \in{\rm CH}_0(K)_{\leq 2}$, see \cite{beufourier} for the definition of the Beauville decomposition which is used to define ${\rm CH}_0(K)_{\leq 2}$).

We use the notation   ${\rm CH}^i(X)$ for the Chow groups with $\mathbb{Q}$-coefficients.

\vspace{0.5cm}

{\bf Thanks.} {\it I thank Giulia Sacc\`a for interesting discussions on the subject of this paper and the anonymous referee for  his/her careful reading and useful comments.}

\section{The Lefschetz standard conjecture for hyper-K\"{a}hler manifolds swept-out by Lagrangian varieties\label{secleftheosproof}}
This section is devoted to the proof of Theorems \ref{theobasic} and \ref{corodim4}.
We will use the following result  proved in \cite{charles}.
\begin{prop}\label{procharles} Let $X$ be a smooth projective variety. Then the Lefschetz standard conjecture holds
for degree $2$ cohomology of $X$ if and only if there exist a smooth projective surface $\Sigma$ and a
codimension $2$ cycle
$Z_\Sigma\in {\rm CH}^2(X\times \Sigma)$, such that
$$[Z_\Sigma]^*: H^{2,0}(\Sigma)\rightarrow H^{2,0}(X)$$
is surjective.
\end{prop}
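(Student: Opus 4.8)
The plan is to prove both implications, the forward one being elementary and the converse carrying the real content. Throughout write $N=\dim X$ and $h=h_X$, and recall from Remark \ref{remaintro} that the Lefschetz standard conjecture in degree $2$ is equivalent to the existence of a cycle realizing the inverse of the Lefschetz isomorphism on the transcendental part, the Néron--Severi part being dealt with by adding a decomposable cycle. For the forward implication I assume there is $\mathcal{Z}_{\rm lef}\in{\rm CH}^2(X\times X)$ with $[\mathcal{Z}_{\rm lef}]^*\circ(h^{N-2}\cup)={\rm Id}$ on $H^{2,0}(X)$. I would take for $\Sigma$ a smooth surface cut out by a general complete intersection of $N-2$ members of $|H_X|$, with inclusion $j:\Sigma\hookrightarrow X$, so that the self-intersection relation $j_*\circ j^*=h^{N-2}\cup$ holds on $H^2(X)$. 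Setting $Z_\Sigma:=\mathcal{Z}_{\rm lef}\circ{}^t\Gamma_j\in{\rm CH}^2(X\times\Sigma)$ (the graph of $j$ read as a correspondence from $\Sigma$ to $X$, composed with $\mathcal{Z}_{\rm lef}$), a degree count confirms $Z_\Sigma$ is of codimension $2$ and that $[Z_\Sigma]^*=[\mathcal{Z}_{\rm lef}]^*\circ j_*$. For $\alpha\in H^{2,0}(X)$ one then computes $[Z_\Sigma]^*(j^*\alpha)=[\mathcal{Z}_{\rm lef}]^*(h^{N-2}\cup\alpha)=\alpha$, so $[Z_\Sigma]^*$ is surjective onto $H^{2,0}(X)$.

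For the converse, suppose $\Sigma$ and $Z_\Sigma$ are given with $\gamma:=[Z_\Sigma]^*$ surjective onto $H^{2,0}(X)$. Since $\gamma$ is induced by an algebraic cycle it is a morphism of Hodge structures $H^2(\Sigma)\to H^2(X)$; its image being a sub-Hodge structure containing $H^{2,0}(X)$, it contains the transcendental part $H^2(X)_{\rm tr}$, and using that a Hodge morphism sends $\mathrm{NS}(\Sigma)$ into $\mathrm{NS}(X)_{\mathbb{Q}}$ one checks that $\gamma$ induces a surjection $H^2(\Sigma)_{\rm tr}\twoheadrightarrow H^2(X)_{\rm tr}$. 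Composing with the \emph{algebraic} orthogonal projectors onto the transcendental parts (the Lefschetz form being nondegenerate on $\mathrm{NS}_{\mathbb{Q}}$, as in Remark \ref{remaintro}), I may regard $\gamma$ as a surjective algebraic morphism $H^2(\Sigma)_{\rm tr}\to H^2(X)_{\rm tr}$. The next step is to produce an algebraic adjoint. Equip $H^2(\Sigma)$ with its intersection form and $H^2(X)$ with the Lefschetz form $(u,v)_X=\int_X h^{N-2}uv$; on the transcendental, hence primitive, parts these are polarizations of the weight-$2$ Hodge structures by the Hodge--Riemann relations. A projection-formula computation identifies the adjoint $\gamma^\dagger$ of $\gamma$ for these forms with ${}^tZ_\Sigma$ twisted by the Lefschetz operator, i.e. $\gamma^\dagger=[{}^tZ_\Sigma]^*\circ(h^{N-2}\cup)$; crucially, because $h^{N-2}$ is an algebraic class, $\gamma^\dagger$ is again realized by a cycle.

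The heart of the argument is to invert $M:=\gamma\circ\gamma^\dagger$ algebraically on $H^2(X)_{\rm tr}$, and this is the step I expect to be the main obstacle. The tension is that algebraicity of the adjoint forces the use of the Lefschetz \emph{bilinear} polarization (so that $h^{N-2}$ enters), whereas invertibility of $M$ needs the \emph{positivity} of the Hodge--Riemann form. These are reconciled by a Weil-operator computation: since the Weil operator $C$ is self-adjoint for the polarization and $\gamma$ commutes with the Weil operators (being a Hodge morphism), taking adjoints shows $C_X\gamma^\dagger=\gamma^\dagger C_\Sigma$, so $\gamma^\dagger$ is simultaneously the adjoint of $\gamma$ for the positive-definite forms $\langle u,v\rangle=(u,C\bar v)$. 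Consequently $M=\gamma\gamma^\dagger$ is self-adjoint and positive definite on $H^2(X)_{\rm tr}$ — surjectivity of $\gamma$ being exactly what rules out a kernel — hence invertible.

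It remains to make the inverse algebraic and assemble the Lefschetz cycle. Because $M$ is an invertible $\mathbb{Q}$-linear endomorphism of the finite-dimensional space $H^2(X,\mathbb{Q})_{\rm tr}$, Cayley--Hamilton expresses $M^{-1}$ as a $\mathbb{Q}$-polynomial $R=P(M)$ in $M$; and since $M$ is the action of an algebraic self-correspondence in ${\rm CH}^N(X\times X)$, so is every power of $M$ and hence $R$ (the constant term contributing a multiple of the diagonal). I would then set $\mathcal{Z}:=R\circ\gamma\circ[{}^tZ_\Sigma]^*$, an algebraic correspondence which a degree count places in ${\rm CH}^2(X\times X)$, and which satisfies $[\mathcal{Z}]^*\circ(h^{N-2}\cup)=R\circ(\gamma\gamma^\dagger)=R\circ M={\rm Id}$ on $H^2(X)_{\rm tr}$. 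By Remark \ref{remaintro} this identity on the transcendental part is precisely what is needed, so $X$ satisfies the Lefschetz standard conjecture in degree $2$.
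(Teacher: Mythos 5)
Your proof is correct. Note that the paper does not actually prove this proposition: it cites \cite{charles} for the statement, gives the one-line construction for the easy direction (restrict $\mathcal{Z}_{\rm lef}$ to $X\times\Sigma$, which is exactly your $\mathcal{Z}_{\rm lef}\circ{}^t\Gamma_j$), and dismisses the converse as ``more tricky''. Your forward direction therefore matches the paper's sketch, and your converse supplies the missing argument in essentially the form it takes in Charles's paper and in the classical Kleiman--Lieberman technique: the adjoint $\gamma^{\dagger}=[{}^tZ_\Sigma]^*\circ(h^{N-2}\cup)$ is algebraic because the Lefschetz operator is, the Weil-operator computation converts adjointness for the bilinear polarization into adjointness for the positive-definite Hodge--Riemann form so that $\gamma\circ\gamma^{\dagger}$ is definite on $H^2(X,\mathbb{Q})_{tr}$ (using that $H^2_{tr}$ sits inside the primitive part and that surjectivity of $\gamma$ kills the kernel of $\gamma^{\dagger}$), and Cayley--Hamilton makes the inverse algebraic; Remark \ref{remaintro} then handles the N\'eron--Severi part. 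The one slip is cosmetic: since $\gamma^{\dagger}$ goes from $H^2(X)$ to $H^2(\Sigma)$, the intertwining relation should read $C_\Sigma\gamma^{\dagger}=\gamma^{\dagger}C_X$, not the other way around; this does not affect the argument.
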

If the Lefschetz standard conjecture holds for a very ample line bundle $H_X$, one can indeed take for $\Sigma$  a smooth surface which is a complete intersection
of hypersurfaces in $|H_X|$ and for $Z_\Sigma$ the cycle $\mathcal{Z}_{\rm lef}$ restricted to $X\times \Sigma$. The other direction is more tricky.

\subsection{Hyper-K\"{a}hler manifolds admitting a Lagrangian fibration\label{secprooftheolag}}
Let $X$ be a smooth projective complex manifold of dimension $2n$ with a closed  holomorphic $2$-form $\sigma_X$, and  $\pi: X\rightarrow B$ be a  fibration whose fibers are isotropic  with respect to $\sigma_X$. In this section, we will consider
  the case of a Lagrangian fibration,  where the $2$-form is  nondegenerate and ${\rm dim}\,B=n$. In particular, the general fiber of $\pi$ is an abelian variety.
There is a family of $n-1$-cycles on $X$ parameterized by $X$ and defined as follows: let $H$ be an effective  ample divisor   on $X$ with restriction $H_b\subset X_b$ on the general fiber $X_b$ of $\pi$. Let $S\subset X$ be a multisection of $\pi$, of degree $D$ over $B$.
Then for $x\in X_b$, translation by ${\rm alb}_{X_b}(Dx-S_b)$ acts on $X_b$, hence on the set of effective  divisors of $X_b$. We denote
by $H_{b,x}\subset X_b$ the effective divisor obtained by applying this action to $H_b$. This is an effective  $n-1$-cycle in $X$ which is determined by $x\in X_b$, and $x\times H_{b,x}$ is an effective  $n-1$-cycle in $X\times X$, which is well defined for a general point $x\in X$. The Zariski closure in $X\times X$ of the union  of these
$n-1$-cycles provides a self-correspondence $\Gamma_\pi\in {\rm CH}^{n+1}(X\times X)$ with action
$$[\Gamma_\pi]^*(\alpha):={\rm pr}_{1*}([\Gamma_\pi]\cup {\rm pr}_2^*\alpha)$$ on the cohomology of $X$.
 Let  $h:=[H]\in H^2(X,\mathbb{Z})$,  and let $\widetilde{B}$ be a desingularization of $B$. If $X$ is hyper-K\"ahler, let $l$ be the Lagrangian class associated with the fibration $\pi$, that is, $l$ comes from the positive generator of
  ${\rm NS}({B})$ (see \cite{matsubasic}). We prove the following proposition.
\begin{prop}\label{prodebaseactGamma} Assuming that $H^0(\widetilde{B},\Omega_{\widetilde{B}}^2)=0$, one has \begin{eqnarray}
\label{eqaction} [\Gamma_\pi]^*(h^{n-1}\sigma_X)=c \sigma_X
\end{eqnarray}
for some nonzero rational  constant $c$.
Furthermore, if $X$ is hyper-K\"ahler,
\begin{eqnarray}
\label{eqaction2} [\Gamma_\pi]^*(h^{2n-2}\sigma_X)=c_1 l^{n-1} \sigma_X
\end{eqnarray}
 for some nonzero rational  constant  $c_1$.
\end{prop}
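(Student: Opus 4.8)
The plan is to use that $\Gamma_\pi$ is supported on the relative self-product $X\times_B X$, so that $[\Gamma_\pi]^*$ is a \emph{relative} correspondence over $B$: it preserves the Leray filtration of $\pi$ and acts on the graded pieces $E_\infty^{p,q}\subset H^p(\widetilde B,R^q\pi_*\mathbb{Q})$ as ${\rm id}_{H^p(\widetilde B)}\otimes[\gamma_b]^*$, where $\gamma_b\subset X_b\times X_b$ is the fibre of $\Gamma_\pi$ over a general $b$. Over the open locus where $\pi$ is a smooth abelian scheme this is immediate from Deligne's $E_2$-degeneration; I would extend it across the singular fibres via the decomposition theorem, a technical point I regard as one of the two real difficulties. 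Granting it, the whole proposition becomes a fibrewise statement on the abelian variety $X_b$ plus bookkeeping of Leray degrees, since $[\Gamma_\pi]^*$ shifts the fibre degree by $-(2n-2)$ and leaves the base degree fixed.

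The engine is the following elementary computation on $A:=X_b$. Writing $\ell=h_{|A}=c_1(H_b)$ and $\tau(x)={\rm alb}_A(Dx-S_b)$, the fibre $\gamma_b$ is the \emph{divisor} $F^{-1}(H_b)$ for $F:A\times A\to A$, $F(x,y)=y-\tau(x)$, so $[\gamma_b]=F^*\ell$. Translations act trivially on $H^1(A)$ and $[-D]^*=-D$ there, hence $F^*={\rm pr}_2^*-D\,{\rm pr}_1^*$ on $H^1(A)$ and, writing $\ell=\sum_i a_i\wedge b_i$ with $a_i,b_i\in H^1(A)$, one gets $F^*\ell={\rm pr}_2^*\ell+D^2{\rm pr}_1^*\ell-D\,M$ with $M$ the mixed term. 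In $[\gamma_b]^*(\ell^{n-1}\alpha)={\rm pr}_{1*}(F^*\ell\cup{\rm pr}_2^*(\ell^{n-1}\alpha))$ the two non-mixed terms die for degree reasons ($\ell^n\alpha\in H^{2n+1}(A)=0$ and $\int_A\ell^{n-1}\alpha=0$), while the projection formula turns $M$ into $\sum_i\bigl(a_i\int_A b_i\ell^{n-1}\alpha-b_i\int_A a_i\ell^{n-1}\alpha\bigr)$. Using $\int_A\beta\gamma\,\ell^{n-1}=C\,Q_\ell(\beta,\gamma)$ for the nondegenerate alternating form $Q_\ell$ attached to $\ell$ (and a symplectic basis), this collapses to $c\,\alpha$ with $c=DC\neq0$ independent of $b$. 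Thus
\[
[\gamma_b]^*(\ell^{n-1}\alpha)=c\,\alpha,\qquad \bigl([\gamma_b]^*\bigr)_{|H^{2n-1}(A)}=c\,(\ell^{n-1}\cup)^{-1}=:c\Lambda .
\]

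For (\ref{eqaction}): the isotropy of the fibres and $H^0(\widetilde B,\Omega^2_{\widetilde B})=0$ force $\sigma_X$ to be Leray-pure of degree $1$ (its $Gr^0$ is $\sigma_{X|X_b}=0$, its $Gr^2$ lies in $H^0(\widetilde B,\Omega^2_{\widetilde B})=0$). Decomposing $[\Gamma_\pi]^*(h^{n-1}\sigma_X)$: the $Gr^0$ part is $[\gamma_b]^*$ applied to $(h^{n-1}\sigma_X)_{|X_b}=h_b^{n-1}\cup\sigma_{X|X_b}=0$; the $Gr^2$ part lands in the $(2,0)$-summand of $H^2(\widetilde B)$, which vanishes; and the only surviving term is the $Gr^1$ part, whose source is $Gr^1(h^{n-1}\sigma_X)=h_b^{n-1}\cup Gr^1\sigma_X$, to which the engine assigns $c\,Gr^1\sigma_X$. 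Hence $[\Gamma_\pi]^*(h^{n-1}\sigma_X)$ and $c\sigma_X$ agree on all Leray-graded pieces; since $F^2H^2\cap H^{2,0}(X)=\pi^*H^{2,0}(\widetilde B)=0$ they are equal, with $c\neq0$.

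For (\ref{eqaction2}) the same mechanism applies, but two structural inputs are needed, and I expect the second to be the genuine obstacle. By Hwang's theorem $B\cong\mathbb P^n$ and by Matsushita's isomorphism $R^1\pi_*\mathcal O_X\cong\Omega^1_B$ one has $\mathcal H^{0,1}\cong\Omega^1_{\mathbb P^n}$, $\mathcal H^{1,0}\cong T_{\mathbb P^n}$; since $H^1(\mathbb P^n,T_{\mathbb P^n})=0$ and $H^0(\mathbb P^n,\Omega^1\otimes\Omega^1)=0$, the $(1,1)$-part of $E^{1,1}=H^1(B,R^1\pi_*\mathbb C)$ vanishes, so $Gr^1_{\rm Leray}h=0$ for every Hodge class and $h\equiv h_b+\lambda l$ on $Gr^\bullet_{\rm Leray}$ with $\lambda\neq0$ (as $\int_Xh^{2n}>0$). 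The output $[\Gamma_\pi]^*(h^{2n-2}\sigma_X)$ a priori has Leray degrees $2n-2,2n-1,2n$: the top one vanishes by Hodge type ($H^{2n}(\mathbb P^n)$ is pure $(n,n)\neq(n+1,n-1)$), and the degree-$(2n-2)$ one vanishes because its source $Gr^{2n-2}(h^{2n-2}\sigma_X)\subset E^{2n-2,2n}$ would be of type $(2n,2n-2)$, i.e.\ in $H^{n,n-2}(B)=0$ (Serre duality from $H^{2,0}(B)=0$). For the surviving degree-$(2n-1)$ piece, $Gr^1h=0$ gives $Gr^{2n-2}(h^{2n-2})=\binom{2n-2}{n-1}\lambda^{n-1}\,l^{n-1}\cup h_b^{n-1}$, so its source is $\binom{2n-2}{n-1}\lambda^{n-1}\,l^{n-1}\cup h_b^{n-1}\cup Gr^1\sigma_X$; applying ${\rm id}\otimes c\Lambda$ with $\Lambda(h_b^{n-1}\cup-)={\rm id}$ yields $c_1\,Gr^{2n-1}(l^{n-1}\sigma_X)$, $c_1=c\binom{2n-2}{n-1}\lambda^{n-1}\neq0$. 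Matching graded pieces and $F^{2n}H^{2n}\cap H^{n+1,n-1}(X)=0$ give $[\Gamma_\pi]^*(h^{2n-2}\sigma_X)=c_1\,l^{n-1}\sigma_X$. The hard part is therefore not the abelian-variety computation but (a) legitimising the $Gr_{\rm Leray}$-action of the relative correspondence across the singular fibres and (b) the vanishing $Gr^1h=0$, which rests on the deep structure theorems of Hwang and Matsushita.
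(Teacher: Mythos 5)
Your fibrewise ``engine'' is correct and is exactly the content of what the paper dismisses as ``a standard fact about duality in abelian varieties'': writing $\gamma_b=F^{-1}(H_b)$ with $F(x,y)=y-\tau(x)$, expanding $F^*\ell$ into pure and mixed K\"unneth terms, and collapsing the mixed term via the symplectic form $Q_\ell$ is precisely the computation the paper leaves implicit, and you carry it out in more detail than the source does. The divergence --- and the problem --- is in how you globalize it. You propose to match Leray-graded pieces of the \emph{global} spectral sequence of $\pi$, which forces you to (i) justify that the relative correspondence acts on $Gr^{\bullet}_L H^*(X)$ as ${\rm id}\otimes[\gamma_b]^*$ across the singular fibres, and (ii) for (\ref{eqaction2}), establish $Gr^1_L h=0$ via Hwang's theorem $B\cong\mathbb{P}^n$ and Matsushita's $R^1\pi_*\mathcal{O}_X\cong\Omega^1_B$. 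You flag both as ``the two real difficulties'' and resolve neither, so as written these are genuine gaps: (i) needs the decomposition theorem together with a compatibility of the correspondence action with the perverse/Leray splitting that you do not supply, and (ii) imports deep structure theorems stated for smooth $B$, whereas the proposition allows $B$ singular (hence the appearance of $\widetilde{B}$).

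The paper's proof avoids both obstacles. For (\ref{eqaction}) it never touches the singular fibres: since the kernel of $H^2(X,\mathbb{C})\to H^2(X_U,\mathbb{C})$ is of Hodge coniveau $\geq 1$ (Deligne), it suffices to prove the identity for the holomorphic form $\sigma_X$ restricted to $X_U=\pi^{-1}(U)$ over the smooth locus $U$, and then, using $H^0(\widetilde{B},\Omega^2_{\widetilde{B}})=0$, to prove it in the quotient $\Omega_U\otimes R^0\pi_*\Omega_{X_U/U}$, where it becomes exactly your fibrewise duality statement --- no decomposition theorem, no $E_\infty$ bookkeeping. For (\ref{eqaction2}) the paper does not need $Gr^1_Lh=0$ at all: it uses only that $\Gamma_\pi\subset X\times_BX$ implies $[\Gamma_\pi]^*(l^k\alpha)=l^k[\Gamma_\pi]^*(\alpha)$ (projection formula), together with the Beauville--Fujiki consequence $l^n\sigma_X=0={l'}^n\sigma_X$ for $h=al+bl'$ with $q(l')=0$, which gives $h^{2n-2}\sigma_X=c_2\,l^{n-1}{l'}^{n-1}\sigma_X$ and reduces (\ref{eqaction2}) formally to (\ref{eqaction}). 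I recommend you replace your step (i) by the restriction-to-$X_U$ argument and your step (ii) by the Beauville--Fujiki computation; your abelian-variety calculation can then be kept verbatim as the local core of the proof.
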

\begin{proof} Let us   first show that (\ref{eqaction}) implies (\ref{eqaction2}). The correspondence $\Gamma_\pi$ is a relative correspondence, which means that it is contained in $X\times_B X$. It then follows from the projection formula that
\begin{eqnarray}
\label{eqactionutile0} [\Gamma_\pi]^*(l^k\alpha)=l^k[\Gamma_\pi]^*(\alpha)
\end{eqnarray}
 for any $\alpha\in H^*(X)$. We can write
\begin{eqnarray}\label{eqneweq117}h= a l+b l',
\end{eqnarray} where $q(l')=0$, by choosing a standard  hyperbolic basis of $\langle l,h\rangle\subset H^2(X,\mathbb{Q})$ equipped with the Beauville-Bogomolov form. We then have
\begin{eqnarray}
\label{eqactionutile}
h^{2n-2}\sigma_X= c_2l^{n-1}{l'}^{n-1}\sigma_X\,\,{\rm in}\,\,H^{4n-2}(X,\mathbb{C})
\end{eqnarray}
This  follows indeed from the Beauville-Fujiki relation
$$\alpha^{n+1}=0\,\,{\rm in }\,\,H^{2n+2}(X,\mathbb{C})$$
 when $q(\alpha)=0$, which by differentiation gives
$$\alpha^{n}\sigma=0\,\,{\rm in} \,\,H^{2n+2}(X,\mathbb{C})\,\, {\rm when} \,\,q(\alpha)=0,\,q(\sigma)=0,\,q(\alpha,\sigma)=0.$$
Applying this last equality to $\alpha=l$ or $l'$ and $\sigma=\sigma_X$, we get that
$l^n\sigma_X=0$  and $(l')^n\sigma_X=0$, which, using (\ref{eqneweq117}),  implies (\ref{eqactionutile}) with $c_2=\binom{2n-2}{n-1}a^{n-1}b^{n-1}$.
Combining (\ref{eqactionutile}) and (\ref{eqactionutile0}), (\ref{eqaction}) implies (\ref{eqaction2}) since, by  (\ref{eqactionutile0}), we have $[\Gamma_\pi]^*(h^{n-1}\sigma_X)=[\Gamma_\pi]^*({l'}^{n-1}\sigma_X)$.

We turn to the proof of (\ref{eqaction}). Let $U\subset B$ be the open set where $B$ is smooth and over which $\pi$ is smooth, and $X_U:=\pi^{-1}(U)$. It is a general fact due to Deligne \cite{deligne} that, if $V$ is a dense Zariski open set of a smooth projective variety $X$,  the restriction map $H^{2}(X,\mathbb{C})\rightarrow H^2(V,\mathbb{C})$ is injective  on the space
$H^{2,0}(X)$. (More precisely, Deligne shows that the kernel of the restriction map is a Hodge substructure of Hodge coniveau $\geq 1$.)  It thus suffices to prove (\ref{eqaction}) after restriction to the Zariski open set $X_U$. It also suffices to prove
the equality in $H^0(X_U,\Omega_{X_U}^2)$ because
the restriction map
$H^{2,0}(X)\rightarrow H^2(X_U)$ factors through the restriction map
$$H^{0}(X,\Omega_X^2)=H^{0}(X,\Omega_X^2)^{\rm closed}\rightarrow H^0(X_U,\Omega_{X_U}^2)^{\rm closed}.$$
We now use the fact that the fibration $\pi$ is Lagrangian, which means that, on $X_U$,
\begin{eqnarray}
\label{eqpoursigmaleray} \sigma_X\in \Gamma(\pi^*\Omega_U\wedge \Omega_{X_U}),
\end{eqnarray}

We next observe that, using the condition $H^0(\widetilde{B},\Omega_{\widetilde{B}}^2)=0$, in order to prove the equality (\ref{eqaction}) of $(2,0)$-forms on $X_U$, it suffices to prove
the corresponding  equality in the quotient $\Omega_U\otimes R^0\pi_*\Omega_{X_U/U}$ of $R^0\pi_*(\pi^*\Omega_U\wedge \Omega_{X_U})$.
The computation is now local and fiberwise. Indeed, project $\sigma_X$ to a section
$\tilde{\sigma}_X$ of $\pi^*\Omega_U\otimes \Omega_{X_U/U}$, and $h$ to a section
$\tilde{h}$ of
$R^1\pi_*\Omega_{X_U/U}$. Then the relative correspondence $\Gamma_{\pi,rel}\in {\rm CH}^1(X_U\times_U X_U)$ induces a morphism

\begin{eqnarray}
\label{eqgammarel} \Gamma_{\pi,rel}^*\circ \tilde{h}^{n-1}: R^0\pi_*\Omega_{X_U/U}\stackrel{\tilde{h}^{n-1}}{\rightarrow} R^{n-1}\pi_*\Omega_{X_U/U}^n\stackrel{\Gamma_{rel}^*}{\rightarrow}  R^0\pi_*\Omega_{X_U/U},
\end{eqnarray}
which tensored by $\Omega_U$  will induce the sheaf-theoretic  version
\begin{eqnarray}\label{eqlocalversion}([\Gamma_\pi]^*\circ h^{n-1})_{\rm loc}:\Omega_U\otimes R^0\pi_*\Omega_{X_U/U}\rightarrow \Omega_U\otimes R^0\pi_*\Omega_{X_U/U}
\end{eqnarray}
of the  morphism  $[\Gamma_\pi]^*\circ h^{n-1}$.
It thus only remains to prove that the morphism $\Gamma_{\pi,rel}^*\circ \tilde{h}^{n-1}$ of (\ref{eqgammarel}) is a nonzero rational multiple of the identity. This is a fiberwise statement  which is a standard fact about duality in abelian varieties.
\end{proof}
\begin{rema}\label{rematardivedu7sept} {\rm If we drop the assumption that $H^0(\widetilde{B},\Omega_{\widetilde{B}}^2)=0$, the argument above still proves that  (\ref{eqaction}) holds modulo $\pi^*H^0(\widetilde{B},\Omega_{\widetilde{B}}^2)$. This will be used in next section.}
\end{rema}
\begin{proof}[Proof of Theorem \ref{theobasic}] Let $X$ be a projective hyper-K\"ahler manifold with $\rho(X)=2$ admitting a Lagrangian fibration $\pi:X\rightarrow B$ with Lagrangian class $l$. As  $\rho(X)=2$, $X$ has two isotropic  classes in ${\rm NS}(X)$ (up to a scalar), which we call $l$ and $l'$.
As a consequence of  fundamental results of Huybrechts in \cite{huybrechts}, Riess proved the following:
\begin{theo} \cite[theorem 3.3]{riess}\label{theoriess}  Let $l'$ be an isotropic class on a projective hyper-K\"ahler manifold $X$ of dimension $2n$, with $q(l',\omega)>0$ for any K\"ahler form on $X$. Then there exists
a cycle $R \in{\rm CH}^{2n}(X\times X)$ with inverse $R^{-1}\in {\rm CH}^{2n}(X\times X)$ such that $R^*$ acts as an automorphism of ${\rm CH}(X)$ preserving
  the intersection product, the action of $R^*$ on $H^2(X,\mathbb{Q})$ preserves the Beauville-Bogomolov form  $q_X$, and
$R^*(l')$ belongs to the boundary of the birational K\"ahler cone of $X$.
\end{theo}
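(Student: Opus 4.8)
The plan is to build $R$ as a composition of elementary invertible self-correspondences, following Huybrechts's description of the birational K\"ahler cone. Recall that the positive cone $\mathcal{C}_X\subset H^{1,1}(X,\mathbb{R})$ is the connected component of $\{q>0\}$ containing the K\"ahler classes, that the birational K\"ahler cone $\mathcal{BK}_X$ is the union of the pullbacks $\psi^*\mathcal{K}_{X'}$ of the K\"ahler cones of all hyper-K\"ahler birational models $\psi\colon X\dashrightarrow X'$, and that by \cite{huybrechts} the closure $\overline{\mathcal{BK}_X}$ is a fundamental domain for the group $W$ generated by the reflections $s_\delta(\alpha)=\alpha-2\frac{q(\alpha,\delta)}{q(\delta)}\delta$ in the classes $\delta$ of prime exceptional divisors (which satisfy $q(\delta)<0$). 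The hypotheses $q(l')=0$ and $q(l',\omega)>0$ say exactly that $l'$ lies on the positive part of the boundary $\partial\mathcal{C}_X$. The first step is then purely cone-theoretic: since the $W$-translates of $\overline{\mathcal{BK}_X}$ cover $\overline{\mathcal{C}_X}$, there is $w\in W$ with $w(l')\in\overline{\mathcal{BK}_X}$; as $w$ is a $q$-isometry and $l'$ is isotropic, $w(l')$ is again isotropic, hence cannot lie in the open cone $\mathcal{BK}_X$ (where $q>0$) and must lie on $\partial\mathcal{BK}_X$. It therefore suffices to realize each reflection occurring in a reduced expression $w=s_{\delta_1}\cdots s_{\delta_k}$ by a self-correspondence with the required structural properties, and to take for $R$ the corresponding composition.

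The realization of the building blocks is the technical core, and there are two types. First, a birational map $\psi\colon X\dashrightarrow X'$ between hyper-K\"ahler manifolds is an isomorphism in codimension one, so the closure $\overline{\Gamma_\psi}\subset X\times X'$ of its graph defines a correspondence whose action on $H^2$ is the canonical Hodge isometry of \cite{huybrechts}; the point, which is Riess's main input, is that $\overline{\Gamma_\psi}^*\colon \mathrm{CH}(X')\to \mathrm{CH}(X)$ is moreover an isomorphism of graded rings preserving the intersection product. This is proved by reducing, via the structure of birational transformations of hyper-K\"ahler manifolds, to the case of a single Mukai flop, where the local geometry (replacing a $\mathbb{P}^n$-bundle by its dual) makes the graph closure explicit and allows a direct check that intersections are preserved. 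Second, a reflection $s_\delta$ in a prime exceptional class $\delta=[D]$ must be realized by an algebraic self-correspondence of $X$ inducing $s_\delta$ on $H^2$; this is where the special geometry of prime exceptional divisors enters, since $D$ is uniruled and the birational contraction and flop it determines produce the correspondence, arranged so that the output model is re-identified with $X$. Each such block is invertible, with inverse the transposed correspondence (a reflection being an involution, $s_\delta^2=\mathrm{id}$), is a ring isomorphism preserving the intersection product, and acts as a $q$-isometry on $H^2$.

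Granting these realizations, the theorem assembles formally. Define $R\in\mathrm{CH}^{2n}(X\times X)$ as the composition of the correspondences realizing $s_{\delta_1},\dots,s_{\delta_k}$, arranged so that it defines a self-correspondence of $X$; its inverse $R^{-1}$ is the reverse composition built from the transposed blocks, so $R$ is invertible. Since composition of correspondences is compatible with the induced actions on Chow groups and on cohomology, $R^*$ is an automorphism of $\mathrm{CH}(X)$ preserving the intersection product and a $q$-isometry of $H^2(X,\mathbb{Q})$, being a composition of such. Finally $R^*(l')=w(l')\in\partial\mathcal{BK}_X$ by the first step, which is the desired conclusion.

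The main obstacle is the second, geometric realization step, and within it the assertion that graph closures of birational transformations of hyper-K\"ahler manifolds act as ring isomorphisms preserving intersection products: this fails for general birational maps and uses in an essential way the very special (Mukai-flop) local structure of birational transformations of hyper-K\"ahler manifolds, together with the positivity and uniruledness of prime exceptional divisors established by Boucksom and Druel. By contrast, the cone-theoretic reduction and the purely formal assembly of $R$ from its building blocks are routine once Huybrechts's fundamental-domain statement and Riess's Chow-ring comparison are in hand.
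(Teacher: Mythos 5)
You should note at the outset that the paper contains no proof of Theorem \ref{theoriess}: it is imported verbatim from Riess \cite{riess} and used as a black box in the proof of Theorem \ref{theobasic}, so your attempt can only be measured against Riess's original argument. At the level of architecture you are close to it: a cone-theoretic step moving $l'$ into $\partial\overline{\mathcal{BK}}_X$ by the reflection group $W_{\rm Exc}$ generated by prime exceptional classes, followed by a realization of the relevant isometry by an invertible cycle acting multiplicatively on ${\rm CH}(X)$. Two smaller corrections first: the fundamental-domain statement for $W_{\rm Exc}$ acting on the positive cone is due to Markman (building on results of Boucksom and Druel), not to \cite{huybrechts}, whose inputs here are rather the duality description of $\overline{\mathcal{BK}}_X$ and the non-separatedness theorem recalled below; and since the chamber covering is a statement about the open positive cone, your assertion that the $W$-translates of $\overline{\mathcal{BK}}_X$ cover the isotropic boundary point $l'$ needs a small extra descent/limiting argument that you gloss over.

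The genuine gaps are in your realization step, which you rightly call the core. First, the claim that the closure $\overline{\Gamma_\psi}$ of the graph of a birational map induces a ring isomorphism on ${\rm CH}$ is false as stated: already for a single Mukai flop the graph closure alone is not compatible with the ring structure, and the correct cycle is the limit of graphs of isomorphisms between deformations, which differs from $\overline{\Gamma_\psi}$ by components supported on products of the exceptional loci --- this is exactly why the present paper speaks of cycles ``constructed by adding corrections to the graphs of $\psi$ and $\psi^{-1}$'' in Case 1 of Theorem \ref{theobasic}. Second, your proposed proof of multiplicativity ``by reducing to a single Mukai flop'' is unavailable: the factorization of birational maps of hyper-K\"ahler manifolds into Mukai flops is known only in dimension $4$ (Wierzba--Wi\'sniewski) and is open in general; Riess's actual mechanism is entirely different, namely Huybrechts's theorem that birational hyper-K\"ahler manifolds are non-separated points of the moduli space, i.e.\ admit deformations $\mathcal{X},\mathcal{X}'\rightarrow\Delta$ isomorphic over $\Delta^*$, combined with Fulton's specialization homomorphism on Chow groups, which is a ring homomorphism and yields both the multiplicativity and the invertibility of the limit-cycle action. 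Third, and most seriously, a reflection $s_\delta$ is not realized by ``the contraction and flop it determines, arranged so that the output model is re-identified with $X$'': there is no reason the flopped model is isomorphic to $X$, and any pullback by a birational self-map preserves the movable cone, whereas $s_\delta$ carries it to an adjacent chamber, so no birational self-map can induce $s_\delta$. The reflections are monodromy operators (Markman), and the cycle realizing $w\in W_{\rm Exc}$ is obtained via the global Torelli theorem: the marked pairs $(X,\eta)$ and $(X,\eta\circ w)$ have the same period point in the same moduli component (as $w$ is a Hodge isometry in the monodromy group), hence are non-separated, and Huybrechts's limit cycle for this pair is a self-correspondence of $X$ inducing $w$ on $H^2$, with its Chow-theoretic properties again supplied by specialization. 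Note finally that invertibility of $R$ on ${\rm CH}(X)$ does not follow, as you suggest, from $s_\delta^2={\rm id}$ on cohomology --- a cycle inducing an involution on $H^2$ need not be invertible as a correspondence --- but comes from this construction, the inverse family furnishing the inverse cycle.
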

We are now faced with two possibilities: either

(a) $R^*(l')$ is proportional to $l'$, hence $l'$
belongs to the boundary of the birational K\"ahler cone of $X$, or

 (b)   $R^*(l')$ is proportional to $l$.

 Let us examine these possibilities separately
in the two cases \ref{condSYZ} and \ref{condtheta} of Theorem \ref{theobasic}.

\vspace{0.5cm}

{\bf  Case \ref{condSYZ}}: {\it $X$ satisfies the SYZ conjecture \ref{conjlag}.}   We assume  that we are in situation (a);
the situation (b) is treated below. The class $l'$, being in  the boundary of the birational K\"ahler cone of $X$, provides the Lagrangian class $l'_{X'}=(\psi^{-1})^* l'$ for  some  Lagrangian fibration on some hyper-K\"ahler manifold $X'$ which is birational to $X$ via a rational map
$\psi: X\dashrightarrow X'$. We apply the previous construction to $X'$ and $l'$, which provides a correspondence
$\Gamma_{\pi'}\in{\rm CH}^{n+1}(X'\times X')$ such that
\begin{eqnarray}
\label{eqGammalprime} [\Gamma_{\pi'}]^*({h'}^{2n-2}\sigma_{X'})=c'_1 (l'_{X'})^{n-1}\sigma_{X'},
\end{eqnarray}
where $h'=(\psi^{-1})^* h$.

By the work of Huybrechts \cite{huybrechts}, there exist  cycles $R_\psi\in {\rm CH}^{2n}(X\times X'),\,R_\psi^{-1}\in {\rm CH}^{2n}(X'\times X)$ which act as $\psi^*$, $(\psi^{-1})^*$ on degree $2$ cohomology   and  are constructed by adding    corrections
to the graphs of $\psi$ and $\psi^{-1}$, and whose action on cohomology is compatible with cup-product. Let

$$\Gamma_{\pi',X}:=R_\psi^{-1}\circ \Gamma_{\pi'}\circ R_\psi\in {\rm CH}^{n+1}(X\times X).$$
Then (\ref{eqGammalprime}), together with the compatibility of the action of $[R_\psi]$ with cup-product which gives $[R_\psi^{-1}]^*({h}^{2n-2}\sigma_{X})={h'}^{2n-2}\sigma_{X'},\,[R_\psi]^*((l'_{X'})^{n-1}\sigma_{X'})={l'}^{n-1}\sigma_X$,
shows that

\begin{eqnarray}
\label{eqGammalprimeprime} [\Gamma_{\pi',X}]^*(h^{2n-2}\sigma_{X})=c'_1 {l'}^{n-1}\sigma_{X},
\end{eqnarray}

Let now $$\Gamma_{\rm lef}:=\Gamma_{\pi',X}\circ {^t\Gamma_l} \in {\rm CH}^2(X\times X).$$
The proof of the Lefschetz standard conjecture in this case concludes, using Remark \ref{remaintro}, with the following lemma.
\begin{lemm} One has $[\Gamma_{\rm lef}]^*(h^{2n-2}\sigma_X)=\mu \sigma_X$ with $\mu\not=0$.
\label{lelefcal}
\end{lemm}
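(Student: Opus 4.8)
The plan is to compute $[\Gamma_{\rm lef}]^*(h^{2n-2}\sigma_X)$ by unwinding the definition $\Gamma_{\rm lef}=\Gamma_{\pi',X}\circ {}^t\Gamma_l$ and applying the already-established identity (\ref{eqGammalprimeprime}). First I would recall that $\Gamma_l$ denotes (up to the standard conventions) the correspondence whose action multiplies by the Lagrangian class $l$, so that its transpose ${}^t\Gamma_l$ acts on cohomology by cup-product with a power of $l$; concretely, applying $[{}^t\Gamma_l]^*$ to $h^{2n-2}\sigma_X$ should produce, up to a nonzero scalar, the class $l^{n-1}h^{2n-2}\sigma_X$ or, after using the Beauville--Fujiki vanishing $l^n\sigma_X=0$ together with the decomposition $h=al+bl'$ from (\ref{eqneweq117}), a nonzero multiple of $h^{2n-2}\sigma_X$ living in the correct Hodge-theoretic slot. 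The key point is that composition of correspondences realizes composition of the induced maps, so that
\begin{eqnarray*}
[\Gamma_{\rm lef}]^*(h^{2n-2}\sigma_X)=[\Gamma_{\pi',X}]^*\bigl([{}^t\Gamma_l]^*(h^{2n-2}\sigma_X)\bigr),
\end{eqnarray*}
and I would feed the output of the first factor into (\ref{eqGammalprimeprime}).

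The main structural step is to track the Beauville--Fujiki relations carefully. Since $q(l)=q(l')=0$ and $q(l,\sigma_X)=q(l',\sigma_X)=0$, the differentiated relation in the proof of Proposition \ref{prodebaseactGamma} gives $l^n\sigma_X=0$ and $(l')^n\sigma_X=0$ in $H^{4n-2}(X,\mathbb{C})$; meanwhile $l^{n-1}(l')^{n-1}\sigma_X$ is nonzero because it pairs nontrivially under the Beauville--Bogomolov form (this is exactly the content of (\ref{eqactionutile}), where $h^{2n-2}\sigma_X=c_2 l^{n-1}(l')^{n-1}\sigma_X$ with $c_2\neq 0$). Thus after applying $[{}^t\Gamma_l]^*$, which I expect to introduce one more factor of $l$, the only surviving term of $l\cdot h^{2n-2}\sigma_X$ is the piece proportional to $l^{n-1}(l')^{n-1}\sigma_X$ that still carries a nonzero coefficient; all terms with an $n$-th power of $l$ die. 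Applying then $[\Gamma_{\pi',X}]^*$ and invoking (\ref{eqGammalprimeprime}) — together with the relative-correspondence identity (\ref{eqactionutile0}) to pull the powers of $l$ through — yields a nonzero multiple of ${l'}^{n-1}\sigma_X$, which I then re-expand back into a nonzero multiple of $\sigma_X$ using that $[\Gamma_{\pi',X}]^*$ lands in $H^{2,0}(X)=\mathbb{C}\sigma_X$.

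The part I expect to be genuinely delicate is verifying that none of the scalar constants collapse to zero along the way: each application of a Beauville--Fujiki relation and each composition reshuffles the class among the various monomials $l^a (l')^b \sigma_X$, and I must confirm that the single surviving monomial feeding into (\ref{eqGammalprimeprime}) has a nonzero coefficient. This amounts to keeping the binomial coefficient $\binom{2n-2}{n-1}a^{n-1}b^{n-1}$ (with $a,b\neq 0$ since $h$ is ample and genuinely mixes $l$ and $l'$) together with the nonzero constant $c_1'$ from (\ref{eqGammalprimeprime}) and the nonzero multiple produced by $[{}^t\Gamma_l]^*$. Since $H^{2,0}(X)$ is one-dimensional, the composite map $[\Gamma_{\rm lef}]^*$ restricted to $H^{2,0}(X)$ is multiplication by a single scalar $\mu$, and the argument above exhibits $\mu$ as a product of explicitly nonzero factors, giving $\mu\neq 0$ as required. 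By Remark \ref{remaintro}, this suffices to conclude the Lefschetz standard conjecture in degree $2$.
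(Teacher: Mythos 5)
Your proposal starts from a misreading of the correspondence ${}^t\Gamma_l$, and this derails both the computation and the crucial nonvanishing argument. In the paper, $\Gamma_l$ is not the operator of cup-product with (a power of) $l$: it is the correspondence $\Gamma_\pi\in{\rm CH}^{n+1}(X\times X)$ constructed at the beginning of Section \ref{secprooftheolag} from the translated divisors $H_{b,x}$ in the fibers of the Lagrangian fibration $\pi$ whose Lagrangian class is $l$ (just as $\Gamma_{l'}$ later denotes $R^{-1}\circ\Gamma_\pi\circ R$); this is also forced by the codimension count that makes $\Gamma_{\rm lef}$ land in ${\rm CH}^2(X\times X)$. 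With your reading the degrees do not close up: $l^{n-1}h^{2n-2}\sigma_X$ sits in degree $6n-4$, which exceeds the top degree $4n$ as soon as $n>2$, and the output of your composite never reaches $H^2(X,\mathbb{C})$ --- note that by (\ref{eqGammalprimeprime}) the class $[\Gamma_{\pi',X}]^*(h^{2n-2}\sigma_X)=c'_1{l'}^{n-1}\sigma_X$ lies in $H^{n+1,n-1}(X)\subset H^{2n}(X,\mathbb{C})$, so it is emphatically not a multiple of $\sigma_X$. The intended factorization is $[\Gamma_{\rm lef}]^*=[{}^t\Gamma_\pi]^*\circ[\Gamma_{\pi',X}]^*$: one first applies (\ref{eqGammalprimeprime}), and then $[{}^t\Gamma_\pi]^*$, which maps $H^{2n}(X,\mathbb{Q})$ to $H^{2}(X,\mathbb{Q})$ and, by Hodge type, sends ${l'}^{n-1}\sigma_X$ into $\mathbb{C}\sigma_X$.

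The second, and more serious, gap is the justification of $\mu\neq0$. Even after reducing to the nonvanishing of $[{}^t\Gamma_\pi]^*({l'}^{n-1}\sigma_X)$, one cannot conclude by saying that $\mu$ is ``a product of explicitly nonzero factors'': a composite of correspondences, each acting by a nonzero scalar on some class, can perfectly well annihilate the particular class fed into it, since a priori $[{}^t\Gamma_\pi]^*$ could kill ${l'}^{n-1}\sigma_X$. The paper's mechanism is a duality argument: pair $[{}^t\Gamma_\pi]^*({l'}^{n-1}\sigma_X)$ with $h^{2n-2}\overline{\sigma_X}$, use the adjunction $\langle[{}^t\Gamma_\pi]^*\alpha,\beta\rangle_X=\langle\alpha,[\Gamma_\pi]^*\beta\rangle_X$ to move the correspondence to the other argument, apply Proposition \ref{prodebaseactGamma} to $\overline{\sigma_X}$ to obtain $c_1\langle{l'}^{n-1}\sigma_X,l^{n-1}\overline{\sigma_X}\rangle_X=c_1\int_X(ll')^{n-1}\sigma_X\overline{\sigma_X}$, and finally combine (\ref{eqactionutile}) with the second Hodge--Riemann bilinear relations to see that this equals a nonzero multiple of $\int_Xh^{2n-2}\sigma_X\overline{\sigma_X}\neq0$. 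None of these steps --- the transposition/adjunction, the application of Proposition \ref{prodebaseactGamma} to the conjugate form, the Hodge--Riemann positivity --- appears in your proposal, and they are exactly where the content of the lemma lies. The Beauville--Fujiki bookkeeping you describe ($l^n\sigma_X=0$, $(l')^n\sigma_X=0$, $h^{2n-2}\sigma_X=c_2l^{n-1}{l'}^{n-1}\sigma_X$ with $c_2\neq0$) is correct, but it enters only as an input to this final pairing computation, not as a substitute for it.
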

\begin{proof} One has $[\Gamma_{\rm lef}]^*(h^{2n-2}\sigma_X)=[{^t\Gamma}_\pi]^*([\Gamma_{\pi',X}]^*(h^{2n-2}\sigma_{X}))$ and by (\ref{eqGammalprimeprime}), this is equal to $c'_1[{^t\Gamma}_\pi]^*( {l'}^{n-1}\sigma_{X})$, so we have to prove that
$[{^t\Gamma}_\pi]^*( {l'}^{n-1}\sigma_{X})\not=0$. It clearly suffices to prove that
\begin{eqnarray}
\label{eqnonul} \langle [{^t\Gamma}_\pi]^*( {l'}^{n-1}\sigma_{X}),h^{2n-2}\overline{\sigma_X}\rangle_X\not=0.\end{eqnarray}
But $\langle [{^t\Gamma}_\pi]^*( {l'}^{n-1}\sigma_{X}),h^{2n-2}\overline{\sigma_X}\rangle_X=\langle {l'}^{n-1}\sigma_{X},[\Gamma_\pi]^*(h^{2n-2}\overline{\sigma_X})\rangle_X$, which by Proposition \ref{prodebaseactGamma}, equals
$c_1\langle {l'}^{n-1}\sigma_{X},l^{n-1}\overline{\sigma_X}\rangle_X$.
Thus the result follows from $\int_X(ll')^{n-1} \sigma_{X}\overline{\sigma_X}\not=0$, which follows from the second Hodge-Riemann relations, since, by (\ref{eqactionutile}), this is a nonzero multiple of   $\int_Xh^{2n-2} \sigma_{X}\overline{\sigma_X}$.
\end{proof}

\vspace{0.5cm}

{\bf Case \ref{condtheta}}: {\it   There is an effective divisor $\Theta$ on $X$ which satisfies $q(\theta)<0$.} In this case, we claim that $l'$ does not
belong to the boundary of the birational K\"ahler cone of $X$ (see also \cite{sacca}, where a similar situation is studied for the LSV variety constructed in \cite{lsv}). Indeed, $l'= a \theta+bl$ with
$$q(l')=0,\,\,q(l',l)=aq(\theta,l)>0,\,\,q(l,\theta)>0,$$
 and this implies that $a>0$ and
 $$a^2 q(\theta)+2abq(l,\theta)=0.$$
 As $q(\theta)<0$, we find that $b>0$ and
 thus $$q(\theta, l')=aq(\theta)+bq(\theta,l)=-bq(\theta,l)<0. $$
 As $\theta$  is the class of an effective divisor, this implies the claim.
 It follows from the claim that we must have $R^*l'=l$, that is, we are in situation (b).

 \vspace{0.5cm}

 We can  now in both cases assume that we are in situation (b) and conclude the proof of Theorem \ref{theobasic}. We  have $l'=(R^{-1})^*l$ and we know that $l$ is a Lagrangian class on $X$ and $R^{-1}\in{\rm CH}^{2n}(X\times X)$ is a correspondence whose action in cohomology preserves the cup-product and is the identity on $H^{2,0}(X)$ (see Theorem \ref{theoriess}). Let $\Gamma_\pi\in{\rm CH}^{n+1}(X\times X)$ be constructed as before and let
 $$\Gamma_{l'}:=R^{-1}\circ \Gamma_\pi\circ R.$$
 Then, by Proposition \ref{prodebaseactGamma}, we have
 $[\Gamma_{\pi}]^*(h^{2n-2}\sigma_{X})=c_1 l^{n-1}\sigma_{X}$, hence,  if $h'=(R^{-1})^*h$, we have, as in (\ref{eqGammalprime})

\begin{eqnarray}
\label{eqGammalprimesituationb} [\Gamma_{l'}]^*({h'}^{2n-2}\sigma_{X})=c_1 {l'}^{n-1}\sigma_{X}.
\end{eqnarray}
We can thus  apply the same argument as before, replacing $\Gamma_{\pi',X}$ by $\Gamma_{l'}$ and defining
 $$\Gamma_{\rm lef}:=\Gamma_{l'}\circ {^t\Gamma_l} \in {\rm CH}^2(X\times X).$$
 The analog of Lemma \ref{lelefcal} is proved as before, and this establishes the Lefschetz standard conjecture in this case.
\end{proof}
In the case of dimension $4$, we have the following alternative proof of Theorem \ref{theobasic} giving a slightly different statement.
\begin{prop} Let $X$ be a hyper-K\"ahler fourfold admitting a Lagrangian fibration $\pi: X\rightarrow B$. Assume  there exists a
uniruled divisor $\Theta\subset X$ such that $q(\theta,l)\not=0$. Then
$X$ satisfies the Lefschetz standard conjecture in degree $2$.
\end{prop}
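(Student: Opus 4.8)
The plan is to verify the criterion of Proposition \ref{procharles}: I will produce a smooth projective surface $W$ together with a codimension $2$ cycle $Z_W\in {\rm CH}^2(W\times X)$ such that $[Z_W]^*\colon H^{2,0}(W)\rightarrow H^{2,0}(X)$ is surjective. The surface $W$ will be the base of the maximal rationally connected (MRC) fibration of the uniruled divisor $\Theta$, and $Z_W$ will be obtained by composing the incidence correspondence attached to $\Theta$ with the transpose of the correspondence $\Gamma_\pi$ of Proposition \ref{prodebaseactGamma}. The point of composing with ${}^t\Gamma_\pi$ is that the divisor alone only produces the class $\theta\sigma_X\in H^{3,1}(X)$, and ${}^t\Gamma_\pi$ is exactly what brings this class back into $H^{2,0}(X)$ while keeping the total correspondence of codimension $2$ (this is where dimension $4$ is used, through the count $3+3-\dim X=2$).

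First I would set up the divisor side. Since $\Theta$ is a $3$-fold inside the holomorphic symplectic fourfold $X$, and a symplectic manifold has no isotropic subvariety of dimension $>n=2$, the restriction $\sigma_X|_\Theta$ is nonzero. Let $\tau\colon\tilde\Theta\to\Theta$ be a resolution, $\Phi\colon\tilde\Theta\to X$ the resulting morphism, and $g\colon\tilde\Theta\to W$ the MRC fibration. Because holomorphic forms on a uniruled variety are pulled back from the base of its MRC fibration, one has $\Phi^*\sigma_X=g^*\omega_W$ for a necessarily nonzero form $\omega_W\in H^{2,0}(W)$; in particular $W$ is a smooth projective surface. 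Letting $I:=(g,\Phi)_*[\tilde\Theta]\in {\rm CH}^3(W\times X)$ be the incidence correspondence, the projection formula gives $[I]^*\omega_W=\Phi_*(g^*\omega_W)=\Phi_*\Phi^*\sigma_X=\delta\,\theta\sigma_X$, with $\delta\neq 0$ the degree of $\Phi$ onto $\Theta$, a class in $H^{3,1}(X)$.

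Next I would bring in the fibration. For a Lagrangian fibration of a fourfold the hypothesis $H^0(\tilde B,\Omega^2_{\tilde B})=0$ of Proposition \ref{prodebaseactGamma} holds automatically: a nonzero $2$-form on the surface $B$ would pull back to a nonzero, hence generating, element of $H^{2,0}(X)=\mathbb{C}\sigma_X$, forcing $\sigma_X$ to be degenerate. Thus Proposition \ref{prodebaseactGamma} applies and yields $\Gamma_\pi\in {\rm CH}^3(X\times X)$ with $[\Gamma_\pi]^*(h^2\sigma_X)=c_1\,l\sigma_X$, $c_1\neq 0$, as in (\ref{eqaction2}), and by conjugation $[\Gamma_\pi]^*(h^2\overline{\sigma_X})=c_1\,l\overline{\sigma_X}$. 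I then set $Z_W:={}^t\Gamma_\pi\circ I$. A codimension count gives ${\rm codim}\,Z_W=3+3-\dim X=2$, so $Z_W\in {\rm CH}^2(W\times X)$, and $[Z_W]^*\omega_W=[{}^t\Gamma_\pi]^*(\delta\,\theta\sigma_X)$; since $Z_W$ is a codimension $2$ algebraic cycle, $[Z_W]^*$ is a morphism of Hodge structures of bidegree $(0,0)$, so this class lies in $H^{2,0}(X)$.

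Finally I would prove that this class is nonzero, which is where $q(\theta,l)\neq 0$ enters. Using the adjunction $\langle[{}^t\Gamma_\pi]^*u,v\rangle_X=\langle u,[\Gamma_\pi]^*v\rangle_X$ and testing against $h^2\overline{\sigma_X}\in H^{2,4}(X)$, which detects the $\sigma_X$-component by the second Hodge-Riemann relations, I obtain
$$\langle[Z_W]^*\omega_W,h^2\overline{\sigma_X}\rangle_X=\delta\,\langle\theta\sigma_X,[\Gamma_\pi]^*(h^2\overline{\sigma_X})\rangle_X=\delta c_1\int_X\theta\,l\,\sigma_X\overline{\sigma_X}.$$
By the Beauville-Fujiki relations this integral is a nonzero multiple of $q(\theta,l)\,q(\sigma_X,\overline{\sigma_X})$, which is nonzero precisely because $q(\theta,l)\neq 0$ and $q(\sigma_X,\overline{\sigma_X})\neq 0$. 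Hence $[Z_W]^*\omega_W$ is a nonzero multiple of $\sigma_X$, so $[Z_W]^*\colon H^{2,0}(W)\to H^{2,0}(X)$ is surjective and Proposition \ref{procharles} yields the Lefschetz standard conjecture in degree $2$ for $X$. I expect the main obstacle to be the divisor-side input of the second paragraph: identifying the MRC base as a surface carrying a nonzero holomorphic $2$-form whose pullback is exactly $\sigma_X|_{\tilde\Theta}$, so that the incidence correspondence genuinely recovers $\theta\sigma_X$ (rather than some other class of type $(3,1)$); everything downstream is correspondence bookkeeping together with the Fujiki computation.
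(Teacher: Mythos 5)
Your proof is correct, but it follows a genuinely different route from the paper's. The paper does not use $\Gamma_\pi$ at all in this proof: it forms the fiber product $\Gamma=\widetilde{\Theta}\times_B\widetilde{\Theta}$ together with the ``sum map up to isogeny'' $\mu:X\times_BX\dashrightarrow X$, and shows that $\psi=\mu_{\mid\Gamma}:\Gamma\to X$ is a generically finite surjection admitting a map to $S\times S$ ($S$ the MRC base of $\widetilde{\Theta}$) through which $\psi^*\sigma_X$ factors, by the surface-decomposition identity (\ref{eqpubaform}) of \cite{voisintriangle}; pushing forward by $\psi$ then feeds directly into Proposition \ref{procharles}. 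In that argument the hypothesis $q(\theta,l)\neq0$ is used \emph{geometrically}: via the Fujiki identity $l^2\theta^2\propto q(l,\theta)^2$ it forces $\pi_{\mid\Theta}$ to be surjective, which is what makes $\psi$ dominant. You instead use $q(\theta,l)\neq0$ purely \emph{numerically}, at the very last step, in the pairing $\int_X\theta\,l\,\sigma_X\overline{\sigma_X}\propto q(\theta,l)q(\sigma_X,\overline{\sigma_X})$; your construction (compose the incidence correspondence of the divisor with ${}^t\Gamma_\pi$ and test the resulting $(2,0)$-class against $h^2\overline{\sigma_X}$) is essentially the mechanism of Lemma \ref{lelefcal} transplanted to the divisor setting, and it requires Proposition \ref{prodebaseactGamma} (your verification that $H^0(\widetilde{B},\Omega^2_{\widetilde{B}})=0$ is automatic for a Lagrangian fibration is correct) together with the descent of $\Phi^*\sigma_X$ to the MRC base $W$ -- a standard fact about holomorphic forms and rationally connected fibers which the paper also uses, implicitly, when it asserts that $S$ is a surface. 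What each approach buys: yours is self-contained within the toolkit already set up for Theorem \ref{theobasic} and avoids both the fiber product construction and the citation of \cite{voisintriangle} (and, as a byproduct, never needs $\pi_{\mid\Theta}$ to dominate $B$); the paper's proof avoids $\Gamma_\pi$ and produces the stronger geometric output of a surface decomposition of $X$, which has further uses for zero-cycles. The one point you should make fully precise is the descent step $\Phi^*\sigma_X=g^*\omega_W$ (resolve the MRC map to an actual morphism onto a smooth projective $W$, and check that all graded pieces $g^*\Omega^i_W\otimes\Omega^{2-i}_{\widetilde{\Theta}/W}$ with $i<2$ kill the form because the fibers are rationally connected); you correctly identified this as the delicate input.
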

\begin{proof} Let $\widetilde{\Theta}$ be a desingularization of $\Theta$ and
let $p:\widetilde{\Theta}\rightarrow S$ be the maximal rationally connected fibration of $\widetilde{\Theta} $. Then $S$ is a surface since
$\widetilde{\Theta}$ has a nonzero holomorphic $2$-form pulled-back from $X$. We claim  that the condition $q(\theta,l)\not=0$ implies that
$\pi_{\mid \Theta}: {\Theta}\rightarrow B$ is surjective. Indeed, if $\pi_{\mid \Theta}: {\Theta}\rightarrow B$ is not  surjective, then
$l^2\theta^2=0$ and this implies $q(l,\theta)=0$ by the following argument. The Beauville-Fujiki relations give, for a nonzero rational constant $c_X$,
$$(\alpha l+\beta \theta)^4=c_X q(\alpha l+\beta \theta)^2=c_X(2\alpha\beta q( l,\theta)+\beta^2 q(\theta))^2,$$
and comparing the coefficients in $\alpha^2\beta^2$ we get that $l^2\theta^2$ is proportional to $q(l,\theta)^2$, which proves the claim.
 We  then  construct as in \cite{voisintriangle} a surface decomposition for $X$ using
the ``sum map up to isogeny" $\mu:X\times_B X\dashrightarrow X$, for which we only need to use a surface $T\subset X$ providing a multisection of $\pi$.
The surface decomposition  is given by the following construction: let
$$\Gamma=\widetilde{\Theta}\times_{B}\widetilde{\Theta},\,\,\psi=\mu_{\mid \widetilde{\Theta}\times_{B}\widetilde{\Theta}}:\Gamma\rightarrow X,$$
and $\phi:=(p,p)_{\mid\widetilde{\Theta}\times_{B}\widetilde{\Theta}}: \Gamma\rightarrow S\times S$.
The claim implies that $\psi$ is surjective, hence generically finite since ${\rm dim}\,\Gamma=4$.
It is proved in \cite{voisintriangle} that these maps satisfy
\begin{eqnarray}\label{eqpubaform} \psi^*\sigma_X=\phi^* ({\rm pr}_1^*\sigma_{S,1}+{\rm pr}_2^*\sigma_{S,2})
\end{eqnarray}
for some holomorphic forms $\sigma_{S,i}$ on $S$.
As $\psi_*\psi^*\sigma_X=({\rm deg}\,\psi) \sigma_X$, (\ref{eqpubaform})  implies the Lefschetz standard conjecture, using Proposition \ref{procharles}.
\end{proof}
\begin{rema}{\rm If $X$ has a second Lagrangian fibration, then it is likely that one can take for $\Theta$ the locus of singular fibers of $\pi'$, but to our knowledge,  it is not fully proved that this divisor is uniruled.}
\end{rema}
\subsection{More general coverings by Lagrangian varieties \label{secprooflagcov}}
This section is devoted to the proof of Theorem \ref{corodim4}. First of all, we have the following criterion for the Lefschetz standard conjecture to hold in degree $2$.

 \begin{prop}\label{proassezgencriterelef} Let $X$ be a smooth projective $2n$-fold with polarization $h_X$ and
 let $\Gamma\in{\rm CH}^{n+1}(X\times X)$ having the property that the intersection pairing $\langle\,,\,\rangle_X$ is nondegenerate on
 $[\Gamma]^* (h_X^{2n-2} H^2(X,\mathbb{Q})_{tr})\subset H^{2n}(X,\mathbb{Q})$.
 Then $X$ satisfies the Lefschetz standard conjecture for degree $2$ cohomology.
 \end{prop}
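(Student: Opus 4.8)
The plan is to use the correspondence $\Gamma$ together with its transpose ${}^t\Gamma$ to manufacture, out of the nondegeneracy hypothesis, a codimension $2$ cycle that inverts the Lefschetz operator on the \emph{transcendental} part of $H^2$, and then to invoke Remark \ref{remaintro} to upgrade it to a genuine Lefschetz cycle on all of $H^2(X,\mathbb{Q})$. Throughout I write $L:=h_X^{2n-2}\cup : H^2(X,\mathbb{Q})_{tr}\to H^{4n-2}(X,\mathbb{Q})$ for the Lefschetz operator (injective by hard Lefschetz, since $H^2(X,\mathbb{Q})_{tr}$ is primitive in $H^2$) and $A:=[\Gamma]^*\circ L: H^2(X,\mathbb{Q})_{tr}\to H^{2n}(X,\mathbb{Q})$; the hypothesis is exactly that $\langle\,,\,\rangle_X$ is nondegenerate on $W:=\mathrm{Im}\,A=[\Gamma]^*(h_X^{2n-2}H^2(X,\mathbb{Q})_{tr})$.

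The first step is the adjunction identity between $\Gamma$ and ${}^t\Gamma$, namely $\langle [\Gamma]^*a,b\rangle_X=\langle a,[{}^t\Gamma]^*b\rangle_X$ for $a,b$ of complementary degree, which follows directly from the projection formula. Let $\Gamma'\in\mathrm{CH}^2(X\times X)$ be the composite correspondence built from $\Gamma$ and ${}^t\Gamma$ with $[\Gamma']^*=[{}^t\Gamma]^*\circ[\Gamma]^*$ (its codimension is $(n+1)+(n+1)-2n=2$), and set $\phi:=[\Gamma']^*\circ L$, an endomorphism of $H^2(X,\mathbb{Q})_{tr}$. Applying the adjunction identity gives, for all $\alpha,\beta\in H^2(X,\mathbb{Q})_{tr}$,
$$\langle \phi(\alpha),\,L\beta\rangle_X=\langle [{}^t\Gamma]^*(A\alpha),\,L\beta\rangle_X=\langle A\alpha,\,[\Gamma]^*(L\beta)\rangle_X=\langle A\alpha,\,A\beta\rangle_X=:P(\alpha,\beta).$$

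The second step analyses $\phi$. Since $\phi$ is induced by an algebraic correspondence, it is a morphism of Hodge structures $H^2(X,\mathbb{Q})_{tr}\to H^2(X,\mathbb{Q})$; as $H^2(X,\mathbb{Q})_{tr}$ is generated as a Hodge structure by $H^{2,0}(X)$ and carries no nonzero Hodge class, the component of $\phi$ into $\mathrm{NS}(X)_{\mathbb{Q}}$ vanishes, so $\phi$ preserves $H^2(X,\mathbb{Q})_{tr}$. The displayed identity identifies the radical of the symmetric form $P$ with $\ker A\cap H^2(X,\mathbb{Q})_{tr}$: indeed $\alpha$ is in the radical iff $A\alpha$ is $\langle\,,\,\rangle_X$-orthogonal to $W=\mathrm{Im}\,A$, which by the nondegeneracy hypothesis forces $A\alpha=0$. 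Thus the hypothesis, read through the identity, says precisely that $P$ is nondegenerate on $H^2(X,\mathbb{Q})_{tr}$, i.e. that $A$ is injective there (equivalently, since $\ker A$ is a sub-Hodge-structure of the transcendental lattice and the latter has no nonzero Hodge class, that $A$ is injective on $H^{2,0}(X)$). Consequently $\phi$ is injective, hence an automorphism of $H^2(X,\mathbb{Q})_{tr}$. \textbf{This is the step I expect to be the main obstacle}, since it is where the nondegeneracy of $\langle\,,\,\rangle_X$ on the image $W$ must be converted into nondegeneracy of the pulled-back form $P$ on $H^2(X,\mathbb{Q})_{tr}$; the essential inputs are the semisimplicity of the polarizable transcendental Hodge structure and its lack of nonzero Hodge classes, together with the verification that $\phi$ respects the transcendental part.

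The final step is the inversion. As $\phi\in\mathrm{End}_{\mathbb{Q}}(H^2(X,\mathbb{Q})_{tr})$ is an automorphism of a finite-dimensional $\mathbb{Q}$-vector space, Cayley--Hamilton expresses $\phi^{-1}=p(\phi)$ for some polynomial $p$ with rational coefficients. Realizing $\phi$ as $[\Phi]^*$ for the degree-preserving correspondence $\Phi\in\mathrm{CH}^{2n}(X\times X)$ obtained by composing $\Gamma'$ with the diagonal-supported cycle representing $L$, the correspondence $\Psi:=p(\Phi)\in\mathrm{CH}^{2n}(X\times X)$ (a polynomial in $\Phi$ under composition, with $\Phi^{\circ 0}=\Delta_X$) satisfies $[\Psi]^*|_{H^2(X,\mathbb{Q})_{tr}}=\phi^{-1}$. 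Then $\mathcal{Z}:=\Psi\circ\Gamma'\in\mathrm{CH}^2(X\times X)$ obeys $[\mathcal{Z}]^*\circ L=p(\phi)\circ\phi=\mathrm{Id}$ on $H^2(X,\mathbb{Q})_{tr}$, so $[\mathcal{Z}]^*$ inverts the Lefschetz isomorphism on transcendental cohomology. By Remark \ref{remaintro} one adjusts $\mathcal{Z}$ by a decomposable cycle in $\mathrm{NS}(X)\otimes\mathrm{NS}(X)$ to obtain a cycle $\mathcal{Z}_{\rm lef}$ inverting $L$ on all of $H^2(X,\mathbb{Q})$, which establishes the Lefschetz standard conjecture for degree $2$ cohomology of $X$.
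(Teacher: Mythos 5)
Your proof is correct in substance and its core coincides with the paper's: both form the codimension-$2$ composite $\Gamma'$ with $[\Gamma']^*=[{}^t\Gamma]^*\circ[\Gamma]^*$, both use the adjunction $\langle[\Gamma']^*\alpha,\beta\rangle_X=\langle[\Gamma]^*\alpha,[\Gamma]^*\beta\rangle_X$ to convert the hypothesis into nondegeneracy of a form on $h_X^{2n-2}H^2(X,\mathbb{Q})_{tr}$, and both use the absence of nonzero morphisms from $H^2(X,\mathbb{Q})_{tr}$ to a trivial Hodge structure to see that $[\Gamma']^*$ preserves the transcendental part. The difference is the endgame. Once the paper knows that $[\Gamma']^*:h_X^{2n-2}H^2(X,\mathbb{Q})_{tr}\to H^2(X,\mathbb{Q})_{tr}$ is an isomorphism, it restricts $\Gamma'$ to $X\times\Sigma$ for a complete intersection surface $\Sigma$ and invokes Proposition \ref{procharles} (Charles' criterion), whose nontrivial direction is taken as a black box; you instead invert $\phi=[\Gamma']^*\circ L$ explicitly via Cayley--Hamilton, realizing $p(\phi)$ by a polynomial in the degree-preserving correspondence $\Phi$ and concluding with Remark \ref{remaintro}. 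Your route is more self-contained, at the cost of the extra correspondence algebra, and it needs two small repairs. First, with the paper's convention $[\Gamma_2\circ\Gamma_1]^*=[\Gamma_1]^*\circ[\Gamma_2]^*$, your final cycle should be $\Gamma'\circ\Psi$ rather than $\Psi\circ\Gamma'$, so that $[\mathcal{Z}]^*=[\Psi]^*\circ[\Gamma']^*$ and $[\mathcal{Z}]^*\circ L=p(\phi)\circ\phi=\mathrm{Id}$ on $H^2(X,\mathbb{Q})_{tr}$; as written the two operators compose in the wrong order. Second, as you yourself half-notice, identifying the radical of $P$ with $\ker A$ yields nondegeneracy of $P$ only once $A$ is known to be injective, and this is not a formal consequence of nondegeneracy of $\langle\,,\,\rangle_X$ on the image $W$; the paper's proof makes the same silent identification, and in the intended applications ($H^{2,0}(X)$ a line and $W\neq 0$) injectivity is automatic because $\ker A$ is a sub-Hodge structure of $H^2(X,\mathbb{Q})_{tr}$, which contains no nonzero Hodge class. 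Neither point affects the validity of your argument once adjusted.
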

 \begin{proof} We consider the codimension $2$ cycle $\Gamma':= {\Gamma}\circ {^t\Gamma}\in {\rm CH}^2(X\times X)$. We claim that
 \begin{eqnarray}\label{eqnoudu7sep} [\Gamma']^*: h_X^{2n-2} H^2(X,\mathbb{Q})_{tr}\rightarrow H^2(X,\mathbb{Q})_{tr}\end{eqnarray}
 is an isomorphism. We note first that $[\Gamma']^*: h_X^{2n-2} H^2(X,\mathbb{Q})_{tr}\rightarrow H^2(X,\mathbb{Q})$ has image contained
 in $H^2(X,\mathbb{Q})_{tr}$, so that (\ref{eqnoudu7sep}) is well defined. This is because $[\Gamma']^*$ is a morphism of Hodge structures and by definition of transcendental
 cohomology, any Hodge substructure of $ H^2(X,\mathbb{Q})_{tr}$ which (after tensoring with $\mathbb{C}$) contains $ H^{2,0}(X)$ equals $ H^2(X,\mathbb{Q})_{tr}$. This implies that any morphism of Hodge structures from $ H^2(X,\mathbb{Q})_{tr}$ to a trivial Hodge structure is $0$, hence the composition $$[\Gamma']^*: h_X^{2n-2} H^2(X,\mathbb{Q})_{tr}\rightarrow H^2(X,\mathbb{Q})\rightarrow H^2(X,\mathbb{Q})_{tr}$$ is zero, where on the right, we see  $H^2(X,\mathbb{Q})_{tr}$ as the quotient of $ H^2(X,\mathbb{Q})$ by the space of rational  Hodge classes on $X$.
 Next, as both spaces in (\ref{eqnoudu7sep}) have the same dimension,  in order to prove the claim, it suffices to show that the intersection pairing $$(\alpha,\beta):=\langle [\Gamma']^*\alpha,\beta\rangle_X$$
 on $h_X^{2n-2} H^2(X,\mathbb{Q})_{tr}$ is nondegenerate. But we have by definition of $\Gamma'$
 $$\langle [\Gamma']^*\alpha,\beta\rangle_X=\langle [\Gamma]^*\alpha,[\Gamma]^*\beta\rangle_X$$
 and the pairing defined by the right hand side is nondegenerate by assumption. The claim implies the proposition since
 by restricting $\Gamma'$ to $ X \times\Sigma$ where $\Sigma\subset X$ is a surface complete intersection  of hypersurfaces of class $h_X$,
 we find that $[\Gamma]^*: H^{2,0}(\Sigma)\rightarrow H^{2,0}(X)$ is surjective, so that Proposition \ref{procharles} applies.
 \end{proof}
Assume now that a hyper-K\"ahler manifold $X$ admits a Lagrangian covering, given by  a diagram as in Definition \ref{defigoodcov}
\begin{eqnarray}
\label{eqdiaglag}
\begin{matrix}
 & L&\stackrel{\Phi}{\longrightarrow}& X
 \\
&\pi\downarrow& &
\\
&B& &
\end{matrix}
\end{eqnarray}
of smooth projective varieties, such that $\Phi$ is dominant and,  for a general point $b\in B$,
 $\Phi_{\mid L_b}$ is generically finite to its image, which is a Lagrangian subvariety  of $X$.

If  ${\rm dim}\,X=2n$, so ${\rm dim}\,L_b=n$ and ${\rm dim}\,B\geq n$,
 as the map $\Phi_{\mid L_b}$ is generically finite to its image, up to replacing $B$ by a complete intersection   of ample hypersurfaces, one can assume that ${\rm dim}\,L={\rm dim}\,X$ and $\Phi$ is dominant generically finite of degree $N$.
  We now  introduce  a relative  Poincar\'{e} divisor $d\in{\rm CH}^1(L\times_BL)$, that is, for general $b\in B$,
 $d_b\in{\rm Pic}(L_b\times L_b)$ has the property that
 the map $y\in L_b\mapsto d_{b,y}\in {\rm Pic}(L_b)$ induces an  isogeny ${\rm Alb}(L_b)\rightarrow {\rm Pic}^0(L_b)$. Using the fact that $\Phi_{\mid L_b}$ is generically finite on its image for a general $b\in B$, for an ample line bundle $H_X$ on $X$,
 $\Phi^*H_{X\mid L_b}$ is big hence gives a polarization on the Hodge structure on $H^1(X_b,\mathbb{Q})$, hence an isogeny $\eta:{\rm Alb}(X_b)\rightarrow {\rm Pic}^0(X_b)$. We thus have a natural rational map (which is a morphism over a Zariski open set of $B$)
 $$\rho=(\rho_1,\rho_2): L\times_BL\dashrightarrow {\rm Alb}(X/B)\times_B {\rm Pic}^0(X/B)$$
 where $\rho_1$ is an isogenous  version of the relative  Albanese map constructed as in the previous section   using a multisection of $\pi$ and $\rho_2=\eta\circ\rho_1)$, and we can pull-back under  $\rho$ a relative Poincar\'{e} divisor on ${\rm Alb}(X/B)\times_B {\rm Pic}^0(X/B)$. This provides the desired  codimension $1$ cycle  $d$ on $L\times_BL$, and
   pushing-forward $d$ via the inclusion $L\times_BL\hookrightarrow L\times L$,
 we get  a codimension $n+1$-cycle $\Gamma_L\in{\rm CH}^{n+1}(L\times L)$.
  Denote by $L^1H^{2,0}(L)\subset H^{2,0}(L)$ the Leray level
$$L^1H^{2,0}(L)={\rm Ker}\,(H^{2,0}(L)\rightarrow H^{2,0}(L_b)).$$ The fact that $\Phi:L\rightarrow X$ is a covering by Lagrangian varieties says equivalently that
$\Phi^*\sigma_X\in L^1H^{2,0}(L)$.
 \begin{lemm} For any  $\sigma\in L^1H^{2,0}(L)$, one has

\begin{eqnarray}\label{eqactionpourlagcov}  [\Gamma_{L}]^*(({\Phi}^*h_X^{n-1})\sigma)=\lambda \sigma\,\,{\rm mod}\,\,\pi^*H^{2,0}(B)
\end{eqnarray}
for some nonzero rational coefficient $\lambda$.
\end{lemm}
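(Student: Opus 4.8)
The plan is to transport the proof of Proposition \ref{prodebaseactGamma} to the present Lagrangian-covering setting, with the Leray condition $\sigma\in L^1H^{2,0}(L)$ playing the role that the Lagrangian condition (\ref{eqpoursigmaleray}) played there, and with Remark \ref{rematardivedu7sept} accounting for the term $\pi^*H^{2,0}(B)$, which we can no longer discard since $H^{2,0}(B)$ need not vanish. Note that $\Gamma_L$ is a relative correspondence, supported on $L\times_BL$, so it induces a relative self-correspondence $\Gamma_{L,rel}$ on $L_U\times_UL_U$, where $U\subset B$ is the Zariski open locus over which $\pi$ is smooth and $\Phi_{\mid L_b}$ is generically finite; I write $L_U:=\pi^{-1}(U)$ and $\tilde h:=\Phi^*h_X$.

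First I would reduce to an equality of holomorphic forms on $L_U$. By Deligne's injectivity theorem, already used in Proposition \ref{prodebaseactGamma}, the restriction $H^{2,0}(L)\to H^2(L_U)$ is injective and factors through $H^0(L,\Omega_L^2)\to H^0(L_U,\Omega_{L_U}^2)$, so it suffices to prove (\ref{eqactionpourlagcov}) in $H^0(L_U,\Omega_{L_U}^2)$. The condition $\sigma\in L^1H^{2,0}(L)$ says that $\sigma$ restricts to zero on the general fibre $L_b$, hence, being holomorphic, $\sigma\in\Gamma(\pi^*\Omega_U\wedge\Omega_{L_U})$, exactly as in (\ref{eqpoursigmaleray}). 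From the relative cotangent sequence, $R^0\pi_*(\pi^*\Omega_U\wedge\Omega_{L_U})$ surjects onto $\Omega_U\otimes R^0\pi_*\Omega_{L_U/U}$ with kernel $\Omega_U^2$; thus proving the identity in this quotient proves it on $L_U$ modulo $\pi^*H^0(\widetilde{B},\Omega_{\widetilde{B}}^2)$, that is, modulo $\pi^*H^{2,0}(B)$, which is precisely the content of Remark \ref{rematardivedu7sept}.

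It then remains, as in the proof of (\ref{eqaction}), to show that the fibrewise endomorphism
$$\Gamma_{L,rel}^*\circ\tilde h^{n-1}\colon R^0\pi_*\Omega_{L_U/U}\xrightarrow{\tilde h^{n-1}}R^{n-1}\pi_*\Omega_{L_U/U}^n\xrightarrow{\Gamma_{rel}^*}R^0\pi_*\Omega_{L_U/U}$$
is multiplication by a nonzero rational constant $\lambda$. Fibrewise this is the composite $H^{1,0}(L_b)\xrightarrow{\cup\tilde h^{n-1}}H^{n,n-1}(L_b)\xrightarrow{d_b^*}H^{1,0}(L_b)$, and here the abelian-variety computation of Proposition \ref{prodebaseactGamma} must be carried over to the non-abelian fibre $L_b$. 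The key observation is that every holomorphic $1$-form on $L_b$ is pulled back along the Albanese map $a_b\colon L_b\to A:={\rm Alb}(L_b)$, so by the projection formula $d_b^*(\tilde h^{n-1}\cup a_b^*\omega)$ is governed by the single class $(a_b)_*(\tilde h^{n-1})\in H^{2g-2}(A)$, $g=\dim A$. Because the isogeny $\eta$, and hence the relative Poincar\'e divisor $d$, were built from the polarization induced by $\tilde h$ on $H^1(L_b)=H^1(A)$, this pushed-forward class equals, up to a nonzero scalar, a power $\theta_A^{g-1}$ of that very polarization. The composite therefore reduces to the standard duality operation on $A$: cup with $\theta_A^{g-1}$, which is the hard Lefschetz isomorphism $H^{1,0}(A)\to H^{g,g-1}(A)$ for the ample class $\theta_A$, followed by the Fourier--Mukai/Poincar\'e duality realized by $d_b$, and this composite is a nonzero multiple of the identity by the Lefschetz $\mathfrak{sl}_2$-structure on $H^*(A)$.

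The main obstacle is exactly this last fibrewise step. In Proposition \ref{prodebaseactGamma} the fibre was itself an abelian variety and the statement was the textbook duality fact; here one must first route all the relevant holomorphic $1$-forms through $A={\rm Alb}(L_b)$ and, crucially, check the matching that makes $(a_b)_*(\tilde h^{n-1})$ a power of the polarization dualized by $d_b$ --- a matching that holds precisely because $\eta$, and thus $d$, was defined from the class $\tilde h=\Phi^*h_X$ itself. Passing to $A$ also circumvents the fact that $\tilde h$ is merely big on $L_b$, since the needed hard Lefschetz is then applied on $A$ with its genuine ample polarization $\theta_A$. Granting this, the nonvanishing of $\lambda$ follows from hard Lefschetz together with the second Hodge--Riemann relations, in the same spirit as Lemma \ref{lelefcal}.
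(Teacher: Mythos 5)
Your proposal is correct and follows essentially the same route as the paper, which simply reduces to the argument of Proposition \ref{prodebaseactGamma} (Deligne injectivity, the Leray/Lagrangian condition placing $\sigma$ in $\Gamma(\pi^*\Omega_U\wedge\Omega_{L_U})$, reduction to the quotient $\Omega_U\otimes R^0\pi_*\Omega_{L_U/U}$ with Remark \ref{rematardivedu7sept} accounting for the $\pi^*H^{2,0}(B)$ term) and then transfers the fibrewise duality computation to the relative Albanese, noting that nondegeneracy of the $2$-form was never used. You spell out the fibrewise Albanese step --- in particular the matching between $(a_b)_*(\tilde h^{n-1})$ and the polarization $\theta_A$ defining $\eta$ --- in more detail than the paper, which just invokes the standard duality fact for abelian varieties, but the underlying argument is the same.
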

\begin{proof} The proof is essentially the same as the proof of  (\ref{eqaction}) of  Proposition \ref{prodebaseactGamma}. The geometric context is slightly different since we were working in Proposition \ref{prodebaseactGamma} with a family of abelian varieties. However, we can work above with the family of abelian varieties  ${\rm Alb}(X/B)$ which inherits a holomorphic  $2$-form. The proof of (\ref{eqaction}) did not use indeed the fact that the holomorphic $2$-form is nondegenerate.  Furthermore, as noticed in Remark \ref{rematardivedu7sept} the proof given there proves this equality in general,
modulo $\pi^*H^{2,0}(B)$.
\end{proof}
  Using the generically finite map  $\Phi:L\rightarrow X$, we  get a
 codimension $n+1$-cycle

\begin{eqnarray}\label{eqdefgammaXL} \Gamma_{L,X}:=(\Phi,\Phi)_*(\Gamma_{L}).
\end{eqnarray}

We have the following result
\begin{prop}\label{coroprovisoire} In the situation above, assume that  $\Phi_*(\pi^* H^{2,0}(B))=0$. Then
\begin{eqnarray}\label{eqevidente}  [\Gamma_{L,X}]^{*}(h_X^{n-1}\sigma_X)=\lambda'\sigma_X
\end{eqnarray}
for some nonzero coefficient $\lambda'$. Furthermore,
\begin{eqnarray}
\label{eqdualeevidente}[{ ^t\Gamma}_{L,X}]^{*}(h_X^{2n-2}\sigma_X)\not=0\,\,{\rm in} \,\,H^{2n}(X,\mathbb{C}).
\end{eqnarray}
\end{prop}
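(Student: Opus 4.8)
The plan is to deduce \eqref{eqevidente} from the sheaf-theoretic relation \eqref{eqactionpourlagcov} by pushing forward along $\Phi$, and then to derive \eqref{eqdualeevidente} from \eqref{eqevidente} by a duality/adjunction argument together with the Hodge--Riemann relations, exactly in the spirit of Lemma \ref{lelefcal}. First I would record the compatibility of the $*$-action of correspondences with the pushforward $(\Phi,\Phi)_*$ and the (generically finite) projection formula: since $\Gamma_{L,X}=(\Phi,\Phi)_*(\Gamma_L)$, one has for $\alpha\in H^*(X)$ the identity $[\Gamma_{L,X}]^*\alpha=\Phi_*\bigl([\Gamma_L]^*(\Phi^*\alpha)\bigr)$, up to the usual factor, because $\mathrm{pr}_i\circ(\Phi,\Phi)=\Phi\circ\mathrm{pr}_i$. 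Applying this to $\alpha=h_X^{n-1}\sigma_X$ and using the projection formula $\Phi^*(h_X^{n-1}\sigma_X)=(\Phi^*h_X^{n-1})\,\Phi^*\sigma_X$, I land precisely on the left side of \eqref{eqactionpourlagcov} with $\sigma=\Phi^*\sigma_X$, which lies in $L^1H^{2,0}(L)$ because the covering is Lagrangian.

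Next I would invoke \eqref{eqactionpourlagcov}, which gives $[\Gamma_L]^*\bigl((\Phi^*h_X^{n-1})\Phi^*\sigma_X\bigr)=\lambda\,\Phi^*\sigma_X \bmod \pi^*H^{2,0}(B)$. Pushing forward by $\Phi_*$ and using the hypothesis $\Phi_*(\pi^*H^{2,0}(B))=0$ kills the ambiguity term, yielding $[\Gamma_{L,X}]^*(h_X^{n-1}\sigma_X)=\lambda\,\Phi_*\Phi^*\sigma_X$. Since $\Phi$ is dominant generically finite of degree $N$, one has $\Phi_*\Phi^*\sigma_X=N\sigma_X$, so \eqref{eqevidente} holds with $\lambda'=N\lambda\neq 0$. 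The nonvanishing of $\lambda'$ is automatic once $\sigma_X\neq 0$, which is guaranteed on a hyper-K\"ahler manifold.

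For \eqref{eqdualeevidente}, I would argue exactly as in Lemma \ref{lelefcal}: it suffices to exhibit one class against which $[{}^t\Gamma_{L,X}]^*(h_X^{2n-2}\sigma_X)$ pairs nontrivially. The natural candidate is $\overline{\sigma_X}$. By the adjunction $\langle [{}^t\Gamma_{L,X}]^*\beta,\gamma\rangle_X=\langle \beta,[\Gamma_{L,X}]^*\gamma\rangle_X$, I would compute
\begin{eqnarray*}
\langle [{}^t\Gamma_{L,X}]^*(h_X^{2n-2}\sigma_X),\,\overline{\sigma_X}\rangle_X
=\langle h_X^{2n-2}\sigma_X,\,[\Gamma_{L,X}]^*\overline{\sigma_X}\rangle_X.
\end{eqnarray*}
Applying \eqref{eqevidente} to the conjugate form (all the constructions are defined over the reals, so the relation also holds for $\overline{\sigma_X}$, giving $[\Gamma_{L,X}]^*(h_X^{n-1}\overline{\sigma_X})=\lambda'\overline{\sigma_X}$), and rearranging the cup products, reduces the pairing to a nonzero multiple of $\int_X h_X^{2n-2}\sigma_X\overline{\sigma_X}$, which is nonzero by the second Hodge--Riemann bilinear relations. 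Hence \eqref{eqdualeevidente} follows.

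The main obstacle I anticipate is the careful bookkeeping in the reduction of \eqref{eqdualeevidente} to the Hodge--Riemann integral: the correspondence $\Gamma_{L,X}$ raises degree, so one must track which power of $h_X$ is consumed by $[\Gamma_{L,X}]^*$ versus remaining in the integrand, and verify that \eqref{eqevidente} can indeed be fed back in after transposition without a degree mismatch. The cleanest route is probably to express the pairing $\langle [{}^t\Gamma_{L,X}]^*(h_X^{2n-2}\sigma_X),\overline{\sigma_X}\rangle_X$ symmetrically and recognize it as $\langle [\Gamma_{L,X}]^*(h_X^{n-1}\overline{\sigma_X}),\,h_X^{n-1}\sigma_X\rangle_X$ after distributing the powers of $h_X$, at which point \eqref{eqevidente} applies directly. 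The hypothesis $\Phi_*(\pi^*H^{2,0}(B))=0$ is the crucial input, and its necessity (replaced in dimension $4$ by a dimension count) should be flagged.
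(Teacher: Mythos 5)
Your proposal is essentially the paper's own proof: (\ref{eqevidente}) is obtained exactly as there, by applying (\ref{eqactionpourlagcov}) to $\Phi^*\sigma_X$, pushing forward by $\Phi$, and using $\Phi_*(\pi^*H^{2,0}(B))=0$ together with $\Phi_*\circ\Phi^*=D\,\mathrm{Id}$; and (\ref{eqdualeevidente}) follows by adjunction plus the second Hodge--Riemann relations. The one correction concerns the bookkeeping point you flagged yourself: since $[{}^t\Gamma_{L,X}]^*(h_X^{2n-2}\sigma_X)$ lies in $H^{2n}(X,\mathbb{C})$, the right test class is $h_X^{n-1}\overline{\sigma_X}$ --- neither $\overline{\sigma_X}$ nor your rearranged pairing $\langle[\Gamma_{L,X}]^*(h_X^{n-1}\overline{\sigma_X}),h_X^{n-1}\sigma_X\rangle_X$ has total degree $4n$ when $n\geq 2$. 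With that choice adjunction gives $\langle h_X^{2n-2}\sigma_X,[\Gamma_{L,X}]^*(h_X^{n-1}\overline{\sigma_X})\rangle_X=\lambda'\int_X h_X^{2n-2}\sigma_X\overline{\sigma_X}\neq 0$ directly from the conjugate of (\ref{eqevidente}), with no redistribution of powers of $h_X$ needed.
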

\begin{proof} We apply (\ref{eqactionpourlagcov})  to ${\Phi}^*\sigma_X $ and obtain
$$[\Gamma_{L}]^*({\Phi}^*(h_X^{n-1}\sigma_X))=\lambda {\Phi}^*\sigma_X+\pi^*\beta$$
for some $\beta\in H^{2,0}(B)$.
Pushing forward this equation to $X$ via $\Phi$, and using the facts that
$\Phi_*(\pi^* H^{2,0}(B))=0$, and
$\Phi_*\circ {\Phi}^*=D Id$,  where $D={\rm deg}\,\Phi$, we get
$$[\Gamma_{L,X}]^*(h_X^{n-1}\sigma_X)=D\lambda \sigma_X,$$
which proves (\ref{eqevidente}), with $\lambda'=D\lambda$.

We have
$$ \langle [{^t\Gamma}_{L,X}]^{*}(h_X^{2n-2}\sigma_X),h_X^{n-1}\overline{\sigma_X}\rangle_X=\langle h_X^{2n-2}\sigma_X,[\Gamma_{L,X}]^{*}h_X^{n-1}\overline{\sigma_X}\rangle_X,
$$
and this is nonzero by (\ref{eqevidente}), which proves (\ref{eqdualeevidente}).
\end{proof}

We now prove Theorem \ref{corodim4}  stated in the introduction,  concerning the case of dimension $4$.
\begin{proof}[Proof of Theorem  \ref{corodim4}] Let $X$ be a hyper-K\"ahler fourfold with $\rho(X)=1$ admitting  a covering by Lagrangian surfaces
\begin{eqnarray}\label{eqlagcovnewnew}
\begin{matrix}
 & L&\stackrel{\Phi}{\longrightarrow}& X
 \\
&\pi\downarrow& &
\\
&B& &
\end{matrix},
\end{eqnarray}
where we can assume that $B$ is a surface, up to replacing $B$ by a complete intersection in $B$.
We perform the construction described above, which provides us with the cycle $\Gamma_{L,X}\in {\rm CH}^3(X\times X)$.
Suppose first that   $\Phi_*(\pi^*H^{2,0}(B))\not=0$ in $H^{2,0}(X)$.  Then,  the correspondence $\Gamma\in {\rm CH}^2(X\times B)$ given by the Lagrangian cover
(\ref{eqlagcovnewnew})
where $B$ is a surface,
directly solves the Lefschetz standard conjecture for degree $2$ cohomology of $X$, since dually
$[\Gamma]^*:H^{2,0}(B)\rightarrow H^{2,0}(X)$ is nonzero, hence surjective, so Proposition \ref{procharles} applies in this case.

We can thus assume that $\Phi_*(\pi^*H^{2,0}(B))=0$ in $H^{2,0}(B)$, so that  Proposition \ref{coroprovisoire} applies, and we conclude that the morphism of Hodge structures
\begin{eqnarray} [\Gamma_{L,X}]^*: h_X^2H^2(X,\mathbb{Q})_{tr}\rightarrow H^4(X,\mathbb{Q})
\end{eqnarray}
is not equal to $0$.

Assume now that $X$ is very general in moduli, which is part of the  assumptions of Theorem \ref{corodim4}. Then, by the local surjectivity of the period map,  the Mumford-Tate group of the Hodge structure on $H^2(X,\mathbb{Q})_{tr}$ is the  whole orthogonal group of the Beauville-Bogomolov quadratic form. We now have
\begin{lemm} \label{lenouveaupourdim4}If the Hodge structure on $H^2(X,\mathbb{Q})_{tr}$ has maximal Mumford-Tate group, any morphism of Hodge
structure $\phi: H^2(X,\mathbb{Q})_{tr}\rightarrow H^4(X,\mathbb{Q})$ is given by cup-product by an element
$h\in {\rm NS}(X)_\mathbb{Q}$.
\end{lemm}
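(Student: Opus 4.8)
The plan is to reduce the statement to a computation in the representation theory of the Mumford--Tate group and then to isolate the contribution of the Verbitsky subalgebra. Write $V:=H^2(X,\mathbb{Q})_{tr}$, a polarised weight-$2$ Hodge structure with $h^{2,0}=1$. The maximality of the Mumford--Tate group means that its image in ${\rm GL}(V)$ is the full orthogonal group ${\rm O}(q)$ of the Beauville--Bogomolov form, acting on $V$ through its (absolutely irreducible) standard representation. In particular $V$ is a simple Hodge structure with ${\rm End}_{HS}(V)=\mathbb{Q}$, every morphism of Hodge structures out of $V$ is either $0$ or injective, and any such morphism is equivariant for the Mumford--Tate action. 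Cup-product with an ample class $h_X$ gives an injection $V\hookrightarrow H^4(X,\mathbb{Q})$ (by Hard Lefschetz, $h_X\cup:H^2\to H^4$ is injective since $h_X^2\cup:H^2\to H^6$ is an isomorphism), so $\ell\mapsto (\alpha\mapsto \ell\cup\alpha)$ embeds ${\rm NS}(X)_{\mathbb{Q}}$ into ${\rm Hom}_{HS}(V,H^4(X,\mathbb{Q}))$; the content of the lemma is that this embedding is onto.

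First I would account for the part of $H^4$ generated by $H^2$. By Verbitsky's theorem the cup-product map ${\rm Sym}^2H^2(X,\mathbb{Q})\to H^4(X,\mathbb{Q})$ is injective, because the relations among classes of $H^2$ occur only in degree $\geq 6$ (they are of the form $\alpha^{3}=0$ for isotropic $\alpha$). As a representation of ${\rm O}(q)$ (equivalently, as a Hodge structure on which the Mumford--Tate group acts through ${\rm O}(q)$) one has the decomposition ${\rm Sym}^2H^2={\rm Sym}^2{\rm NS}\oplus ({\rm NS}\otimes V)\oplus {\rm Sym}^2V$. The summand ${\rm Sym}^2{\rm NS}$ is a trivial Hodge structure, so receives no nonzero map from the simple non-trivial $V$; and ${\rm Sym}^2V$ decomposes under ${\rm O}(q)$ as the trivial representation spanned by $q$ plus the irreducible traceless part, neither of which is the standard representation, so ${\rm Hom}_{HS}(V,{\rm Sym}^2V)=0$. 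Hence ${\rm Hom}_{HS}(V,{\rm Sym}^2H^2)={\rm Hom}_{HS}(V,{\rm NS}\otimes V)={\rm NS}(X)_{\mathbb{Q}}\otimes{\rm End}_{HS}(V)={\rm NS}(X)_{\mathbb{Q}}$, and these morphisms are precisely the cup-products $\alpha\mapsto \ell\cup\alpha$.

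It remains to treat the complement. Choosing an ${\rm O}(q)$-stable (equivalently Mumford--Tate-stable) complement $R$ of ${\rm Sym}^2H^2$ in $H^4(X,\mathbb{Q})$, any $\phi\in{\rm Hom}_{HS}(V,H^4)$ splits as a map into ${\rm Sym}^2H^2$, already shown to be a cup-product, plus a map into $R$; I must show ${\rm Hom}_{HS}(V,R)=0$, i.e. that the standard representation does not occur in $H^4$ outside the subalgebra generated by $H^2$. This is the main obstacle. A conceptual way to see it is through the Looijenga--Lunts--Verbitsky description of $H^{*}(X,\mathbb{Q})$ as a module over $\mathfrak{so}(H^2\oplus U)$: the standard representation of ${\rm O}(q)$ occurs in the degree-$4$ part only inside the Verbitsky component, the non-Verbitsky components restricting to ${\rm O}(q)$ without a vector summand. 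A more hands-on substitute, sufficient in the case at hand, is a dimension count: any copy of $V$ inside $H^4$ carries a one-dimensional $(3,1)$-part, so the number of independent copies is bounded by $h^{3,1}(X)$; since the copies already found inside ${\rm Sym}^2H^2$ exhaust $h^{3,1}({\rm Sym}^2H^2)=h^{2,0}(X)\,h^{1,1}(H^2)=b_2(X)-2$, the identity $h^{3,1}(X)=b_2(X)-2$ (valid for the relevant deformation type) forces $h^{3,1}(R)=0$ and hence ${\rm Hom}_{HS}(V,R)=0$. Either way one concludes that $\phi$ is the cup-product by an element of ${\rm NS}(X)_{\mathbb{Q}}$, which proves the lemma; verifying the required input about $R$ --- the absence of the vector representation, or equivalently the Hodge-number identity --- is the step demanding the most care.
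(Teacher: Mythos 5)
Your proof is correct and follows essentially the same route as the paper: decompose $H^4(X,\mathbb{Q})$ as ${\rm Sym}^2H^2$ plus an orthogonal complement, use maximality of the Mumford--Tate group to see that the only copies of $V=H^2(X,\mathbb{Q})_{tr}$ inside ${\rm Sym}^2H^2$ are those in ${\rm NS}(X)_{\mathbb{Q}}\otimes V$, and kill the complement by a Hodge-number count. The one step you flag as delicate --- that the complement contains no copy of $V$, equivalently $h^{3,1}(X)=b_2(X)-2$ --- is exactly the paper's Lemma \ref{remanew111} and holds for every hyper-K\"ahler fourfold with no deformation-type hypothesis: $\sigma_X\wedge\,:\Omega_X\to\Omega_X^3$ is an isomorphism of vector bundles, so $\sigma_X\cup:H^{1}(X,\Omega_X)\to H^{3,1}(X)$ is an isomorphism, whence the orthogonal complement of ${\rm Sym}^2H^2(X,\mathbb{Q})$ has $h^{4,0}=h^{3,1}=0$ and is of Tate type.
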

\begin{proof} We first start with the following easy
\begin{lemm}\label{remanew111}  If $X$ is hyper-K\"ahler of  dimension $4$, the Hodge substructure on
the orthogonal complement  $H^4(X,\mathbb{Q})^{\perp SH^2}$ of ${\rm Sym}^2H^2(X,\mathbb{Q})$ in $H^4(X,\mathbb{Q})$ consists   of Hodge  classes.
\end{lemm}
\begin{proof}  Indeed, the cup-product map
$$\sigma_X\cup:H^{1}(X,\Omega_X^{})\rightarrow H^{1}(X,\Omega_X^{3})=H^{3,1}(X)$$
is an isomorphism, hence the Hodge structure on
 $H^4(X,\mathbb{Q})^{\perp SH^2}$ has $H^{4,0}=0,\,H^{3,1}=0$.
 \end{proof}
We conclude that the Hodge structure on $H^4(X,\mathbb{Q})$
decomposes as
\begin{eqnarray}\label{eqdecomp} H^4(X,\mathbb{Q})={\rm Sym}^2H^2(X,\mathbb{Q})_{tr}\oplus {\rm NS}(X)_\mathbb{Q}\otimes H^2(X,\mathbb{Q})_{tr}\oplus K,
\end{eqnarray}
where $K$ consists of Hodge classes coming either from  ${\rm Sym}^2{\rm NS}(X)_\mathbb{Q}$ or from $H^4(X,\mathbb{Q})^{\perp SH^2}$.
The fact that the Mumford-Tate group of the Hodge structure on $H^2(X,\mathbb{Q})_{tr}$ is the whole orthogonal group implies that
the Hodge structure ${\rm Sym}^2H^2(X,\mathbb{Q})_{tr}$ is the sum of the $1$-dimensional vector space $\mathbb{Q}c_{BB}$ generated by the   Hodge class $c_{BB}$ giving the Beauville-Bogomolov
quadratic form, and an irreducible Hodge structure ${\rm Sym}^2H^2(X,\mathbb{Q})_{tr}^0$ of Hodge niveau $4$. In particular, there is no nontrivial morphism of Hodge structures from $H^2(X,\mathbb{Q})_{tr}$ to ${\rm Sym}^2H^2(X,\mathbb{Q})_{tr}^0$. As there is no nontrivial morphism of Hodge structures from
$H^2(X,\mathbb{Q})_{tr}$ to a trivial Hodge structure, it follows that
the morphism $\phi$ takes value in ${\rm NS}(X)_\mathbb{Q}\otimes H^2(X,\mathbb{Q})_{tr}$, which, as a Hodge structure,  is a sum of copies of $ H^2(X,\mathbb{Q})_{tr}$. As the Mumford-Tate group of the Hodge structure on $H^2(X,\mathbb{Q})_{tr}$ is the whole orthogonal group, there are no nontrivial endomorphisms of the Hodge structure on $H^2(X,\mathbb{Q})_{tr}$, hence $\phi$ is given by cup-product by an element of ${\rm NS}(X)_\mathbb{Q}$.
\end{proof}
We now conclude the proof. As
 $\rho(X)=1$, the divisor class $h$ provided by Lemma \ref{lenouveaupourdim4} is a nonzero multiple of $h_X$. Thus we have by the nonvanishing of  $ [\Gamma_{L,X}]^*$,
 $$[\Gamma_{L,X}]^*\sigma_X=\lambda'' h_X\sigma_X,$$
 for some nonzero coefficient $\lambda''$.
 It then follows from  the second  Hodge-Riemann relations that
 $$\langle [\Gamma_{L,X}]^*\sigma_X,[\Gamma_{L,X}]^*\overline{\sigma_X}\rangle\not=0.$$

But then the intersection pairing on ${\rm Im}\,([\Gamma_{L,X}]^*:h_X^2H^2(X,\mathbb{Q})_{tr}\rightarrow H^2(X,\mathbb{Q})_{tr})$ is nondegenerate, so that Proposition \ref{proassezgencriterelef} applies.
\end{proof}

\section{Zero-cycles \label{SEC2}}

\subsection{Zero-cycles and  the Lefschetz standard conjecture for $H^2$ \label{seclef}}

We wish to discuss in this section  the link between Bloch-Beilinson type  filtrations on the group
${\rm CH}_0(X)$ of $0$-cycles and the Lefschetz standard conjecture in degree $2$.
Given a smooth projective variety $X$, we define as usual
$$F^1{\rm CH}_0(X)={\rm CH}_0(X)_{hom},\,F^2{\rm CH}_0(X):={\rm Ker}\,({\rm alb}_X: {\rm CH}_0(X)_{hom}\rightarrow ({\rm Alb}\,X)_\mathbb{Q}).$$
Several proposals have been made in order to construct
$F^3{\rm CH}_0(X)\subset F^2{\rm CH}_0(X)$.
In \cite{saito}, Sh. Saito  consider a subgroup that we will denote by
$F^3_{dR}{\rm CH}_0(X)$ and is (essentially) defined as follows.

\begin{eqnarray}
\label{F3dR}  F^3_{dR}{\rm CH}_0(X)=\langle {\rm Im}\,(\Gamma_*:F^2{\rm CH}_0(Y)\rightarrow {\rm CH}_0(X))\\
\nonumber
{\rm for\,\,all}\,\,\Gamma\,\,{\rm such\,\,that}\,\,0=\Gamma^*:H^{2,0}(X)\rightarrow H^{2,0}(Y)\rangle.
\end{eqnarray}
\begin{rema} \label{remapourseclef}{\rm  In this definition, $\Gamma\in {\rm CH}^{{\rm dim}\,X}(Y\times X)$ and all smooth projective varieties $Y$ are considered, but  one can also restrict to  surfaces
$Y$  by the following lemma.}
\end{rema}
\begin{lemm} \label{lepourseclef} Let $Y$ be a smooth projective variety. Then
$$F^2{\rm CH}_0(Y)=\langle {\rm Im}\,(\Gamma_*:F^2{\rm CH}_0(Z)\rightarrow {\rm CH}_0(X)),\,\,{\rm where}\,\,Z\,\,{\rm is\,\,a\,\,surface}\rangle.$$
\end{lemm}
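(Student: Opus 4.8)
The plan is to prove the stronger assertion that every class $z\in F^2\mathrm{CH}_0(Y)$ is already of the form $i_*w$ for a \emph{single} smooth complete intersection surface $i\colon S\hookrightarrow Y$ and a class $w\in F^2\mathrm{CH}_0(S)$. Since the graph of such an $i$ is a correspondence in $\mathrm{CH}^{\dim Y}(S\times Y)$ and $i_*=[\Gamma_i]_*$, this gives the inclusion $\subseteq$; the reverse inclusion is clear, because $\Gamma_*$ sends homologically- and Albanese-trivial cycles to homologically- and Albanese-trivial cycles. I may assume $N:=\dim Y\geq 3$: for $N\leq 1$ one has $F^2\mathrm{CH}_0(Y)=0$, and for $N=2$ one takes $S=Y$.

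The first step is to lift $z$ to a surface. Writing $z$ as a $\mathbb{Q}$-combination of finitely many closed points with support $\Sigma_0$, I choose a sufficiently ample line bundle $L$ and cut $S$ out by $N-2$ divisors $H_1,\dots,H_{N-2}\in|L|$ all passing through $\Sigma_0$, arranging that $S=H_1\cap\cdots\cap H_{N-2}$ is a smooth connected surface containing $\Sigma_0$. Then $z=i_*w$ for the evident degree-$0$ class $w\in\mathrm{CH}_0(S)$ with the same multiplicities, where $i\colon S\hookrightarrow Y$.

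The conceptual heart is the control of the Albanese. Because $S$ is a smooth complete intersection of ample divisors, the Lefschetz hyperplane theorem gives an isomorphism $H^1(Y,\mathbb{Q})\xrightarrow{\sim} H^1(S,\mathbb{Q})$, hence $i_*\colon \mathrm{Alb}(S)_{\mathbb{Q}}\to\mathrm{Alb}(Y)_{\mathbb{Q}}$ is an isomorphism. By covariant functoriality of the Albanese map, $i_*\,\mathrm{alb}_S(w)=\mathrm{alb}_Y(i_*w)=\mathrm{alb}_Y(z)=0$, and injectivity of $i_*$ on Albaneses then forces $\mathrm{alb}_S(w)=0$, that is, $w\in F^2\mathrm{CH}_0(S)$. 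It is essential here that we work with $\mathbb{Q}$-coefficients, as $i_*$ is only an isogeny integrally.

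I expect the only real technical obstacle to be the Bertini step, since one must pass a \emph{smooth} surface through the \emph{prescribed} points $\Sigma_0$, whereas a general member of $|\mathcal{I}_{\Sigma_0}\otimes L|$ is a priori smooth only away from its base locus $\Sigma_0$. This is overcome by taking $L$ ample enough that $\mathcal{I}_{\Sigma_0}\otimes L$ is globally generated and separates tangent directions along $\Sigma_0$ (Serre vanishing), so that the $H_i$ may be chosen with linearly independent differentials at each point of $\Sigma_0$ and with smooth intermediate intersections; then $S$ is smooth including at $\Sigma_0$. I would also flag, as motivation for the specific choice of $S$, that the naive idea of correcting a possible discrepancy $\mathrm{alb}_S(w)\neq 0$ by subtracting a cycle supported on a curve $C\subset S$ fails in general: when $\mathrm{Alb}(Y)=0$, for instance when $Y$ is a K3 surface, nonzero curve-cycles are not detected by $\mathrm{alb}_Y$. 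This is exactly why the argument must instead force $i_*$ to be an isomorphism on Albaneses via Lefschetz, so that no correction is needed at all.
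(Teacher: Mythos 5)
Your proof is correct and follows essentially the same route as the paper: pass a smooth complete intersection surface through the support of the cycle and use the Lefschetz hyperplane theorem to identify the Albanese varieties, so that Albanese-triviality descends to the surface. The paper's proof is a three-line version of exactly this argument; your additional care with the Bertini step and the reverse inclusion fills in details the paper leaves implicit.
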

\begin{proof} Indeed, we can assume ${\rm dim}\,Y\geq 2$. Any $0$-cycle $z$ of $Y$ is supported on a surface
$Z\subset Y$ which is a smooth complete intersection of ample hypersurfaces. By Lefschetz theorem on hyperplane sections, ${\rm Alb}\,Z={\rm Alb}\,Y$, so if
${\rm alb}_Y(z)=0$, ${\rm alb}_Z(z)=0$.
\end{proof}
The definition of $F^3_{dR}{\rm CH}_0$ given in (\ref{F3dR}) is essentially motivated by the axioms of the Bloch-Beilinson filtration
(see \cite{bloch}). Indeed,
suppose $Y$ is a surface, and $\Gamma\in {\rm CH}^{{\rm dim}\,X}(Y\times X)$ is a correspondence  satisfying  the property that
$\Gamma^*:H^{2,0}(X)\rightarrow H^{2,0}(Y)$ is zero. Then
one gets by K\"unneth decomposition on the surface  $Y$ (see \cite{murre}) that the class $[\Gamma]$ of $\Gamma$  decomposes as
\begin{eqnarray}\label{eqdecompdu7sep}[\Gamma]=\sum_i[\Gamma_i\times \Gamma'_i]+[R_1]+[R_2]\end{eqnarray} for some cycles
$\Gamma_i$, resp. $\Gamma'_i$ in $Y$, resp. $X$, such that ${\rm codim}\,\Gamma_i+{\rm codim}\,\Gamma'_i=2$, where the cycle $R_1$ is supported on $C_1\times X$ for a curve $C_1\subset Y$, and the cycle $R_2$ is supported on $Y\times C_2$ for a curve $C_2\subset X$.
A decomposable cycle $\sum_i\Gamma_i\times \Gamma'_i$ and a cycle $R_1$ supported on $C_1\times X$ act trivially on $0$-cycles of degree $0$; the cycle $R_2$ acts trivially on $0$-cycles of $Y$ which are of degree $0$ and annihilated by ${\rm alb}_Y$, that is, which belong to $F^2{\rm CH}_0(Y)$. Hence we get
$$\Gamma_*=(\Gamma-\sum_i\Gamma_i\times \Gamma'_i-R_1-R_2)_*: F^2{\rm CH}_0(Y)\rightarrow F^2{\rm CH}_0(X).$$
As the cycle $\Gamma-\sum_i\Gamma_i\times \Gamma'_i-R_1-R_2$ is cohomologous to $0$ by (\ref{eqdecompdu7sep}), the image $\Gamma_*(F^2{\rm CH}_0(Y))$ must be contained in the $F^3$ level of the Bloch-Beilinson filtration.

We now turn to a different  definition, introduced in \cite{voisinanali}, also a candidate for the third level of the Bloch-Beilinson filtration.

\begin{eqnarray}
\label{F3BB}  F^3_{BB}{\rm CH}_0(X):=\cap_{\Sigma,\Gamma} {\rm Ker}\,(\Gamma_*:{\rm CH}_0(X)\rightarrow {\rm CH}_0(\Sigma)),
\end{eqnarray}
 where the intersection is over all smooth projective surfaces $\Sigma$ and  correspondences $\Gamma\in{\rm CH}^2(X\times \Sigma)$.
Again, this definition is dictated by the Bloch-Beilinson axioms since $\Gamma_*$ should preserve the Bloch-Beilinson filtration and for a Bloch-Beilinson filtration $F$, $F^3{\rm CH}_0(\Sigma)=0$ for any smooth projective surface.
\begin{rema}{\rm \label{remanewpoursurf}  It is obvious from the definition that $F^3_{BB}{\rm CH}_0(S)=0$ for a smooth projective surface $S$.}
\end{rema}

Let us recall the statement of  the Bloch conjecture for $0$-cycles on a surfaces (see \cite{bloch}):
\begin{conj} \label{conjbloch}(Bloch) Let $\Gamma\in{\rm CH}^2(S\times T)$ where $T$ and $S$ are smooth projective surfaces.
Then if $\Gamma^*:H^{2,0}(T)\rightarrow H^{2,0}(S)$ is zero, $\Gamma_*:F^2{\rm CH}_0(S)\rightarrow {\rm CH}_0(T)$ is zero.
\end{conj}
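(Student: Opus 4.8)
The statement is Bloch's conjecture on the kernel of the Albanese map, which is open in general; accordingly any ``proof'' can only be conditional or restricted to special classes of surfaces, so I will indicate the natural strategy together with the precise point at which the difficulty concentrates. The cohomological hypothesis $\Gamma^*=0$ on $H^{2,0}(T)$ is the correct numerical shadow of the desired vanishing: by the projection formula, for $z\in F^2{\rm CH}_0(S)$ and $\omega\in H^{2,0}(T)$ the infinitesimal invariant of $\Gamma_*z$ is controlled by the pairing $\langle z,\Gamma^*\omega\rangle$, so the assumption kills exactly the holomorphic $2$-form obstruction to the triviality of $\Gamma_*z$. The entire content of the conjecture is that this is the \emph{only} obstruction.

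The cleanest conditional argument I would give proceeds through the (conjectural) Bloch--Beilinson filtration $F^\bullet$. First I would use that $\Gamma_*$ is compatible with $F^\bullet$, so that it induces maps on the graded pieces ${\rm Gr}^i_F$. Next comes the key axiom: the map induced by a correspondence on ${\rm Gr}^i_F$ depends only on the cohomology class $[\Gamma]$, and the action on ${\rm Gr}^2_F{\rm CH}_0$ of a surface is governed by the action of $[\Gamma]^*$ on the transcendental part of $H^2$, i.e.\ on $H^{2,0}$. Since $\Gamma^*=0$ on $H^{2,0}(T)$, the induced map on ${\rm Gr}^2_F$ vanishes, whence $\Gamma_*\bigl(F^2{\rm CH}_0(S)\bigr)\subset F^3{\rm CH}_0(T)$. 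For a surface $T$ one has $F^3{\rm CH}_0(T)=0$ (the filtration has length $\le\dim T+1=3$, cf.\ Remark \ref{remanewpoursurf}), and the conclusion follows.

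The unconditional route I would pursue in special cases is via finite-dimensionality of motives in the sense of Kimura--O'Sullivan. The forward implication---that a nonzero action on $0$-cycles forces a nonzero action on $H^{2,0}$---is classical (Mumford, Roitman, Bloch--Srinivas); the conjecture is its converse. Granting finite-dimensionality of the Chow motives of $S$ and $T$, I would pass to the refined Chow--K\"unneth decomposition and isolate the transcendental motives $t_2(S),\,t_2(T)$, which carry $F^2{\rm CH}_0$ together with the transcendental cohomology. The hypothesis $\Gamma^*=0$ on $H^{2,0}$ forces the transcendental component of $\Gamma$ to act as zero on $H^2_{tr}$, hence to be numerically trivial; finite-dimensionality then makes it nilpotent, and a standard nilpotence argument yields the vanishing of its action on ${\rm CH}_0$, i.e.\ $\Gamma_*=0$ on $F^2{\rm CH}_0(S)$.

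The main obstacle is precisely the input that makes this a conjecture rather than a theorem. In the filtration approach, the compatibility of ${\rm Gr}^2_F{\rm CH}_0$ with $H^{2,0}$ is essentially equivalent to the statement itself, so the argument is circular unless one already has a Bloch--Beilinson filtration with the expected properties. In the motivic approach, one does not know finite-dimensionality for an arbitrary surface of general type. Thus the honest status is that the argument can be completed only when the transcendental motive is known to be finite-dimensional---for instance for surfaces not of general type, or for the families of $p_g=0$ surfaces where Bloch's conjecture has been verified---while the general case remains open.
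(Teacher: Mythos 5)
You were asked to prove a statement that the paper itself does not prove: Conjecture \ref{conjbloch} is Bloch's conjecture for $0$-cycles on surfaces, which is stated in the paper as an open conjecture and used only as a hypothesis (for instance in Proposition \ref{propbloch}, which shows that the inclusion $F^3_{dR}{\rm CH}_0(X)\subset F^3_{BB}{\rm CH}_0(X)$ for all $X$ is \emph{equivalent} to this conjecture). Your decision not to claim a proof is therefore the correct response, and your diagnosis of where the difficulty sits is accurate. In particular, your observation that the Bloch--Beilinson filtration route is circular --- because the compatibility of ${\rm Gr}^2_F{\rm CH}_0$ with $H^{2,0}$ is essentially the statement to be proved --- is exactly mirrored in the paper: the discussion following (\ref{F3dR}) shows via the Murre/K\"unneth decomposition of $[\Gamma]$ that the hypothesis $\Gamma^*=0$ on $H^{2,0}(T)$ forces $\Gamma_*(F^2{\rm CH}_0(S))$ into the conjectural $F^3$ level, and Remark \ref{remanewpoursurf} supplies $F^3_{BB}{\rm CH}_0(T)=0$ for a surface; what is missing, and what the conjecture asserts, is that a cycle cohomologous to a sum of degenerate terms actually acts as zero on $F^2{\rm CH}_0(S)$ rather than merely landing in a conjecturally trivial subgroup. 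Your alternative route via Kimura--O'Sullivan finite-dimensionality and the transcendental motive $t_2$ is likewise the standard conditional argument and is correctly qualified. There is nothing to correct; just be aware that for the purposes of this paper the statement functions as an axiom to be assumed, not a result to be established.

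One minor point of precision: the "forward implication" you call classical (Mumford, Bloch--Srinivas) is the statement that $\Gamma_*\neq 0$ on ${\rm CH}_0$ in a suitably strong sense forces $\Gamma^*\neq 0$ on $H^{2,0}$; as stated it requires the decomposition-of-the-diagonal argument applied to the correspondence, and the conclusion one extracts is about the transcendental part of $H^2$, which for the pairing with $F^2{\rm CH}_0$ is indeed governed by $H^{2,0}$. Your phrasing is fine, but the paper's own use of this direction is packaged through Proposition \ref{procharles} and the Leray-filtration argument in the proof of Theorem \ref{propeqlef}, which you may find a cleaner reference than the infinitesimal-invariant language you invoke.
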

We now have the following implications.
\begin{prop}  \label{propbloch} The inclusion $F^3_{dR}{\rm CH}_0(X)\subset F^3_{BB}{\rm CH}_0(X)$ holds for any $X$ if and only if the Bloch conjecture \ref{conjbloch} holds for $0$-cycles on surfaces.
\end{prop}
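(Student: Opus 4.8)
The plan is to reduce everything to the formal bookkeeping of composition of correspondences, together with the two structural facts already recorded: that $F^3_{dR}$ may be generated using only surface sources (Remark \ref{remapourseclef} and Lemma \ref{lepourseclef}), and that $F^3_{BB}{\rm CH}_0(\Sigma)=0$ whenever $\Sigma$ is a surface (Remark \ref{remanewpoursurf}). The key compatibility I will use repeatedly is that for correspondences $\Gamma\in {\rm CH}^{\dim X}(Y\times X)$ and $\Gamma'\in {\rm CH}^2(X\times \Sigma)$, the composite $\Gamma'\circ\Gamma$ lies in ${\rm CH}^2(Y\times \Sigma)$, acts on $0$-cycles by $(\Gamma'\circ\Gamma)_*=\Gamma'_*\circ\Gamma_*$, and acts on degree-$2$ cohomology by $(\Gamma'\circ\Gamma)^*=\Gamma^*\circ\Gamma'^*$ as a morphism of Hodge structures. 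In particular $(\Gamma'\circ\Gamma)^*$ vanishes on $H^{2,0}(\Sigma)$ as soon as $\Gamma^*$ vanishes on $H^{2,0}(X)$, since the factorization reads $H^{2,0}(\Sigma)\xrightarrow{\Gamma'^*}H^{2,0}(X)\xrightarrow{\Gamma^*}H^{2,0}(Y)$.

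For the direction asserting that Bloch's conjecture implies the inclusion, I would argue as follows. Since $F^3_{BB}{\rm CH}_0(X)$ is a subgroup, it suffices to show that every generator of $F^3_{dR}{\rm CH}_0(X)$ lies in it; and by the surface reduction one may take a generator of the form $z=\Gamma_*(w)$ with $Y$ a smooth projective surface, $\Gamma\in {\rm CH}^{\dim X}(Y\times X)$ satisfying $\Gamma^*=0$ on $H^{2,0}(X)$, and $w\in F^2{\rm CH}_0(Y)$. To check $z\in F^3_{BB}{\rm CH}_0(X)$, fix an arbitrary surface $\Sigma$ and $\Gamma'\in {\rm CH}^2(X\times\Sigma)$; then $\Gamma'_*(z)=(\Gamma'\circ\Gamma)_*(w)$, where $\Gamma'\circ\Gamma\in {\rm CH}^2(Y\times\Sigma)$ is a correspondence between the two surfaces $Y$ and $\Sigma$ with $(\Gamma'\circ\Gamma)^*=0$ on $H^{2,0}(\Sigma)$ by the compatibility above. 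Applying the Bloch conjecture \ref{conjbloch} with $S=Y$, $T=\Sigma$ yields $(\Gamma'\circ\Gamma)_*=0$ on $F^2{\rm CH}_0(Y)$, hence $\Gamma'_*(z)=0$. As $\Sigma$ and $\Gamma'$ were arbitrary, $z\in F^3_{BB}{\rm CH}_0(X)$.

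For the converse, assuming the inclusion holds for every $X$, I would simply specialize $X$ to a surface. Let $\Gamma\in {\rm CH}^2(S\times T)$ be as in Conjecture \ref{conjbloch}, with $S$, $T$ surfaces and $\Gamma^*=0$ on $H^{2,0}(T)$. Viewing $\Gamma$ as a correspondence with target the surface $X:=T$, the very definition (\ref{F3dR}) gives $\Gamma_*(F^2{\rm CH}_0(S))\subset F^3_{dR}{\rm CH}_0(T)$. By the assumed inclusion this is contained in $F^3_{BB}{\rm CH}_0(T)$, which vanishes because $T$ is a surface (Remark \ref{remanewpoursurf}). Therefore $\Gamma_*=0$ on $F^2{\rm CH}_0(S)$, which is exactly the conclusion of Bloch's conjecture.

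The argument is entirely formal, so there is no genuine analytic or geometric obstacle; the only points requiring care are the codimension bookkeeping showing that $\Gamma'\circ\Gamma$ lands in ${\rm CH}^2(Y\times\Sigma)$, and the verification that the Chow-theoretic and cohomological composition laws are compatible and respect Hodge type, so that the vanishing of $\Gamma^*$ on $H^{2,0}(X)$ really propagates to the vanishing of $(\Gamma'\circ\Gamma)^*$ on $H^{2,0}(\Sigma)$.
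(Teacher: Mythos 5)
Your proof is correct and follows essentially the same route as the paper: both directions rest on the composition $\Gamma'\circ\Gamma\in{\rm CH}^2(Y\times\Sigma)$ and the vanishing $(\Gamma'\circ\Gamma)^*=\Gamma^*\circ{\Gamma'}^*=0$ on $H^{2,0}(\Sigma)$ for the forward implication, and on specializing $X$ to a surface together with $F^3_{BB}{\rm CH}_0(T)=0$ for the converse. The only difference is that you make explicit the reduction to generators and the codimension bookkeeping, which the paper leaves implicit.
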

\begin{proof}  Using Remark \ref{remapourseclef}, $F^3_{dR}{\rm CH}_0(X)$ is generated by cycles $\Gamma_*z$, where
 $S$ is  a smooth projective surface, $\Gamma\in {\rm CH}^{{\rm dim}\,X}(S\times X)$
 has the property that
 $\Gamma^*:H^{2,0}(X)\rightarrow H^{2,0}(S)$ is zero, and $z\in F^2{\rm CH}_0(\Sigma)$. Let $w=\Gamma_*z$ be such a cycle and let now
 $\Gamma'\in {\rm CH}^2(X\times \Sigma)$ be a correspondence, where $\Sigma$ is  a smooth projective surface. Then
 $\Gamma'_*w=(\Gamma'\circ \Gamma)_*z$ and the correspondence
 $$\Gamma'\circ \Gamma\in {\rm CH}^2(S\times \Sigma)$$ has the property that
$(\Gamma'\circ \Gamma)^*:H^{2,0}(\Sigma)\rightarrow H^{2,0}(S)$ is zero. Hence if Bloch conjecture \ref{conjbloch} holds,
$(\Gamma'\circ \Gamma)_*z=0$ in ${\rm CH}_0(\Sigma)$. Hence $w\in F^3_{BB}{\rm CH}_0(X)$.

 Conversely, assume the inclusion $F^3_{dR}{\rm CH}_0(X)\subset F^3_{BB}{\rm CH}_0(X)$ for any smooth projective $X$. Let $S,\,T$ be smooth projective surfaces and let $\Gamma\in{\rm CH}^2(S\times T)$ such that $\Gamma^*: H^{2,0}(T)\rightarrow H^{2,0}(S)$ vanishes. Then by definition,
$$\Gamma_*(F^2{\rm CH}_0(S))\subset F^3_{dR}{\rm CH}_0(T)$$
and $F^3_{dR}{\rm CH}_0(T)\subset F^3_{BB}{\rm CH}_0(T)$ by our assumption. As $ F^3_{BB}{\rm CH}_0(T)=0$ by Remark \ref{remanewpoursurf}, we get $\Gamma_*(F^2{\rm CH}_0(S))=0$.
 \end{proof}
\begin{prop} \label{proplef}  The inclusion $ F^3_{BB}{\rm CH}_0(X) \subset F^3_{dR}{\rm CH}_0(X)$ holds for  $X$ if $X$ satisfies the Lefschetz standard conjecture for degree $2$ cohomology.
\end{prop}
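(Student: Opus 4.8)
The plan is to repackage the Lefschetz standard conjecture into a cycle-level decomposition of the diagonal that separates the contribution of $H^{2,0}$ from a piece factoring through a surface. Write $\dim X=2n$ and let $\mathcal{Z}_{\rm lef}\in {\rm CH}^2(X\times X)$ be the Lefschetz cycle supplied by the hypothesis, so that $[\mathcal{Z}_{\rm lef}]^*:H^{4n-2}(X,\mathbb{Q})\to H^2(X,\mathbb{Q})$ inverts $h_X^{2n-2}\cup$. I would fix a smooth surface $j:\Sigma\hookrightarrow X$ that is a complete intersection of $2n-2$ members of $|H_X|$, so that $[\Sigma]=h_X^{2n-2}$ and hence $j_*\circ j^*=h_X^{2n-2}\cup$ on $H^2(X,\mathbb{Q})$, and set $Z_\Sigma:=({\rm id}_X\times j)^*\mathcal{Z}_{\rm lef}\in {\rm CH}^2(X\times\Sigma)$, which is one of the correspondences defining $F^3_{BB}{\rm CH}_0(X)$.

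The first and crucial step is the cohomological identity $[Z_\Sigma]^*\circ j^*={\rm id}$ on $H^2(X,\mathbb{Q})$. Indeed, an elementary base-change/projection-formula computation along the Cartesian square relating $X\times\Sigma$ and $X\times X$ shows that $[Z_\Sigma]^*=[\mathcal{Z}_{\rm lef}]^*\circ j_*$ on $H^2(\Sigma,\mathbb{Q})$; composing with $j^*$ and using $j_*\circ j^*=h_X^{2n-2}\cup$ gives
$$[Z_\Sigma]^*\circ j^*=[\mathcal{Z}_{\rm lef}]^*\circ(h_X^{2n-2}\cup)={\rm id}\quad\text{on }H^2(X,\mathbb{Q}),$$
the last equality being exactly the defining property of $\mathcal{Z}_{\rm lef}$.

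Next I would form the self-correspondences $P:=\Gamma_j\circ Z_\Sigma\in {\rm CH}^{2n}(X\times X)$, where $\Gamma_j\in{\rm CH}^{2n}(\Sigma\times X)$ is the graph of $j$, and $Q:=\Delta_X-P$. By functoriality of the $*$-action, $P_*=j_*\circ (Z_\Sigma)_*$ factors on zero-cycles as ${\rm CH}_0(X)\xrightarrow{(Z_\Sigma)_*}{\rm CH}_0(\Sigma)\xrightarrow{j_*}{\rm CH}_0(X)$, while $P^*=[Z_\Sigma]^*\circ j^*={\rm id}$ on $H^2(X,\mathbb{Q})$ by the identity above; consequently $Q^*=\Delta_X^*-P^*=0$ on $H^2(X,\mathbb{Q})$, in particular $Q^*|_{H^{2,0}(X)}=0$. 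Granting $F^3_{BB}{\rm CH}_0(X)\subset F^2{\rm CH}_0(X)$, take $w\in F^3_{BB}{\rm CH}_0(X)$. Since $w$ is annihilated by $(Z_\Sigma)_*$, we get $P_*w=j_*(0)=0$, hence $w=(\Delta_X)_*w=Q_*w$. As $Q\in {\rm CH}^{2n}(X\times X)={\rm CH}^{\dim X}(X\times X)$ satisfies $Q^*|_{H^{2,0}(X)}=0$ and $w\in F^2{\rm CH}_0(X)$, taking $Y=X$ in (\ref{F3dR}) exhibits $w=Q_*w\in F^3_{dR}{\rm CH}_0(X)$, which is the desired inclusion.

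It remains to justify the preliminary containment $F^3_{BB}{\rm CH}_0(X)\subset F^2{\rm CH}_0(X)$, used to apply (\ref{F3dR}) with $Y=X$. Vanishing of the degree is immediate by testing against $\Gamma=X\times\{{\rm pt}\}\in {\rm CH}^2(X\times\Sigma)$. For the Albanese, choose an ample complete-intersection curve $C\subset X$; then $j_{C*}:{\rm Jac}(C)\to{\rm Alb}(X)$ is surjective and, by Poincar\'e reducibility over $\mathbb{Q}$, admits a section $s$, realized by a divisorial correspondence $\gamma\in {\rm CH}^1(X\times C)$. Embedding $C$ into a surface $\Sigma'$ via $\iota$, the composite $\Gamma_\iota\circ\gamma\in {\rm CH}^2(X\times\Sigma')$ kills $w\in F^3_{BB}$; applying ${\rm alb}_{\Sigma'}$, the functoriality of the Albanese and the injectivity of $\iota_*$ force $s({\rm alb}_X(w))=0$, whence ${\rm alb}_X(w)=j_{C*}s({\rm alb}_X(w))=0$. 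I expect the genuine obstacle to be the cohomological identity of the second paragraph, which is the precise mechanism converting the Lefschetz standard conjecture into the statement that $P$ acts as the identity on $H^{2,0}$ while factoring through a surface; once it is in place the decomposition $\Delta_X=P+Q$ and the conclusion are formal, and the containment $F^3_{BB}\subset F^2$ is routine.
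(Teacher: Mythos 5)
Your argument is correct and follows essentially the same route as the paper: both decompose $\Delta_X$ as a correspondence $P=\Gamma'\circ\Gamma$ factoring through a surface and acting as the identity on $H^{2,0}(X)$, plus a remainder $Q$ killing $H^{2,0}(X)$, then use $P_*w=0$ for $w\in F^3_{BB}{\rm CH}_0(X)$ to conclude $w=Q_*w\in F^3_{dR}{\rm CH}_0(X)$. The only difference is that you make the pair $(\Gamma,\Gamma')=(Z_\Sigma,\Gamma_j)$ explicit via the restricted Lefschetz cycle and verify the routine containment $F^3_{BB}\subset F^2$, both of which the paper leaves implicit.
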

\begin{proof} Assuming the Lefschetz standard conjecture for degree $2$ cohomology on $X$, there exist
a surface $\Sigma$ and correspondences
$\Gamma\in {\rm CH}^2(X\times \Sigma),\,\Gamma'\in {\rm CH}^{{\rm dim}\,X}(\Sigma\times X)$ such that
$\Gamma'\circ \Gamma\in {\rm CH}^{{\rm dim}\,X}(X\times X)$ acts as the identity on $H^{2,0}(X)$.
For any $z\in F^2{\rm CH}_0(X)$,
\begin{eqnarray}
\label{eqzerocycleF3} z-(\Gamma'\circ \Gamma)_*z\in F^3_{dR}{\rm CH}_0(X)
\end{eqnarray}
 since the correspondence
$\Delta_X-\Gamma'\circ \Gamma$ acts trivially on $H^{2,0}(X)$.
Let now $z\in F^3_{BB}{\rm CH}_0(X)$. Then $\Gamma_*z=0$ in ${\rm CH}_0(\Sigma)$, hence
$(\Gamma'\circ \Gamma)_*z=0$ in ${\rm CH}_0(X)$. Hence
$ z=z-(\Gamma'\circ \Gamma)_*z\in F^3_{dR}{\rm CH}_0(X)$ by (\ref{eqzerocycleF3}).
\end{proof}
\begin{theo}\label{propeqlef} The Lefschetz standard conjecture for degree $2$ cohomology holds on $X$ if and only if there exists a
surface $\Sigma\stackrel{j}{\hookrightarrow} X$ such that $j_*:{\rm CH}_0(\Sigma)\rightarrow {\rm CH}_0(X)/F^3_{dR}{\rm CH}_0(X)$ is surjective.
\end{theo}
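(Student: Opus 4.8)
The statement packages the two preceding propositions together with the criterion of Proposition \ref{procharles}, so I would prove the two implications separately. In the forward direction the surface can be taken to be an explicit complete intersection in $X$, while in the converse one exploits the given embedded surface through a decomposition of the diagonal.

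\textbf{Lefschetz $\Rightarrow$ surjectivity.} Assume the Lefschetz standard conjecture in degree $2$ and fix, as in the proof of Proposition \ref{procharles}, a very ample $H_X$ and a smooth surface $j:\Sigma\hookrightarrow X$ that is a complete intersection of $N-2$ members of $|H_X|$, where $N=\dim X$; then $[\Sigma]=h_X^{N-2}$ and $j_*j^*\alpha=h_X^{N-2}\cup\alpha$ on $H^2(X)$. Writing $\mathcal Z\in\mathrm{CH}^2(X\times X)$ for a Lefschetz cycle and ${}^t\Gamma_j\in\mathrm{CH}^N(X\times\Sigma)$ for the transpose of the graph of $j$, I set $\Gamma:={}^t\Gamma_j\circ\mathcal Z\in\mathrm{CH}^2(X\times\Sigma)$, so that $\Gamma_*:\mathrm{CH}_0(X)\to\mathrm{CH}_0(\Sigma)$ and $[\Gamma]^*=[\mathcal Z]^*\circ j_*$ on cohomology. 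Since $[\mathcal Z]^*$ inverts $h_X^{N-2}\cup$, the composite $[\Gamma_j\circ\Gamma]^*=[\Gamma]^*\circ j^*=[\mathcal Z]^*\circ(h_X^{N-2}\cup)$ is the identity on $H^{2,0}(X)$. Hence $\Delta_X-\Gamma_j\circ\Gamma$ kills $H^{2,0}(X)$, and the argument of Proposition \ref{proplef} (equation (\ref{eqzerocycleF3})) gives, for every $z\in F^2\mathrm{CH}_0(X)$, that $z-(\Gamma_j\circ\Gamma)_*z=z-j_*(\Gamma_*z)\in F^3_{dR}\mathrm{CH}_0(X)$; thus $F^2\mathrm{CH}_0(X)$ lies in the image of $j_*$ modulo $F^3_{dR}$. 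Since $F^3_{dR}\subset F^2$, it then suffices to reach the classes outside $F^2$, and here $j_*$ is already onto $\mathrm{CH}_0(X)/F^2\cong\mathbb Q\oplus(\mathrm{Alb}\,X)_\mathbb Q$, because the degree is visibly hit and $\mathrm{Alb}\,\Sigma\to\mathrm{Alb}\,X$ is surjective by the Lefschetz hyperplane theorem.

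\textbf{Surjectivity $\Rightarrow$ Lefschetz.} Conversely, assume $j_*:\mathrm{CH}_0(\Sigma)\to\mathrm{CH}_0(X)/F^3_{dR}$ is onto. The plan is to run a Bloch--Srinivas decomposition of the diagonal refined by the subgroup $F^3_{dR}$. Applying the hypothesis to the class of the generic point and spreading out, I would produce a dense open $U\subset X$ and, after restriction to $U\times X$, a decomposition $\Delta_X=(\mathrm{id}\times j)_*W+F+R$ in $\mathrm{CH}^N(U\times X)$, where $W\in\mathrm{CH}^2(U\times\Sigma)$ is a family of $0$-cycles on $\Sigma$, $R$ is supported on $U\times D$ for a proper closed $D\subsetneq X$, and $F$ is a family of cycles each lying in $F^3_{dR}\mathrm{CH}_0(X)$. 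Taking closures and pulling back $\omega\in H^{2,0}(X)$ by the $*$-action, one has $\Delta_X^*\omega=\omega$ on $U$; the term $R$ contributes a class of positive coniveau (standard for holomorphic forms supported over a divisor), the term $F$ contributes $0$ by Mumford's theorem, since a family of $F^3_{dR}$-cycles has vanishing infinitesimal invariant against holomorphic $2$-forms, and the remaining contribution is $[W]^*(j^*\omega)$, which factors through $H^{2,0}(\Sigma)$. Hence $\omega$ lies in the image of $[W]^*:H^{2,0}(\Sigma)\to H^{2,0}(X)$, so this map is surjective and Proposition \ref{procharles} yields the Lefschetz standard conjecture in degree $2$.

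\textbf{Main obstacle.} The delicate point is entirely in the converse. The subgroup $F^3_{dR}$ is generated by images of infinitely many correspondences coming from varying auxiliary varieties, so spreading the pointwise membership ``$[x]-j_*w_x\in F^3_{dR}$'' into a single algebraic family $F$ over $U$ requires a Noetherian (or countability) argument to bound the correspondences involved. Once $F$ is realized algebraically, the vanishing of its contribution to $H^{2,0}(X)$ must be obtained by applying Mumford's theorem to the defining correspondences $\Gamma$ (using $\Gamma^*\omega=0$) uniformly in the family; this uniformity is the technical heart of the proof. The forward implication, by contrast, is a direct and formal consequence of Propositions \ref{procharles} and \ref{proplef}.
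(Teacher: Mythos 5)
Your forward implication is correct and is essentially the paper's argument: the composite $\Gamma_j\circ\Gamma$ acts as the identity on $H^{2,0}(X)$, so $\Delta_X-\Gamma_j\circ\Gamma$ sends $F^2{\rm CH}_0(X)$ into $F^3_{dR}{\rm CH}_0(X)$ by the very definition (\ref{F3dR}), and the Lefschetz hyperplane theorem handles ${\rm CH}_0(X)/F^2$. The converse, however, is where the content of Theorem \ref{propeqlef} lies, and there your proposal has a genuine gap at exactly the step you flag as the ``technical heart'': you assert that the spread-out family $F$ of $F^3_{dR}$-cycles contributes $0$ to $H^{2,0}(X)$ ``by Mumford's theorem, since a family of $F^3_{dR}$-cycles has vanishing infinitesimal invariant against holomorphic $2$-forms.'' This is not a consequence of Mumford's theorem: the cycles in the family are not rationally trivial, nor supported on a fixed proper subvariety, so no off-the-shelf Mumford/Bloch--Srinivas vanishing applies. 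The statement you need is precisely what must be proved, and proving it requires remembering, through the spreading, the \emph{witnesses} of membership in $F^3_{dR}$ rather than the bare membership.

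Concretely, the paper keeps the data $(Y,\Gamma,z_x)$ with $\Gamma^*=0$ on $H^{2,0}(X)$ and $z_x\in F^2{\rm CH}_0(Y)$, spreads it to a generically finite family $f:\mathcal{Y}\rightarrow X$ of surfaces together with $\Gamma\in{\rm CH}^{{\rm dim}\,X}(\mathcal{Y}\times X)$ and $\mathcal{Z}\in{\rm CH}^2(\mathcal{Y})$ with $\mathcal{Z}_x\in F^2{\rm CH}_0(\mathcal{Y}_x)$, obtaining the decomposition (\ref{eqpourdiag}); the vanishing of $[(f,\Gamma)_*\mathcal{Z}]^*\eta=f_*([\mathcal{Z}]\cup\Gamma^*\eta)$ on $H^{2,0}(X)$ is then a Leray filtration computation: $\mathcal{Z}_x\in F^2{\rm CH}_0(\mathcal{Y}_x)$ forces $[\mathcal{Z}]_{\mid\mathcal{Y}_U}\in L^2H^4$, the hypothesis $\Gamma_x^*\eta=0$ forces $[\Gamma^*\eta]\in L^1H^2$, their product lies in $L^3H^6\subset{\rm Ker}\,f_*$, and one concludes by the injectivity of $H^{2,0}(X)\rightarrow H^2(U,\mathbb{C})$ from \cite{deligne}. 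Note that each of the two conditions contributes one level of the Leray filtration; neither alone suffices, which is why a blanket appeal to an ``infinitesimal invariant'' does not substitute for the computation. (Two smaller points: the spreading itself is the standard Bloch--Srinivas argument over a finite extension of $k(X)$ --- the decomposition of the class of the generic point involves only finitely many correspondences, so no extra Noetherian or countability input is needed beyond the usual one; and your error term should be supported on $D\times X$, i.e.\ over a divisor in the \emph{parameter} factor, which is what makes $[R]^*$ vanish on $H^{2,0}(X)$ after restriction to $U$ --- a cycle supported on $X\times D$ need not act by zero on holomorphic forms.)
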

\begin{proof} The ``only if'' direction follows from the arguments given in the previous proof. Indeed, we proved that the Lefschetz standard conjecture implies  the equality
(\ref{eqzerocycleF3}) for $z\in F^2{\rm CH}_0(X)$, where the cycle  $(\Gamma'\circ \Gamma)_*z$ is supported on a surface
 $S\stackrel{j}{\hookrightarrow} X$, namely, we can take  for $S$ the image  in $X$ of ${\rm Supp}\,\Gamma'$ by the second projection.
 This equality thus says  $ F^2{\rm CH}_0(X)/F^3_{dR}{\rm CH}_0(X)$ is contained in the image of  $j_*:{\rm CH}_0(S)\rightarrow  {\rm CH}_0(X)/F_{dR}^3{\rm CH}_0(X)$. If furthermore $S$ contains an ample complete intersection surface, then the map
 $j_*:{\rm CH}_0(S)\rightarrow  {\rm CH}_0(X)/F^2{\rm CH}_0(X)$ is also surjective because the Lefschetz hyperplane section theorem implies that $j_*:{\rm Alb}\,S\rightarrow {\rm Alb}\,X$ is surjective. Hence
 $j_*:{\rm CH}_0(S)\rightarrow  {\rm CH}_0(X)/F_{dR}^3{\rm CH}_0(X)$ is surjective.

We now prove the reverse implication. Assume that there is a surface $\Sigma\stackrel{j}{\hookrightarrow } X$ such that $j_*:{\rm CH}_0(\Sigma)\rightarrow {\rm CH}_0(X)/F^3_{dR}{\rm CH}_0(X)$ is surjective. Then for a general point
$x\in X$, there exist a smooth projective  surface $Y$,  a correspondence $\Gamma\in{\rm CH}^{{\rm dim}\,X}(Y\times X)$,
such that $\Gamma^*:H^{2,0}(X)\rightarrow H^{2,0}(Y)$ is zero, a $0$-cycle $z_x\in F^2{\rm CH}_0(Y)$  and a $0$-cycle $z'_x\in {\rm CH}_0(\Sigma)$ such that
$x=\Gamma_*(z_x)+z'_x$ in ${\rm CH}_0(X)$.

As it follows from Bloch-Srinivas arguments \cite{blochsrinivas}, these data can be spread over a finite extension of the function
field of $X$, and this produces

(1) a smooth projective variety  $\mathcal{Y}$, a dominant morphism $f: \mathcal{Y}\rightarrow X$ of relative dimension $2$  and a
cycle
$\Gamma \in{\rm CH}^{{\rm dim}\,X}(\mathcal{Y}\times X)$, such that
$\Gamma_x^*:H^{2,0}(X)\rightarrow H^{2,0}(\mathcal{Y}_x)$ is $0$ for a general point $x\in X$;

(2)  two  cycles
$$\mathcal{Z}\in {\rm CH}^{2}(\mathcal{Y}),\,\,\mathcal{Z}'\in {\rm CH}^{2}(X\times \Sigma),$$
such that  $\mathcal{Z}_x\in F^2{\rm CH}_0(\mathcal{Y}_x)$ for a general point $x\in X$, and for some integer $N\not=0$
\begin{eqnarray}
\label{eqpourdiag}  N\Delta_X  =(f,\Gamma)_*( \mathcal{Z})+(Id_X,j)_*\mathcal{Z}'+W\,\,{\rm in}\,\,{\rm CH}^{{\rm dim}\,X}(X\times X)
\end{eqnarray}
where $W\in {\rm CH}^{{\rm dim}\,X}(X\times X)$ is a cycle which is supported on $D\times X$, for some divisor $D\subset X$.
We now examine how the various cycles appearing in (\ref{eqpourdiag}) act on $H^{2,0}(X)$.
One has $[W]^*=0$ on $H^{2,0}(X)$ because $W$ is supported on $D\times X$, for some divisor $D\subset X$.
Next we claim that the cycle $A:=(f,\Gamma)_*( \mathcal{Z})\in {\rm CH}^{{\rm dim}\,X}(X\times X)$ satisfies
 $[ A]^*(\eta)=0$ for any $\eta\in H^{2,0}(X)$. To see this, we note that
\begin{eqnarray}[ A]^*(\eta)= f_*( [\mathcal{Z}]\cup\Gamma^*\eta),\,\,\,\forall \,\, \eta\in H^{2,0}(X).
\end{eqnarray}
 Recalling that $\mathcal{Z}_x\in F^2{\rm CH}_0(\mathcal{Y}_x)$, we get that, at least after restriction to $\mathcal{Y}_U=f^{-1}(U)$ for some   Zariski dense open set
 $U$ of $X$,  $[\mathcal{Z}]_{\mid \mathcal{Y}_U}\in L^2H^4(\mathcal{Y}_U,\mathbb{Q})$, where $L^\bullet$ denotes the Leray filtration associated with the map $f$. On the other hand, as $\Gamma^*\eta_{\mid \mathcal{Y}_x}=0$, we get that
 $[\Gamma^*\eta]\in L^1H^2(\mathcal{Y}_U,\mathbb{C})$. It follows that
 $$ [\mathcal{Z}]\cup\Gamma^*\eta_{\mid \mathcal{Y}_U}\in L^3H^6(\mathcal{Y}_U,\mathbb{C})\subset {\rm Ker}\,(f_*:H^6(\mathcal{Y}_U,\mathbb{C})\rightarrow H^2(U,\mathbb{C})).$$
 Thus the class   $f_*( [\mathcal{Z}]\cup\Gamma^*\eta)$ vanishes in $H^2(U,\mathbb{C})$, hence vanishes in $H^{2,0}(X)$  since the composite map
 $$H^{2,0}(X)\rightarrow H^{2}(X,\mathbb{C})\rightarrow H^{2}(U,\mathbb{C})$$ is injective by coniveau (see \cite{deligne}).
We thus conclude from the claim and  (\ref{eqpourdiag}) that for any $\eta\in H^{2,0}(X)$,
\begin{eqnarray}
\label{eqpourdiagfinale}  N\eta  =[(Id_X,j)_*\mathcal{Z}']^*\eta\,\, {\rm in}\,\,H^{2,0}(X).
\end{eqnarray}
But $[(Id_X,j)_*\mathcal{Z}']^*\eta=[\mathcal{Z}']^*(j^*\eta)$, so (\ref{eqpourdiagfinale}) implies  that
$[\mathcal{Z}']^*: H^{2,0}(\Sigma)\rightarrow H^{2,0}(X)$ is surjective.
Hence $X$ satisfies the Lefschetz standard conjecture in degree $2$ by Proposition \ref{procharles}.
\end{proof}

We conclude this section with the following result that will be used in next section.
\begin{lemm} \label{lepropjoshi} Let $X,\,Y$ be  smooth projective varieties, and let $\Gamma\in {\rm CH}^2(X\times Y)$.
Then for any $z\in F^3_{BB}{\rm CH}_0(X)$, $\Gamma_*z=0$ in ${\rm CH}^2(Y)$.
\end{lemm}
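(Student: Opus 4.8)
The plan is to deduce the vanishing of $\Gamma_*z$ in ${\rm CH}^2(Y)$ from the defining property \eqref{F3BB} of $F^3_{BB}{\rm CH}_0(X)$ by a composition-of-correspondences argument that reduces the target $Y$ to surfaces. First I record the numerology: for a $0$-cycle $z$ and $\Gamma\in{\rm CH}^2(X\times Y)$ one has $\Gamma_*z={\rm pr}_{Y*}(\Gamma\cdot {\rm pr}_X^*z)\in{\rm CH}^2(Y)$, and this class is homologically trivial since $z\in F^1{\rm CH}_0(X)$. The key point is that $F^3_{BB}$ is tailor-made to interact with codimension $2$ correspondences: given any smooth projective surface $\Sigma$ and any $\Psi\in{\rm CH}^{\dim Y}(Y\times\Sigma)$, the composite $\Psi\circ\Gamma$ lies in ${\rm CH}^{2}(X\times\Sigma)$ (its codimension is $2+\dim Y-\dim Y=2$), so by functoriality $\Psi_*(\Gamma_*z)=(\Psi\circ\Gamma)_*z$, and this vanishes in ${\rm CH}_0(\Sigma)$ by the very definition \eqref{F3BB} of $F^3_{BB}{\rm CH}_0(X)$. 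Writing $w:=\Gamma_*z$, I will thereby have shown that $\Psi_*w=0$ in ${\rm CH}_0(\Sigma)$ for every smooth projective surface $\Sigma$ and every $\Psi\in{\rm CH}^{\dim Y}(Y\times\Sigma)$.

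It then remains to establish the detection principle: a class $w\in{\rm CH}^2(Y)$ annihilated by $\Psi_*$ for all such surface correspondences must itself vanish. To set this up I would use Lemma \ref{lepourseclef} to replace $z$ by $j_*\tilde z$ for a smooth complete intersection surface $j\colon S\hookrightarrow X$ carrying its support, with $\tilde z\in F^2{\rm CH}_0(S)$; since $\Gamma\circ\Gamma_j\in{\rm CH}^2(S\times Y)$, this exhibits $w$ as $\Gamma''_*\tilde z$ with $\Gamma''\in{\rm CH}^2(S\times Y)$. Concretely $w=g_*(f^*\tilde z)$, where $Z\subset S\times Y$ is a resolution of the support of $\Gamma''$, with projections $f\colon Z\to S$ and $g\colon Z\to Y$, and $f$ has fibers of dimension $\dim Y-2$; thus $w$ is a combination of members of the algebraic family $\{Z_s\}_{s\in S}$ of codimension $2$ cycles swept out over the surface $S$, weighted by the $0$-cycle $\tilde z\in F^2{\rm CH}_0(S)$. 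The aim is to reduce this to the case of $0$-cycles on a surface, where $F^3_{BB}=0$ by Remark \ref{remanewpoursurf}: via a Bloch--Srinivas spreading-out argument one reads $w$ off the relative $0$-cycle $f^*\tilde z$ over $S$, and one tries to collapse the $(\dim Y-2)$-dimensional fibers of $f$ by slicing $Y$ with ample hypersurfaces, inducting on $\dim Y$ and checking that vanishing against all surface tests persists under slicing, the base case $\dim Y=2$ being immediate since there $\Psi=\Delta_Y$ recovers $w$.

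The main obstacle is precisely this detection/faithfulness step. The engine of the first paragraph is essentially the definition of $F^3_{BB}$ combined with the composition formula, and is routine; the genuine content is that rational equivalence of the positive-dimensional cycle $w=\Gamma_*z$ can be detected by correspondences to surfaces. The difficulty is that for $\dim Y>2$ the cycle $w$ is truly of codimension $2$ with positive-dimensional fibers over $S$, so it cannot be converted into a $0$-cycle on a surface by a single slice; instead one must control, inductively, how the members $Z_s$ sit inside $Y$ and how their mutual rational interrelations are registered by the surface correspondences $\Psi\circ\Gamma''$. Carrying out this reduction — ensuring at each step that homological triviality together with vanishing against all surface correspondences descends to a smooth ample hypersurface section — is where the real work lies, and the homological triviality of $w$ secured in the first paragraph is what makes both the descent and the base of the induction tractable.
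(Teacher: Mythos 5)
Your first paragraph is exactly the paper's first step: composing $\Gamma$ with any correspondence from $Y$ to a surface produces an element of ${\rm CH}^2(X\times\Sigma)$, so the definition (\ref{F3BB}) forces $\Psi_*(\Gamma_*z)=0$ for every surface test $\Psi$; in particular, taking $\Psi$ to be (the transpose of the graph of) the inclusion of a smooth surface $T\subset Y$, one gets $w_{\mid T}=0$ in ${\rm CH}_0(T)$ for every such $T$, where $w:=\Gamma_*z$. That part is fine and is essentially all the paper does by hand.

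The genuine gap is the detection principle you flag yourself: that a class $w\in{\rm CH}^2(Y)$ restricting to zero on every surface must vanish. This is not something you can get by the inductive slicing you sketch. The difficulty is that for a smooth ample hypersurface $Y'\subset Y$ the Gysin restriction ${\rm CH}^2(Y)\rightarrow{\rm CH}^2(Y')$ has no a priori injectivity (there is no Lefschetz-type theorem for Chow groups in this direction), so knowing $w_{\mid Y'}=0$ for all slices does not let you descend, and homological triviality of $w$ does not help: the whole point is that ${\rm CH}^2(Y)_{hom}$ can be huge and is not controlled by cohomological coniveau arguments. The paper closes this gap by quoting a theorem of Joshi (\cite{yoshi}): for $Y$ smooth projective and $w\in{\rm CH}^2(Y)$ with $w_{\mid T}=0$ for every surface $T\subset Y$, one has $w=0$. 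This is a Noether--Lefschetz-type injectivity statement for codimension $2$ cycles restricted to (general complete intersection) surfaces, and its proof rests on nonelementary input special to codimension $2$ (Bloch--Ogus theory and the Merkurjev--Suslin theorem), not on an induction over hyperplane sections. So your argument is incomplete precisely at the step you identify as ``the real work''; the fix is to cite Joshi's theorem rather than to attempt the reduction to surfaces directly.
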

\begin{proof} As $z\in F^3_{BB}{\rm CH}_0(X)$, the cycle $w:=\Gamma_*z\in {\rm CH}^2(Y)$ has the property that
for any surface $T\subset Y$, $w_{\mid T}=0$ in ${\rm CH}_0(T)$. One then applies the following
theorem of Joshi \cite{yoshi}.
\begin{theo} Let $Y$ be  a smooth projective variety and let $w\in {\rm CH}^2(Y)$ be a cycle such that
$w_{\mid T}=0$ for any surface $T\subset Y$. Then $w=0$.
\end{theo}
\end{proof}

\subsection{A  conjecture on $0$-cycles of hyper-K\"{a}hler manifolds  \label{secconjzero}}

We conclude with the formulation of a conjecture concerning the rational equivalence of points on a smooth projective
hyper-K\"ahler manifold $X$. Recall from Section \ref{seclef} the filtration
$$F^{3}_{BB}{\rm CH}_0(X)\subset F^{2}{\rm CH}_0(X):={\rm Ker}\,({\rm alb}_X:{\rm CH}_0(X)_{hom}\rightarrow {\rm Alb}(X)\otimes\mathbb{Q}).
$$
The rational equivalence of points on a hyper-K\"ahler manifold is a very intriguing subject, that started with work of Beauville
and the author \cite{beauvoi},  and  continued in \cite{huybrechtsspherical}, \cite{ogrady}, \cite{voiproc}, \cite{voisinremarks}, and more recently  in
the paper \cite{marian} which establishes the following remarkable result:
\begin{theo} \label{theomarian} Let $\mathcal{M}$ be a smooth projective moduli space of semistable sheaves on a projective  $K3$ surface $S$. Then
two points $[\mathcal{F}],\,[\mathcal{G}]\in \mathcal{M}$ have the same rational equivalence class in $\mathcal{M}$ if and only if
$c_2(\mathcal{F})=c_2(\mathcal{G})$ in ${\rm CH}_0(S)$.
\end{theo}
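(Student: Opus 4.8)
The plan is to prove the two implications by completely different means: the forward implication is a formal consequence of the universal sheaf, while the converse carries all of the geometric content.

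For the ``only if'' direction I would use the (quasi-)universal sheaf $\mathcal{E}$ on $S\times\mathcal{M}$, which exists on a smooth moduli space of stable sheaves, after passing to a quasi-universal family if necessary. Its second Chern class $c_2(\mathcal{E})\in{\rm CH}^2(S\times\mathcal{M})$ --- or, in the quasi-universal case, the corresponding component of $\mathrm{ch}(\mathcal{E})$ --- defines a correspondence whose induced map $\Gamma_*:{\rm CH}_0(\mathcal{M})\rightarrow {\rm CH}_0(S)$ sends a point class $[\mathcal{F}]$ to the restriction $c_2(\mathcal{E})_{\mid S\times\{[\mathcal{F}]\}}$, which equals $c_2(\mathcal{F})$ up to a fixed $0$-cycle and a nonzero rational factor (harmless over $\mathbb{Q}$). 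Since the action of a correspondence factors through rational equivalence, the equality $[\mathcal{F}]=[\mathcal{G}]$ in ${\rm CH}_0(\mathcal{M})$ gives at once $c_2(\mathcal{F})=c_2(\mathcal{G})$ in ${\rm CH}_0(S)$; this step uses no geometry of $\mathcal{M}$.

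For the converse I would exploit the elementary principle that two point classes of $\mathcal{M}$ coincide in ${\rm CH}_0(\mathcal{M})$ once they are joined by a chain of morphisms $\mathbb{P}^1\rightarrow\mathcal{M}$. Starting from $c_2(\mathcal{F})=c_2(\mathcal{G})$ in ${\rm CH}_0(S)$, I write the rational equivalence as $c_2(\mathcal{F})-c_2(\mathcal{G})=\sum_i\big(f_i^{-1}(0)-f_i^{-1}(\infty)\big)$ for rational functions $f_i$ on curves $C_i\subset S$. The key observation is that each pencil $t\mapsto f_i^{-1}(t)$ is a genuine morphism $\mathbb{P}^1\rightarrow S^{(d_i)}$, so after adding a suitable fixed effective $0$-cycle $A$ built from the zero and polar parts of the $f_i$, the cycles $c_2(\mathcal{F})+A$ and $c_2(\mathcal{G})+A$ become connected by an honest chain of rational curves inside symmetric products. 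I would then lift each pencil to $\mathcal{M}$ by a family of Hecke modifications: as $t$ ranges over $\mathbb{P}^1$, the kernels $\ker(\mathcal{F}\twoheadrightarrow \mathcal{O}_{f^{-1}(t)})$ (together with the compensating modifications restoring stability) form a flat family of semistable sheaves of one fixed shifted Mukai vector, i.e.\ a morphism $\mathbb{P}^1\rightarrow\mathcal{M}_A$ into the moduli space with that invariant. Chaining these yields $[\widetilde{\mathcal{F}}]=[\widetilde{\mathcal{G}}]$ in ${\rm CH}_0(\mathcal{M}_A)$, where $\widetilde{\mathcal{F}},\widetilde{\mathcal{G}}$ denote the modifications of $\mathcal{F},\mathcal{G}$ along $A$. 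One may equally reduce first, by a Fourier--Mukai equivalence, to the case $\mathcal{M}=S^{[n]}$, where the same analysis runs in the symmetric product via the Hilbert--Chow morphism, whose fibres are rationally connected.

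The statement thus reduces to a \emph{destabilisation} assertion: the ``add $A$'' operation $\mathcal{M}\rightarrow\mathcal{M}_A$, $[\mathcal{F}]\mapsto[\widetilde{\mathcal{F}}]$, is injective on rational-equivalence classes of points, so that $[\widetilde{\mathcal{F}}]=[\widetilde{\mathcal{G}}]$ forces $[\mathcal{F}]=[\mathcal{G}]$. This is the step I expect to be the main obstacle, and it is where the formal correspondence calculus no longer suffices: the natural candidate is a rational left inverse, the ``remove $A$'' modification $\mathcal{M}_A\dashrightarrow\mathcal{M}$ composing to the identity on a dense open set, but turning this into a genuine left inverse on ${\rm CH}_0$ of point classes requires resolving its indeterminacy and controlling the resulting correction cycles. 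I expect these corrections to be absorbed precisely by the Beauville--Voisin relations for ${\rm CH}_0(S)$ and the conjectural polynomial structure of the Chow ring of hyper-K\"ahler manifolds recalled above (\cite{beauvoi}, \cite{shenvial}): the canonical $0$-cycle $o_S$ swallows the classes of points on rational curves and of intersection points of divisors, and it is this K3-specific rigidity that must upgrade rational equivalence of the $0$-cycle $c_2$ --- which is only witnessed by families over curves of possibly positive genus --- into rational equivalence of the isolated sheaf points, a far stronger conclusion. The subsidiary technical points, namely uniform semistability and flatness along the Hecke families and the verification that the auxiliary cycle $A$ can be removed within a single rational-equivalence class, are where I expect the argument to demand the most care.
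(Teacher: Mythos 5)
The first thing to note is that the paper does not actually prove this statement: Theorem \ref{theomarian} is quoted from Marian--Zhao \cite{marian}, and the only argument given in the text is the remark that the condition is necessary, via the correspondence $c_2(\mathcal{F}_{\rm univ})\in{\rm CH}^2(\mathcal{M}\times S)$. Your treatment of that direction coincides with the paper's remark (with the correct extra care about quasi-universal families, where $c_2(\mathcal{E})_{\mid S\times\{[\mathcal{F}]\}}=\rho\, c_2(\mathcal{F})+\binom{\rho}{2}c_1(\mathcal{F})^2$ and the second term is a fixed class), so that half is fine.

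The converse is where all the content lies, and your proposal does not prove it. The gap is the one you yourself flag: the ``destabilisation assertion'' that $[\widetilde{\mathcal{F}}]=[\widetilde{\mathcal{G}}]$ in ${\rm CH}_0(\mathcal{M}_A)$ forces $[\mathcal{F}]=[\mathcal{G}]$ in ${\rm CH}_0(\mathcal{M})$ is not a technical point to be absorbed by Beauville--Voisin relations; it is essentially equivalent in difficulty to the theorem itself, and no argument is offered for it. There are also problems upstream of that step. A rational function $f_i$ on a curve $C_i\subset S$ does give a morphism $\mathbb{P}^1\rightarrow S^{(d_i)}$, but the proposed lift to $\mathcal{M}_A$ by Hecke modifications $\ker(\mathcal{F}\twoheadrightarrow\mathcal{O}_{f_i^{-1}(t)})$ is not a morphism from $\mathbb{P}^1$: for each $t$ the surjections $\mathcal{F}\rightarrow\mathcal{O}_{f_i^{-1}(t)}$ form a positive-dimensional Quot-scheme fibre with no canonical point, the resulting kernels need not be semistable, and different choices of surjection give non-isomorphic sheaves, so one obtains at best a family over some space dominating $\mathbb{P}^1$ whose image in $\mathcal{M}_A$ must then be shown to connect the two specific points $[\widetilde{\mathcal{F}}]$ and $[\widetilde{\mathcal{G}}]$ by chains of rational curves --- which again presupposes that the Hecke construction is well defined on ${\rm CH}_0$. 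It is instructive to compare with Theorem \ref{theolecasdehilb} of the paper, which is the exact analogue of your strategy in the one case where it can be carried out: there the ``add a fixed cycle'' map $\mu_{z\,*}$ is shown to be injective on ${\rm CH}_0$ by the correspondence identity $\Gamma_{n+1\,*}\circ \mu_{z\,*}=Id+\mu_{z\,*}\circ \Gamma_{n\,*}$ of Lemma \ref{newlemmapourhilb}. No such identity is available for Hecke modifications of sheaves, and the proposed reduction of a general $\mathcal{M}$ to $S^{[n]}$ by a Fourier--Mukai equivalence is itself nontrivial, since such an equivalence identifies derived categories and cohomology rather than points of moduli spaces. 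So the proposal is a plausible outline, but the decisive step --- the actual content of Marian and Zhao's theorem --- is missing.
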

Of course, the condition is necessary, since, assuming for simplicity there is a universal sheaf
$\mathcal{F}_{\rm univ}$ on $\mathcal{M}\times S$, the Chern class
$$c_2(\mathcal{F}_{\rm univ})\in{\rm CH}^2(\mathcal{M}\times S)$$
induces a group morphism ${\rm CH}_0(\mathcal{M})\rightarrow  {\rm CH}_0(S)$ which maps the class of  $[\mathcal{F}]$ to
$c_2(\mathcal{F})$. The other direction is far from obvious and is a very intriguing statement. The following conjecture
is motivated by Theorem \ref{theomarian}.
\begin{conj}\label{conjsurpoints} Let $X$ be a smooth projective manifold whose algebra of holomorphic forms is
generated in degree $\leq 2$ (for example, a projective hyper-K\"ahler manifold). Then for $x,\,y\in X$,
$x=y$ in ${\rm CH}_0(X)$ if and only if $x=y$ in ${\rm CH}_0(X)/F^3_{BB}{\rm CH}_0(X)$.
\end{conj}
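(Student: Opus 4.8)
The two directions are of very unequal difficulty. The ``only if'' direction is trivial: if $x=y$ in ${\rm CH}_0(X)$ their images coincide in every quotient, in particular in ${\rm CH}_0(X)/F^3_{BB}{\rm CH}_0(X)$. All the content lies in the converse, namely that if $z:=x-y$ belongs to $F^3_{BB}{\rm CH}_0(X)$ then already $z=0$ in ${\rm CH}_0(X)$; equivalently, $F^3_{BB}{\rm CH}_0(X)$ contains no nonzero difference of two closed points. Since $F^3_{BB}{\rm CH}_0(X)\subset F^2{\rm CH}_0(X)$, we may assume from the start that $z$ is Albanese-trivial.

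The plan is to exploit Lemma \ref{lepropjoshi} (Joshi's theorem), which already says that $z$ is annihilated by \emph{every} codimension $2$ correspondence: for any smooth projective $Y$ and any $\Gamma\in{\rm CH}^2(X\times Y)$, one has $\Gamma_*z=0$ in ${\rm CH}^2(Y)$. The strategy is then to write the action of the diagonal on $z$ as a sum of contributions, each factoring through such a codimension $2$ correspondence and hence vanishing. Concretely I would seek a decomposition of a multiple of the diagonal of the form $N\Delta_X=Z_{\rm surf}+Z_{\rm div}+R$ in ${\rm CH}^{\dim X}(X\times X)$, where $Z_{\rm surf}$ is supported on $X\times\Sigma$ for a surface $\Sigma$, $Z_{\rm div}$ is supported on $D\times X$ for a divisor $D$, and $R$ is decomposable. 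Having moved representatives of $x,y$ off $D$, the term $Z_{\rm div}$ kills $z$ by support, the decomposable $R$ kills the homologically trivial $z$ by degree, and $Z_{\rm surf}=\iota_{\Sigma*}Z'$ with $Z'\in{\rm CH}^2(X\times\Sigma)$ kills $z$ by Lemma \ref{lepropjoshi}; hence $Nz=N\Delta_{X*}z=0$, i.e. $z=0$.

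The structural heart, and the place where the hypothesis that the algebra of holomorphic forms is generated in degree $\leq 2$ must enter, is the production of such a decomposition. Cohomologically, that hypothesis says that $H^{\geq 3,0}(X)$ is built from wedge products of classes of degree $\leq 2$, so that the transcendental part of $H^*(X)$ relevant to $0$-cycles is ``generated by surfaces''; this is the Hodge-theoretic shadow of a motivic statement asserting that the transcendental motive of $X$ is a summand of the motive of a surface, twisted, together with Tate-type pieces contributing only to the Albanese and lower levels. Granting such a motivic surface decomposition, ${\rm CH}_0(X)$ would be governed by ${\rm CH}_0$ of surfaces through codimension $2$ correspondences (plus pieces harmless for $z\in F^2$), and the diagonal decomposition above would follow. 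This is exactly what Theorem \ref{propeqlef}, via the Lefschetz standard conjecture established in Section \ref{secleftheosproof} in the geometric cases of interest, provides only at the level of $F^2/F^3$; combined with Propositions \ref{propbloch} and \ref{proplef} and Bloch's conjecture for surfaces, it moreover identifies $F^3_{dR}{\rm CH}_0(X)$ with $F^3_{BB}{\rm CH}_0(X)$.

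The main obstacle is precisely the passage from cohomology to Chow groups: going from ``forms generated in degree $\leq 2$'' (a statement about $H^{p,0}$) to an honest identity of cycle classes requires controlling the deeper graded pieces ${\rm Gr}^j_F{\rm CH}_0(X)$ for $j\geq 3$ on differences of points, which the $F^2/F^3$ analysis above does not reach. Here one is forced to invoke the conjectural polynomial relations in the Chow ring of hyper-K\"ahler manifolds (in the spirit of \cite{beauvoi}, \cite{shenvial}), expressing a multiple of the diagonal as a polynomial in the Lefschetz cycle $\mathcal{Z}_{\rm lef}\in{\rm CH}^2(X\times X)$ modulo terms supported on $D\times X$ and $X\times\Sigma$; each monomial of such a polynomial is a composite factoring through a codimension $2$ correspondence and hence annihilates $z$ by Lemma \ref{lepropjoshi}. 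Such relations are known only in special cases --- for Kummer varieties via Beauville's formulas (Proposition \ref{propkummer}) and for certain Lagrangian-fibered examples in \cite{voisinpamq} --- and in general they amount to a motivic incarnation of the Bloch--Beilinson conjecture. The Marian--Zhao Theorem \ref{theomarian} is best read as the statement that, for moduli of sheaves on a $K3$ surface, the single correspondence $c_2(\mathcal{F}_{\rm univ})$ already effects this separation of points, which is the strongest available evidence for the conjecture.
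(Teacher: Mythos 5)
First, note that the statement you are proving is stated in the paper as Conjecture \ref{conjsurpoints}; the paper does not prove it, but only gives a conditional argument (Lefschetz standard conjecture in degree $2$ $+$ generalized Hodge conjecture $+$ nilpotence conjecture imply it), a criterion (Proposition \ref{proppourconj}), and proofs in special cases (Theorem \ref{theomarian}, Proposition \ref{propkummer}, Theorem \ref{theolecasdehilb}). Your proposal is likewise openly conditional, which is acceptable in spirit; the problem is that the specific mechanism you propose cannot work. You ask for a decomposition $N\Delta_X=Z_{\rm surf}+Z_{\rm div}+R$ with $Z_{\rm surf}$ supported on $X\times\Sigma$ for a surface $\Sigma$, $Z_{\rm div}$ supported on $D\times X$, and $R$ decomposable. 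Letting such an identity act on $H^{k,0}(X)$ for $k\geq 3$ (by $[\Gamma]^*\alpha={\rm pr}_{1*}([\Gamma]\cup{\rm pr}_2^*\alpha)$), the term $Z_{\rm div}$ acts by zero since its output is supported on the divisor $D$, the decomposable term has algebraic image, and $Z_{\rm surf}$ factors through $H^{k,0}$ of (a desingularization of) the surface $\Sigma$, which vanishes for $k\geq 3$. Hence the decomposition forces $H^{k,0}(X)=0$ for all $k\geq 3$. For a projective hyper-K\"ahler manifold of dimension $2n\geq 4$ one has $H^{4,0}(X)=\mathbb{C}\sigma_X^2\neq 0$, so no such decomposition exists precisely in the main case the conjecture is about. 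Moreover, if it did exist, your argument would prove the much stronger statement $F^3_{BB}{\rm CH}_0(X)=0$, which the paper's remark following Corollary \ref{corogamma} explicitly flags as \emph{not} expected (the quotient ${\rm CH}_0(X)/F^3_{BB}{\rm CH}_0(X)$ is expected to be only a direct summand of the ${\rm CH}_0$ of a surface). The fact that your strategy would prove too much is itself a signal that it must fail.

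The paper's conditional argument repairs exactly this point. Instead of a single codimension $2$ correspondence, one takes a polynomial $P=\sum_i\mu_i\,\mathcal{Z}_{\rm lef}^i\cdot{\rm pr}_2^*h_X^{2n-2i}$ in the Lefschetz cycle (Lemma \ref{leactionsurautreforme} and Corollary \ref{corogamma}), whose $i$-th monomial is responsible for acting as the identity on $H^{2i,0}(X)=\mathbb{C}\sigma_X^i$; only then does $\Delta_X-P$ have image of Hodge coniveau $\geq 1$, so that the generalized Hodge conjecture produces a correction $\mathcal{Z}'$ supported on $D\times X$ and the nilpotence conjecture makes $(\Delta_X-P-\mathcal{Z}')_*$ nilpotent on ${\rm CH}_0(X)$. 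The reason this only separates \emph{points} rather than killing all of $F^3_{BB}{\rm CH}_0(X)$ is the divisibility observation in the paper's remark: $P_*(x-y)=\sum_i\lambda_i h_X^{2n-2i}(\mathcal{Z}_{{\rm lef},x}^i-\mathcal{Z}_{{\rm lef},y}^i)$ is divisible by $\mathcal{Z}_{{\rm lef},x}-\mathcal{Z}_{{\rm lef},y}$, which vanishes in ${\rm CH}^2(X)$ when $x\equiv y$ modulo $F^3_{BB}$ (by the definition (\ref{F3BB}), or by Lemma \ref{lepropjoshi}); for a general degree-zero cycle $z$, the higher monomials $\mathcal{Z}_{\rm lef}^i$ do not factor through a codimension $2$ correspondence applied to $z$, so $P_*z$ is not controlled by $(\mathcal{Z}_{\rm lef})_*z$. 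Your last paragraph gestures toward these polynomial relations, but your core diagonal decomposition contradicts them, and your appeal to Lemma \ref{lepropjoshi} for each ``monomial factoring through a codimension $2$ correspondence'' is exactly the step that fails for $i\geq 2$.
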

Theorem \ref{theomarian} establishes the conjecture when $X=\mathcal{M}$ is as above a smooth projective  moduli space of sheaves on a $K3$ surface. Indeed, if $[\mathcal{F}]=[\mathcal{G}]$ in
${\rm CH}_0(\mathcal{M})/F^3_{BB}{\rm CH}_0(\mathcal{M})$, then $c_2(\mathcal{F})=c_2(\mathcal{G})$ in ${\rm CH}_0(S)$ by the definition (\ref{F3BB}) of $F^3_{BB}$, and using  the existence of the  correspondence $c_2(\mathcal{F}_{{\rm univ}})$. Thus Theorem \ref{theomarian} gives $[\mathcal{F}]=[\mathcal{G}]$ in
${\rm CH}_0(\mathcal{M})$.

Let us  observe that  the Lefschetz standard conjecture in degree $2$, the generalized Hodge conjecture, and the nilpotence conjecture together  imply
Conjecture \ref{conjsurpoints}. Let us explain this fact assuming, as this is the case when $X$ is hyper-K\"ahler, that ${\rm dim}\,X=2n$ and   the algebra of holomorphic forms is generated in degree $2$, with  $H^{2,0}(X)=\mathbb{C}\sigma_X$. Then
$H^{i,0}(X)=0$ for $i$ odd and $H^{2i,0}(X)$ is generated by $\sigma_X^i$.  Denoting by $h_X\in {\rm NS}(X)_\mathbb{Q}\subset H^2(X,\mathbb{Q})$ a polarizing class on $X$, and assuming the Lefschetz standard conjecture in degree $2$ holds for $X$,
there exists a codimension $2$ cycle $\mathcal{Z}_{\rm lef}\in{\rm CH}^2(X\times X)$ such that
\begin{eqnarray}\label{eqactionsurformesdeg2}[\mathcal{Z}_{\rm lef}]^*(h_X^{2n-2}\sigma_X)=\sigma_X.
\end{eqnarray}
\begin{lemm}\label{leactionsurautreforme} One has for any $i\leq n$,
\begin{eqnarray}\label{eqactionsurformes}   [\mathcal{Z}_{\rm lef}^i]^*h_X^{2n-2i}\sigma_X^i=\lambda_i \sigma_X^i,
\end{eqnarray}
for some nonzero rational number $\lambda_i$.
\end{lemm}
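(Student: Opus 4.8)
The plan is to reduce the statement to the computation of a single scalar. First I would observe that the target is automatically a multiple of $\sigma_X^i$: since $\mathcal{Z}_{\rm lef}^i$ is a codimension $2i$ cycle, $[\mathcal{Z}_{\rm lef}^i]^*$ is a morphism of Hodge structures of type $(2i-2n,2i-2n)$, so applied to the class $h_X^{2n-2i}\sigma_X^i$, which is of type $(2n,2n-2i)$, it produces a class of type $(2i,0)$. As the algebra of holomorphic forms is generated in degree $2$ with $H^{2,0}(X)=\mathbb{C}\sigma_X$, we have $H^{2i,0}(X)=\mathbb{C}\sigma_X^i$, and hence the left-hand side of (\ref{eqactionsurformes}) is automatically of the form $\lambda_i\sigma_X^i$ for a unique $\lambda_i\in\mathbb{C}$. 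The whole content is therefore to compute $\lambda_i$ and to check that it is a nonzero rational number.

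Next I would pin down the relevant piece of $[\mathcal{Z}_{\rm lef}]$. Writing $[\mathcal{Z}_{\rm lef}]\in H^4(X\times X,\mathbb{C})$ in its Hodge--K\"unneth decomposition, only the component in $H^{2,0}(X)\otimes H^{0,2}(X)$ can contribute to an output of type $(2,0)$ after applying ${\rm pr}_{1*}(\,-\,\cup{\rm pr}_2^*(h_X^{2n-2}\sigma_X))$: indeed $h_X^{2n-2}\sigma_X$ is of type $(2n,2n-2)$, so a nonzero contribution forces the first K\"unneth factor to be of type $(2,0)$ and the second to pair nontrivially with $h_X^{2n-2}\sigma_X$, hence to be of type $(0,2)$. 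Writing this component as $c\,\sigma_X\otimes\overline{\sigma_X}$ (a scalar multiple, both Hodge pieces being lines), the normalization (\ref{eqactionsurformesdeg2}) reads $c\int_X h_X^{2n-2}\sigma_X\overline{\sigma_X}=1$, so $c=(\int_X h_X^{2n-2}\sigma_X\overline{\sigma_X})^{-1}$ is well defined and nonzero, the integral being nonzero by the second Hodge--Riemann relations exactly as in the proof of Lemma \ref{lelefcal}.

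The key step is then to compute the $H^{2i,0}(X)\otimes H^{0,2i}(X)$-component of $[\mathcal{Z}_{\rm lef}^i]=[\mathcal{Z}_{\rm lef}]^i$. In the $i$-fold cup product, such a component can only arise from factors each lying in $\bigoplus_{p,q} H^{p,0}(X)\otimes H^{0,q}(X)$, since any positive antiholomorphic degree in a first factor, or holomorphic degree in a second factor, survives in the product and would prevent landing in $H^{2i,0}\otimes H^{0,2i}$. Now comes the point that collapses everything: because $[\mathcal{Z}_{\rm lef}]$ is a Hodge class, i.e.\ of type $(2,2)$ in $H^4(X\times X)$, its components in $H^{4,0}(X)\otimes H^{0,0}(X)$ and in $H^{0,0}(X)\otimes H^{0,4}(X)$ (which would be of type $(4,0)$, resp.\ $(0,4)$) vanish. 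Hence the only nonzero K\"unneth component of $[\mathcal{Z}_{\rm lef}]$ of the form $H^{p,0}\otimes H^{0,q}$ is precisely $c\,\sigma_X\otimes\overline{\sigma_X}$, and the component we want is $(c\,\sigma_X\otimes\overline{\sigma_X})^i=c^i\,\sigma_X^i\otimes\overline{\sigma_X^i}$, nonzero for $i\le n$. Evaluating as before gives $\lambda_i=c^i\int_X h_X^{2n-2i}\sigma_X^i\overline{\sigma_X^i}$.

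Finally I would establish nonvanishing and rationality through the Beauville--Fujiki relation. Polarizing $\int_X\alpha^{2n}=c_Xq(\alpha)^n$ and using $q(\sigma_X)=q(\sigma_X,h_X)=0$ while $q(\sigma_X,\overline{\sigma_X})\neq 0$, only the perfect matchings pairing each $\sigma_X$ with an $\overline{\sigma_X}$ and the remaining $h_X$'s among themselves survive, giving $\int_X h_X^{2n-2i}\sigma_X^i\overline{\sigma_X^i}=c_X\,C(n,i)\,q(\sigma_X,\overline{\sigma_X})^i q(h_X)^{n-i}$ for an explicit positive rational $C(n,i)$; in particular this is nonzero, so $\lambda_i\neq 0$. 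Forming the ratio defining $\lambda_i$, the factors $q(\sigma_X,\overline{\sigma_X})$ cancel and one obtains $\lambda_i=\frac{C(n,i)}{C(n,1)^i}\bigl(\int_X h_X^{2n}\bigr)^{1-i}$, a nonzero rational number. I expect the main (and only genuinely non-formal) obstacle to be the collapsing step of the previous paragraph: everything hinges on the vanishing of the ``wrong'' holomorphic--antiholomorphic K\"unneth components of $[\mathcal{Z}_{\rm lef}]$, forced by its being a Hodge class, which is what makes the $i$-th power reduce to the single clean term $c^i\sigma_X^i\otimes\overline{\sigma_X^i}$ rather than a sum in which cancellations could occur.
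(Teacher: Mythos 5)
Your proof is correct and takes essentially the same route as the paper's: both arguments isolate the Hodge--K\"unneth component of $[\mathcal{Z}_{\rm lef}]$ lying in $H^{2,0}(X)\otimes H^{0,2}(X)$, which is pinned down as $c\,\sigma_X\otimes\overline{\sigma_X}$ by the normalization (\ref{eqactionsurformesdeg2}), and observe that for type reasons only its $i$-th power $c^i\sigma_X^i\otimes\overline{\sigma_X}^i$ contributes to the action on $h_X^{2n-2i}\sigma_X^i$. Your concluding Beauville--Fujiki computation of $\int_X h_X^{2n-2i}\sigma_X^i\overline{\sigma_X}^i$ is a welcome addition, as it makes explicit the nonvanishing and rationality of $\lambda_i$, which the paper leaves implicit.
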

\begin{proof}  We write
\begin{eqnarray}\label{eqkunneth} [\mathcal{Z}_{\rm lef}]=[\mathcal{Z}_{\rm lef}]^{2,2}+[\mathcal{Z}_{\rm lef}]^{4,0}+[\mathcal{Z}_{\rm lef}]^{0,4},
\end{eqnarray}
where $[\mathcal{Z}_{\rm lef}]^{i,j}\in H^i(X,\mathbb{Q})\otimes H^j(X,\mathbb{Q})$ denotes  the $(i,j)$ K\"unneth component of
$[\mathcal{Z}_{\rm lef}]$. Note that the K\"unneth components are algebraic in this case, since $[\mathcal{Z}_{\rm lef}]^{4,0}$, resp.
$[\mathcal{Z}_{\rm lef}]^{0,4}$, identifies to the restriction to $X\times x$, resp. $x\times X$, of $[\mathcal{Z}_{\rm lef}]$, where $x$ is any point of $X$. Furthermore, developing the
$i$-th power of (\ref{eqkunneth}), one sees immediately  that
$$[\mathcal{Z}_{\rm lef}^i]^*(h_X^{2n-2i}\sigma_X^i)=
 (([\mathcal{Z}_{\rm lef}]^{2,2})^i)^*(h_X^{2n-2i}\sigma_X^i).$$
In other words,  we can assume that $[\mathcal{Z}_{\rm lef}]=[\mathcal{Z}_{\rm lef}]^{2,2}$.
Over $\mathbb{C}$, we have the Hodge decomposition
$$H^2(X,\mathbb{C})=\mathbb{C}\sigma_X\oplus H^{1,1}(X)\oplus \overline{\mathbb{C}\sigma_X},$$
and the equation (\ref{eqactionsurformesdeg2}) then  says that
\begin{eqnarray}\label{eqkunnethHodge}[\mathcal{Z}_{\rm lef}]=\lambda ({\rm pr}_1^*\sigma_X\cup  {\rm pr}_2^*\overline {\sigma_X}+{\rm pr}_1^*\overline {\sigma_X}\cup  {\rm pr}_2^*\sigma_X)+R,
\end{eqnarray}
where the residual term $R$ belongs to the term $H^{1,1}(X)\otimes H^{1,1}(X)$ of the Hodge-K\"unneth decomposition of $H^4(X\times X,\mathbb{C})$.
When we take the $i$-th power of (\ref{eqkunnethHodge}), only the term
$\lambda^i ({\rm pr}_1^*\sigma^i\cup  {\rm pr}_2^*\overline {\sigma}^i)$ acts nontrivially on $h_X^{2n-2i}\sigma_X^i$ and it acts on $h_X^{2n-2i}\sigma_X^i$ by multiplication by the coefficient $ \lambda^i\int_Xh_X^{2n-2i}\sigma_X^i\overline{\sigma_X}^i$, which proves the result.
\end{proof}
\begin{coro}\label{corogamma} There exists a polynomial
$$P=\sum_{i=0}^n\mu_i \mathcal{Z}_{\rm lef}^i\cdot {\rm pr}_2^*h_x^{2n-2i}\in{\rm CH}^{2n}(X\times X),$$ such that
\begin{eqnarray}\label{eqpouractiondeGamma}
[P]^*\sigma_X^i=\sigma_X^i
\end{eqnarray}
for $i=0,\ldots, n$.
\end{coro}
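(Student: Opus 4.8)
The plan is to turn the desired identity (\ref{eqpouractiondeGamma}) into a linear system for the $\mu_i$ and to show this system has a unique rational solution. For each $i$ write $\Gamma_i:=\mathcal{Z}_{\rm lef}^i\cdot{\rm pr}_2^*h_X^{2n-2i}\in{\rm CH}^{2n}(X\times X)$, so that $P=\sum_i\mu_i\Gamma_i$ and, by the projection formula, $[\Gamma_i]^*\alpha=[\mathcal{Z}_{\rm lef}^i]^*(h_X^{2n-2i}\alpha)$. As $\Gamma_i$ has codimension $2n=\dim X$, the operator $[\Gamma_i]^*$ is degree-preserving and is a morphism of rational Hodge structures, hence maps $H^{2j,0}(X)=\mathbb{C}\sigma_X^j$ into itself: $[\Gamma_i]^*\sigma_X^j=c_{ij}\sigma_X^j$ for scalars $c_{ij}$. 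Thus $[P]^*\sigma_X^j=(\sum_i\mu_ic_{ij})\sigma_X^j$, and (\ref{eqpouractiondeGamma}) amounts to solving $\sum_{i=0}^n\mu_ic_{ij}=1$ for $j=0,\dots,n$; it suffices to show the matrix $(c_{ij})$ is diagonal with nonzero rational diagonal.

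The main point, and what I expect to be the crux, is to control the K\"unneth components of $[\mathcal{Z}_{\rm lef}]$. I would first reduce to the case where $[\mathcal{Z}_{\rm lef}]$ is of pure type $(2,2)$. As recalled in the proof of Lemma \ref{leactionsurautreforme}, the $(4,0)$ and $(0,4)$ components of $[\mathcal{Z}_{\rm lef}]$ are algebraic, being the classes of the restrictions of $\mathcal{Z}_{\rm lef}$ to $X\times x$ and $x\times X$; they are therefore represented by decomposable cycles $W\times X$ and $X\times W'$. Subtracting these does not affect the defining property (\ref{eqactionsurformesdeg2}), since $(W\times X)^*(h_X^{2n-2}\sigma_X)=[W]\cdot{\rm pr}_{1*}{\rm pr}_2^*(h_X^{2n-2}\sigma_X)$ and $(X\times W')^*(h_X^{2n-2}\sigma_X)=(\int_X[W']h_X^{2n-2}\sigma_X)\cdot 1$ both vanish for degree reasons ($h_X^{2n-2}\sigma_X$ is not of top degree, and $[W']h_X^{2n-2}\sigma_X$ lies beyond top degree). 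Since the corollary only asserts the existence of a Lefschetz cycle together with a polynomial in it, I may replace $\mathcal{Z}_{\rm lef}$ by this pure-$(2,2)$ representative. Without this reduction the off-diagonal entries $c_{ij}$ $(j<i)$ would acquire contributions $\int_X(w')^{i-j}h_X^{2n-2i}\sigma_X^j\overline{\sigma_X}^j$ coming from the $(0,4)$ part $w'=[\mathcal{Z}_{\rm lef}]^{0,4}$, with no reason to be rational.

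With $[\mathcal{Z}_{\rm lef}]=[\mathcal{Z}_{\rm lef}]^{2,2}$ I would compute the $c_{ij}$ from the Hodge decomposition (\ref{eqkunnethHodge}), namely $[\mathcal{Z}_{\rm lef}]=\lambda({\rm pr}_1^*\sigma_X\cup{\rm pr}_2^*\overline{\sigma_X}+{\rm pr}_1^*\overline{\sigma_X}\cup{\rm pr}_2^*\sigma_X)+R$ with $R\in H^{1,1}(X)\otimes H^{1,1}(X)$. In the cup power $[\mathcal{Z}_{\rm lef}]^{\cup i}=\sum u\otimes v$ the only available left-hand factors are $\sigma_X$ (type $(2,0)$), $\overline{\sigma_X}$ (type $(0,2)$) and the left-hand factors of $R$ (type $(1,1)$); for $u$ to have a nonzero $(\bullet,0)$-component every chosen factor must be $\sigma_X$, forcing $u=\sigma_X^i$, which is of type $(2i,0)$. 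Since $[\Gamma_i]^*\sigma_X^j$ is of type $(2j,0)$, it can be nonzero only when $j=i$, so $(c_{ij})$ is diagonal. The diagonal entry comes from the single term $\lambda^i\,\sigma_X^i\otimes\overline{\sigma_X}^i$, which sends $h_X^{2n-2i}\sigma_X^i$ to $\lambda^i(\int_Xh_X^{2n-2i}\sigma_X^i\overline{\sigma_X}^i)\sigma_X^i=\lambda_i\sigma_X^i$; this is exactly the nonzero rational constant $\lambda_i$ of Lemma \ref{leactionsurautreforme} (its rationality following from the Beauville--Fujiki relations, since $q(\sigma_X,\sigma_X)=q(h_X,\sigma_X)=0$ makes the transcendental factor $q(\sigma_X,\overline{\sigma_X})^i$ cancel against the normalisation fixed by (\ref{eqactionsurformesdeg2})). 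Setting $\mu_i:=\lambda_i^{-1}\in\mathbb{Q}$ then gives $[P]^*\sigma_X^j=\mu_j\lambda_j\sigma_X^j=\sigma_X^j$ for all $j$, which is (\ref{eqpouractiondeGamma}).
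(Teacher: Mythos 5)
Your proof is correct and rests on the same two pillars as the paper's argument --- Lemma \ref{leactionsurautreforme} for the diagonal action and linear algebra in the coefficients $\mu_i$ --- but it organizes the off-diagonal terms differently, and the difference is substantive. The paper keeps the given $\mathcal{Z}_{\rm lef}$, notes that $[\Gamma_i]^*\sigma_X^j=\lambda_{ij}\sigma_X^j$ with $\lambda_{ij}=0$ for $j>i$ and $\lambda_{ii}\neq0$, and inverts the resulting triangular matrix. You instead first correct $\mathcal{Z}_{\rm lef}$ by the decomposable cycles $W\times X$ and $X\times W'$ representing its $(4,0)$ and $(0,4)$ K\"unneth components; this is legitimate since, as you check, the normalization (\ref{eqactionsurformesdeg2}) is preserved, and the later application of the corollary only uses that $P$ is a polynomial of the stated form in some codimension $2$ correspondence acting as the identity on $H^{*,0}(X)$. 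After this correction $[\mathcal{Z}_{\rm lef}]^i$ has pure K\"unneth type $(2i,2i)$, the matrix $(c_{ij})$ is genuinely diagonal, and $\mu_i=\lambda_i^{-1}$. What this buys is exactly the point you flag: the rationality of the $\mu_i$, which is needed for $P$ to be a cycle with $\mathbb{Q}$-coefficients, reduces to the rationality of the diagonal entries $\lambda_i$ asserted in Lemma \ref{leactionsurautreforme} (and provable via the Fujiki relations in the hyper-K\"ahler case), whereas in the triangular version one would also have to check that the off-diagonal entries, of the shape $\binom{i}{j}\lambda^j\int_X(w')^{i-j}h_X^{2n-2i}\sigma_X^j\overline{\sigma_X}^j$, are rational --- a point the paper leaves implicit. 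The only step you should make explicit is that the K\"unneth decomposition of $[\mathcal{Z}_{\rm lef}]$ has no $(1,3)$ or $(3,1)$ components; this holds because $H^1(X,\mathbb{Q})=0$ under the standing hypothesis that the holomorphic forms of $X$ are generated in degree $2$, and is already implicit in (\ref{eqkunneth}).
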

\begin{proof} Let $\Gamma_i:=\mathcal{Z}_{\rm lef}^i\cdot {\rm pr}_2^*h_x^{2n-2i}$. We have
$$[\Gamma_i]^* \sigma_X^j=0\,\,{\rm for}\,\, j>i,$$
$$[\Gamma_i]^* \sigma_X^j=\lambda_{ij}\sigma_X^j\,\,{\rm for}\,\, j\leq i,$$
with $\lambda_j\not=0$ when $j=i$ by Lemma \ref{leactionsurautreforme}.
The matrix $M:=(\lambda_{ji})$ is thus invertible, and our condition on the column vector $^t(\mu_0,\ldots,\mu_n)$ is
$$M\begin{pmatrix}\mu_0\\ \vdots\\\mu_n\end{pmatrix}=\begin{pmatrix}1\\
\vdots\\1\end{pmatrix}.$$
\end{proof}
As the cycle
$ P=\sum_{i=0}^n \mathcal{Z}_{\rm lef}^i\cdot {\rm pr}_2^*h_X^{2n-2i} \in{\rm CH}^{2n}(X\times X)$
 acts as the identity on $H^{*,0}(X)$,
the cycle $\Delta_X-P\in{\rm CH}^{2n}(X\times X)$ acts as $0$ on $H^{*,0}(X)$, so that its image
$${\rm Im}\,[\Delta_X-P]^*: H^*(X,\mathbb{Q})\rightarrow H^*(X,\mathbb{Q})$$
has Hodge coniveau $\geq1$. Assuming the generalized Hodge conjecture, there should exist a divisor $D\subset X$ and a
cycle $\mathcal{Z}'\in {\rm CH}^{2n}(X\times X)$ supported on $D\times X$ such that
$$[\Delta_X-P]=[\mathcal{Z}']\,\,{\rm in}\,\,H^{4n}(X\times X,\mathbb{Q}).$$
Equivalently, the cycle $\Delta_X-P-\mathcal{Z}'$ is cohomologous to $0$, and the nilpotence conjecture
\cite{voe} predicts that a power
$(\Delta_X-P-\mathcal{Z}')^{\circ N}$ vanishes in ${\rm CH}^{2n}(X\times X)$, so in particular
the endomorphism

$$\phi:=(\Delta_X-P-\mathcal{Z}')_*:{\rm CH}_0(X)\rightarrow {\rm CH}_0(X)$$
is nilpotent.
We observe that, as $\mathcal{Z}'$ is supported on $D\times X$, $\mathcal{Z}'_*=0$ on ${\rm CH}_0(X)$, so
$$\phi=Id_{{\rm CH}_0(X)}-P_*.$$
Now assume $x,\,y\in X$ and  $x=y$ modulo $F^3_{BB}{\rm CH}_0(X)$. Then as already noticed,
$$\mathcal{Z}_{{\rm lef},x}=\mathcal{Z}_{{\rm lef},y}\,\,{\rm in}\,\,{\rm CH}^2(X),$$ hence
$\mathcal{Z}_{{\rm lef},x}^i=\mathcal{Z}_{{\rm lef},y}^i$ in ${\rm CH}^{2i}(X)$ for any $i$, so that $P_*x=P_*y$ in ${\rm CH}_0(X)$.
It thus follows that $\phi(x-y)=x-y$. But $\phi$ is nilpotent, so the last condition implies $x-y=0$, as we wanted.
\begin{rema}{\rm One may wonder why this proof does not work as well to show (assuming the general  conjectures) that for any   $0$-cycle $z=\sum_in_ix_i$ of degree $0$,
if $z=0$ modulo $F^3_{BB}{\rm CH}_0(X)$, then $z=0$, a statement that is clearly not expected
to be true since ${\rm CH}_0(X)/F^3_{BB}{\rm CH}_0(X)$ is expected to be a direct summand in the ${\rm CH}_0$ of a surface. The reason is that for a cycle $x-y$
$$P_*(x-y)=\sum_i \lambda_i h_X^{2n-2i}(\mathcal{Z}_{{\rm lef},x}^i-\mathcal{Z}_{{lef},y}^i)$$
is divisible by $\mathcal{Z}_{{lef},x}-\mathcal{Z}_{{lef},y}$, while
for general $0$-cycles of degree $0$, $P_*(z)$ is not a priori divisible by $(\mathcal{Z}_{\rm lef})_*(z)$.}
  \end{rema}
The same argument as above gives  the following statement linking the Lefschetz standard conjecture for degree $2$ cohomology  and Conjecture \ref{conjsurpoints}.
\begin{prop}\label{proppourconj}  Let $X$ be a  smooth projective  manifold with a codimension $2$ cycle $\mathcal{Z}_{\rm lef}\in{\rm CH}^2(X\times X)$ such that   there exists a  polynomial $P(y)=\sum_{i=0}^n\gamma_i y^i$ with coefficients in $\gamma_i\in {\rm CH}^{2n-2i}(X)$, such that for  any $x\in X$,
\begin{eqnarray} \label{eqpolpartoutpourpropetappli} x=P(\mathcal{Z}_x)\,\,{\rm in}\,\,{\rm CH}_0(X).
\end{eqnarray}
Then  $X$ satisfies Conjecture \ref{conjsurpoints}.
\end{prop}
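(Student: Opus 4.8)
The plan is to prove only the nontrivial implication of Conjecture \ref{conjsurpoints}, namely that $x=y$ modulo $F^3_{BB}{\rm CH}_0(X)$ forces $x=y$ in ${\rm CH}_0(X)$; the reverse implication is tautological. So I would fix two points $x,y\in X$ with $z:=x-y\in F^3_{BB}{\rm CH}_0(X)$ and aim to show $z=0$.

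The key step is to feed $z$ into the correspondence $\mathcal{Z}_{\rm lef}\in{\rm CH}^2(X\times X)$ and to exploit that $z$ lies in $F^3_{BB}$. Since $\mathcal{Z}_{\rm lef}$ is a codimension $2$ cycle on $X\times X$, Lemma \ref{lepropjoshi} (which rests on Joshi's theorem) applies directly and gives $(\mathcal{Z}_{\rm lef})_*z=0$ in ${\rm CH}^2(X)$. Recalling that $\mathcal{Z}_x$ denotes the fiber $(\mathcal{Z}_{\rm lef})_*(x)$, i.e. the restriction of $\mathcal{Z}_{\rm lef}$ to $\{x\}\times X$, this says precisely $\mathcal{Z}_x=\mathcal{Z}_y$ in ${\rm CH}^2(X)$.

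From here the polynomial hypothesis finishes the argument. Because $\mathcal{Z}_x=\mathcal{Z}_y$ in the ring ${\rm CH}^*(X)$ for the intersection product, all powers agree, $\mathcal{Z}_x^i=\mathcal{Z}_y^i$ in ${\rm CH}^{2i}(X)$, hence $\gamma_i\cdot\mathcal{Z}_x^i=\gamma_i\cdot\mathcal{Z}_y^i$ for every $i$ and therefore $P(\mathcal{Z}_x)=P(\mathcal{Z}_y)$ in ${\rm CH}_0(X)$. Invoking the assumption (\ref{eqpolpartoutpourpropetappli}) for $x$ and for $y$, I conclude $x=P(\mathcal{Z}_x)=P(\mathcal{Z}_y)=y$ in ${\rm CH}_0(X)$, as desired.

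I expect the only genuine content to be the passage from cohomological to rational-equivalence triviality, which is entirely absorbed into Lemma \ref{lepropjoshi}: the membership $z\in F^3_{BB}$ is exactly what makes $(\mathcal{Z}_{\rm lef})_*z$ vanish in the Chow group rather than merely in cohomology. Once that vanishing is in hand, the remaining steps — taking powers and substituting into $P$ — are formal. Note that this argument replaces the nilpotence-of-correspondences input used in the heuristic derivation preceding the proposition by the explicit polynomial identity (\ref{eqpolpartoutpourpropetappli}), which is why the conclusion is unconditional here.
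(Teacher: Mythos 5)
Your argument is correct and is essentially identical to the paper's own proof: both apply Lemma \ref{lepropjoshi} to $x-y\in F^3_{BB}{\rm CH}_0(X)$ to get $\mathcal{Z}_x=\mathcal{Z}_y$ in ${\rm CH}^2(X)$, and then conclude via the polynomial identity (\ref{eqpolpartoutpourpropetappli}). You merely spell out the intermediate step that equal classes have equal powers under the intersection product, which the paper leaves implicit.
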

\begin{proof} By Lemma \ref{lepropjoshi}, a $0$-cycle $z\in F^3_{BB}{\rm CH}_0(X)$ satisfies $\mathcal{Z}_*(z)=0$ in ${\rm CH}^2(X)$.
If $x=y$ modulo $ F^3_{BB}{\rm CH}_0(X)$, we thus get $\mathcal{Z}_x=\mathcal{Z}_y$ in  ${\rm CH}^2(X)$ and thus
(\ref{eqpolpartoutpourpropetappli}) implies that $x=y$ in ${\rm CH}_0(X)$.
\end{proof}
Proposition  \ref{proppourconj} applies when $X=F_1(Y)$ is the Fano variety of lines of a smooth cubic fourfold $Y$ (see \cite{bedo}). Indeed in this case   the following  quadratic formula, which is a particular case of (\ref{eqpolpartoutpourpropetappli})
\begin{eqnarray}
\label{eqformquadpourcub}  [l]=\alpha S_l^2+\Gamma. S_l+\gamma'\,\,{\rm in}\,\,{\rm CH}_0(X),
\end{eqnarray}
is proved in
\cite{voisinpamq},
for any line $l\subset Y$, where $S_l\subset F_1(Y)$ is the surface of lines meeting $l$.

This suggests that,   in the hyper-K\"ahler case, one can formulate a conjecture even stronger than Conjecture  \ref{conjsurpoints}
and we refer to \cite{shenvial} for a more precise formulation, namely that the cycle $\Delta_X-P-\mathcal{Z}'$ that appears in the argument above is
in fact $0$, and not only nilpotent.
\begin{conj} Let $X$ be a projective  hyper-K\"ahler manifold with polarization $h_X$. Then  any Lefschetz cycle  $\mathcal{Z}_{{\rm lef}}\in{\rm CH}^2(X\times X)$ for $h_X$ satisfies a polynomial
equation
\begin{eqnarray} \label{eqpolpartout} \Delta_X=P(\mathcal{Z}_{{\rm lef}})+W\,\,{\rm in}\,\,{\rm CH}^{2n}(X\times X),
\end{eqnarray}
where $P(\mathcal{Z}_{{\rm lef}})=\sum_i \mathcal{Z}_{{\rm lef}}^i{\rm pr}_2^*\gamma_i$ for some cycle $\gamma_i\in{\rm CH}^{2n-2i}(X)$, and
$W$ is supported on $D\times X$ for some divisor $D\subset X$.
\end{conj}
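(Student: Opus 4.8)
The plan is to promote the cohomological argument given just before the statement into an exact equality in ${\rm CH}^{2n}(X\times X)$. First I would fix the candidate polynomial: Corollary \ref{corogamma} produces $P=\sum_i\mu_i\mathcal{Z}_{\rm lef}^i\,{\rm pr}_2^*h_X^{2n-2i}$ with $[P]^*={\rm Id}$ on $\bigoplus_i H^{2i,0}(X)$, so that $[\Delta_X-P]^*$ annihilates all holomorphic forms. Since the algebra of holomorphic forms is generated in degree $2$ with $H^{2,0}(X)=\mathbb{C}\sigma_X$, the image of $[\Delta_X-P]^*$ on $H^*(X,\mathbb{C})$ is of positive Hodge coniveau in every degree. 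Granting the generalized Hodge conjecture, I would then produce a divisor $D\subset X$ and a cycle $\mathcal{Z}'$ supported on $D\times X$ with $[\Delta_X-P-\mathcal{Z}']=0$ in $H^{4n}(X\times X,\mathbb{Q})$; absorbing $\mathcal{Z}'$ into $W$ settles the cohomological content of (\ref{eqpolpartout}).

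The second step is to reduce the Chow-theoretic equality to a fiberwise statement. Because $W$ is only required to be supported on some $D\times X$, and such cycles act trivially on ${\rm CH}_0(X)$, it suffices by a Bloch-Srinivas spreading argument \cite{blochsrinivas} to prove the pointwise relation $x=P(\mathcal{Z}_{{\rm lef},x})$ in ${\rm CH}_0(X)$ for $x$ general: a relation valid generically spreads to an equality $N\Delta_X=N\,P(\mathcal{Z}_{\rm lef})+W$ modulo cycles supported on a divisor times $X$. This is exactly hypothesis (\ref{eqpolpartoutpourpropetappli}) of Proposition \ref{proppourconj}, so at this stage the conjecture becomes the concrete assertion that a single rational point equals a universally defined polynomial in its Lefschetz class.

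The hard part will be this exact ${\rm CH}_0$-identity, and it is where the standard conjectures no longer suffice: the generalized Hodge conjecture together with the nilpotence conjecture yield only that ${\rm Id}-P_*$ is nilpotent on ${\rm CH}_0(X)$, hence the weaker congruence modulo $F^3_{BB}{\rm CH}_0(X)$ of Conjecture \ref{conjsurpoints}, with no control on the error term. To remove it I would seek an explicit multiplicative structure on the Chow ring --- a multiplicative Chow-K\"unneth decomposition in the sense of \cite{shenvial} for which $\mathcal{Z}_{\rm lef}$ is the degree-$2$ projector, or the tautological polynomial relations among the $\mathcal{Z}_{{\rm lef},x}^i$ and powers of $h_X$ expected in \cite{beauvoi}. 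The coefficients $\gamma_i$ would be pinned down by matching the fiberwise action on ${\rm CH}_0$, and the identity (\ref{eqpolpartoutpourpropetappli}) then verified from those relations, exactly as in the cubic-fourfold case where the quadratic formula (\ref{eqformquadpourcub}) of \cite{voisinpamq} is precisely the desired $P(\mathcal{Z}_{\rm lef})$. Since such relations are presently known only for restricted families, I would not expect a uniform proof, but rather a case-by-case verification propagated across moduli by the specialization argument of Remark \ref{remaspec}.
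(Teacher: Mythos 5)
The statement you are asked to prove is labelled as a conjecture in the paper, and the paper offers no proof of it: the surrounding text only explains that the Lefschetz standard conjecture in degree $2$, the generalized Hodge conjecture and the nilpotence conjecture together imply that $\Delta_X-P-\mathcal{Z}'$ is \emph{nilpotent} as a self-correspondence, and the conjecture is precisely the assertion that this cycle is in fact zero. Your proposal reproduces this reduction faithfully (the construction of $P$ via Corollary \ref{corogamma}, the appeal to the generalized Hodge conjecture to produce $\mathcal{Z}'$ supported on $D\times X$, and the Bloch--Srinivas spreading argument showing that, with $\mathbb{Q}$-coefficients, the identity (\ref{eqpolpartout}) is equivalent to the pointwise relation (\ref{eqpolpartoutpourpropetappli}) for a general point, as in Proposition \ref{proppourconj}). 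All of that is sound and consistent with the paper.

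The genuine gap is the one you yourself flag: the passage from ``${\rm Id}-P_*$ is nilpotent on ${\rm CH}_0(X)$'' to ``${\rm Id}-P_*=0$ on general points'' is exactly the content of the conjecture, and no argument is supplied for it. Invoking a multiplicative Chow--K\"unneth decomposition in the sense of \cite{shenvial}, or the tautological relations of \cite{beauvoi}, is not a proof but a restatement of the problem in a framework where it is equally open; these structures are established only for special families, and the quadratic formula (\ref{eqformquadpourcub}) for the Fano variety of lines, like the explicit identity of Proposition \ref{propoursymfin} for symmetric products, is an instance rather than a mechanism. Note also that the specialization argument of Remark \ref{remaspec} cannot propagate such a Chow-theoretic identity across moduli: specialization preserves the existence of a cycle with a prescribed cohomology class, but an equality in ${\rm CH}^{2n}(X\times X)$ of the form (\ref{eqpolpartout}) does not deform or specialize in families in any known way. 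So your write-up should be presented as a reduction of the conjecture to (\ref{eqpolpartoutpourpropetappli}) together with a survey of the known cases, not as a proof.
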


Turning to non hyper-K\"ahler projective manifolds, an easy  example of a variety satisfying Conjecture \ref{conjsurpoints} is a product of curves and surfaces. Indeed, if $X=\prod_{i=1}^m S_i$, where $S_i$ is either a curve or a surface, and $x=(x_1,\ldots, x_m)\in X$, $y=(y_1,\ldots, y_m)\in X$, then
$x=y$ in ${\rm CH}_0(X)/F^3_{BB}{\rm CH}_0(X)$ implies, using Definition (\ref{F3BB}), that $x_i=y_i$ in ${\rm CH}_0(S_i)$, so
that $x=y$ in ${\rm CH}_0(X)$. Another class of varieties satisfying the conjecture appears in  the following proposition.
\begin{prop}\label{propkummer} Let $X$ be a  desingularization of the quotient $K=A/\pm Id$ of an abelian variety by the $-1$-involution. Then $X$ satisfies Conjecture \ref{conjsurpoints}.
\end{prop}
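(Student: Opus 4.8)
The plan is to reduce the whole statement to the $\iota$-invariant part of ${\rm CH}_0(A)_\mathbb{Q}$, where $\iota=-{\rm Id}$, and then to exploit the multiplicative structure of Beauville's decomposition. Writing $\pi\colon A\to K$ for the degree-$2$ quotient and $r\colon X\to K$ for the resolution, I would first record that $K$ has quotient, hence rational, singularities, so that $r_*\colon {\rm CH}_0(X)_\mathbb{Q}\xrightarrow{\sim}{\rm CH}_0(K)_\mathbb{Q}$, while $\pi^*$ identifies ${\rm CH}_0(K)_\mathbb{Q}$ with ${\rm CH}_0(A)_\mathbb{Q}^{\iota}$; under these identifications a point $x\in X$ corresponds to $\tfrac12([a]+[-a])$ for a point $a\in A$ above it. Since $H^{1,0}(A)^{\iota}=0$ we have ${\rm Alb}(X)=0$, so $F^2{\rm CH}_0(X)={\rm CH}_0(X)_{hom}$ and $x-y$ is automatically Albanese-trivial; the content of Conjecture \ref{conjsurpoints} for $X$ is therefore the single nontrivial implication $x-y\in F^3_{BB}{\rm CH}_0(X)\Rightarrow x=y$. (That $X$ falls under the scope of the conjecture is clear, since $H^0(X,\Omega^{2p})=\wedge^{2p}H^{1,0}(A)$ and the odd forms vanish, so the algebra of forms is generated in degree $2$.)

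Next I would bring in the Beauville decomposition ${\rm CH}_0(A)_\mathbb{Q}=\bigoplus_s {\rm CH}_0(A)_s$, on which $n_*$ acts by $n^s$, whence $\iota_*=(-1)^s$, and on which the Pontryagin product $*$ is additive in $s$. The key input is Beauville's formula $([a])_s=\tfrac1{s!}\,\bar a^{*s}$, where $\bar a:=([a])_1\in {\rm CH}_0(A)_1$ is the Albanese class of $a$. Symmetrizing kills all odd components, giving $([a]+[-a])_s=\tfrac{2}{s!}\bar a^{*s}$ for $s$ even and $0$ for $s$ odd. Hence $x=y$ in ${\rm CH}_0(X)$ is equivalent to $\bar a^{*s}=\bar b^{*s}$ for all even $s$; and since $\bar a^{*2s}=(\bar a^{*2})^{*s}$, multiplicativity of $*$ collapses this infinite family of conditions to the single equation $\bar a^{*2}=\bar b^{*2}$. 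This already yields the second stated equivalence: $x=y$ if and only if the Beauville components of degree $\le 2$ agree, i.e. $x_{\le 2}=y_{\le 2}$ in ${\rm CH}_0(K)_{\le 2}$, because the $s=0$ component is always $[0]$ and the $s=1$ component vanishes.

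It then remains to detect the degree-$2$ component $\bar a^{*2}$ by a codimension-$2$ correspondence, so that Joshi's Lemma \ref{lepropjoshi} can be applied. Here I would use the Poincar\'e bundle $\mathcal P$ on $A\times\hat A$ and set $\Gamma=\tfrac12 c_1(\mathcal P)^2\in {\rm CH}^2(A\times\hat A)$, the codimension-$2$ graded piece of the Fourier kernel. Since $\mathcal F$ maps ${\rm CH}^g(A)_s$ isomorphically onto ${\rm CH}^s(\hat A)_s$, the induced $\Gamma_*$ annihilates ${\rm CH}_0(A)_s$ for $s\neq 2$ and is injective on ${\rm CH}_0(A)_2$. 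Moreover $(\iota\times{\rm Id})^*\mathcal P=\mathcal P^{-1}$, so $\Gamma$ is already $\iota$-invariant in the $A$-factor and descends through $\pi\times{\rm Id}$, then pulls back through $r\times{\rm Id}$ to a genuine element of ${\rm CH}^2(X\times\hat A)$. Tracking constants, $\Gamma_*(x-y)=c\,\mathcal F\!\bigl(\bar a^{*2}-\bar b^{*2}\bigr)$ for some nonzero $c$, with $\mathcal F$ injective. If now $x-y\in F^3_{BB}{\rm CH}_0(X)$, Lemma \ref{lepropjoshi} forces $\Gamma_*(x-y)=0$ in ${\rm CH}^2(\hat A)$, hence $\bar a^{*2}=\bar b^{*2}$ and therefore $x=y$, which proves Proposition \ref{propkummer}.

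The genuinely delicate point is not the Beauville algebra, which collapses cleanly once the formula $([a])_s=\tfrac1{s!}\bar a^{*s}$ is granted, but rather the functorial bookkeeping: verifying that $r_*$ is an isomorphism on ${\rm CH}_0(\cdot)_\mathbb{Q}$ for the resolution of the rational singularities of $K$, that the Beauville formula and the vanishing of odd components survive symmetrization and the pushforwards/pullbacks along $A\to K\leftarrow X$, and that the Fourier component $\Gamma$ descends to a well-defined class in ${\rm CH}^2(X\times\hat A)$ computing exactly the $s=2$ part. Once these compatibilities are in place, Lemma \ref{lepropjoshi} supplies the finish with no further input.
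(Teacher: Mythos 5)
Your proposal is correct and follows essentially the same route as the paper: reduce to the $(-1)$-invariant part of ${\rm CH}_0(A)_{\mathbb{Q}}$, use Beauville's Pontryagin/Fourier calculus to show that the symmetrized point class $\{a\}+\{-a\}$ has vanishing odd components and even components that are Pontryagin powers of the degree-$2$ piece, and then detect that degree-$2$ piece by a codimension-$2$ correspondence so that Lemma \ref{lepropjoshi} finishes the argument. The only cosmetic difference is your choice of detecting correspondence, namely the Fourier component $\tfrac12 c_1(\mathcal{P})^2$ on $A\times\hat A$, where the paper uses the correspondence $x\mapsto D_x^2$ on $A\times A$ attached to a polarization $\Theta$; Beauville's formula (\ref{eqbeau}) is precisely the dictionary between the two.
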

\begin{proof} We first observe that ${\rm CH}_0(X)$ is isomorphic by pull-back to
the invariant part of ${\rm CH}_0(A)$ under the involution $-Id$, so we only have to
show that if $x,\,y\in A$ and
$\{x\}+\{-x\}=\{y\}+\{-y\}$ in ${\rm CH}_0(A)/F^3_{BB}{\rm CH}_0(A)$, then $\{x\}+\{-x\}=\{y\}+\{-y\}$ in ${\rm CH}_0(A)$.
Here we denote as usual  by $\{x\}$ the $0$-cycle corresponding to the point $x$, so as to distinguish the addition in $A$ and the addition in
${\rm CH}_0(A)$.
Let $\Theta$ be an ample divisor on $A$, determining an isogeny $A\rightarrow \widehat{A}$,
$$x\mapsto D_x=\Theta_x-\Theta.$$
Recalling the Pontryagin product
$*$ defined on $0$-cycles of $A$ by
$$z*z'=\mu_*(z\times z'),$$
where $\mu:A\times A\rightarrow A$ is the sum map,
 we have  Beauville's formulas in
\cite[Proposition 6]{beauvillefourier} which give  in particular  the following equalities, for any $x\in A$ and any integer $k$:
\begin{eqnarray}
\label{eqbeau}
  \frac{\theta^{g-k}}{(g-k)!}D_x^k=\frac{\theta^g}{g!}*\gamma(x)^{*k},
 \end{eqnarray}
where \begin{eqnarray}\label{eqgammadex}\gamma(x):=\{0_A\}-\{x\}+\frac{1}{2}(\{0_A\}-\{x\})^{*2}+\cdots+\frac{1}{g}(\{0_A\}-\{x\})^{*g}=-{\rm log }(\{x\})\,\,{\rm in}\,\, {\rm CH}_0(A).
\end{eqnarray} Here the logarithm is taken with respect to the
Pontryagin product $*$ and the expansion is finite because $0$-cycles of degree $0$ are nilpotent for the Pontryagin
product.  Note that $\{x\}*\{-x\}=\{0_A\}$, that is, $\{-x\}$ is the inverse of $\{x\}$ for the Pontryagin product, so that
$\gamma(x)=-\gamma(-x)$.

We can assume that $\frac{\theta^g}{g!}=d\{0_A\}$ for some nonzero integer $d$, so that the formula becomes
\begin{eqnarray}
\label{eqbeauavec0A}
  \frac{\theta^{g-k}}{(g-k)!}D_x^k=d\gamma(x)^{*k}
 \end{eqnarray}
 for any $k$.
We now add-up formulas (\ref{eqbeauavec0A}) for $x$ and $-x$. For odd $k$ we get $0$, using
$$D_x=-D_{-x},\,\,\,\gamma(x)=-\gamma(-x),$$ and for even $k=2r$ we get
\begin{eqnarray}
\label{eqbeausym}
 2\frac{\theta^{g-2r}}{(g-2r)!}D_x^{2r}=d\gamma(x)^{*2r}\,\, {\rm in}\,\, {\rm CH}_0(A).
 \end{eqnarray}
We now observe that if $\{x\}+\{-x\}=\{y\}+\{-y\}$ in ${\rm CH}_0(A)/F^3_{BB}{\rm CH}_0(A)$, then, using Lemma  \ref{lepropjoshi},
$ D_x^2= D_y^2$  in ${\rm CH}^2(A)$, since clearly
$x\mapsto D_x^2$ is both $-1$-invariant and induced by a codimension $2$ self-correspondence of $A$.
So we conclude from (\ref{eqbeausym}) that, for any $r$
\begin{eqnarray}
\label{eqfinfinfin0}\gamma(x)^{*2r}=\gamma(y)^{*2r}\,\, {\rm in}\,\, {\rm CH}_0(A).\end{eqnarray}
Finally, we have (using again nilpotence so that the formal series reduce  in fact to finite sums).
$$\{x\}={\rm exp}(\gamma(x)),\,\,\{-x\}={\rm exp}(-\gamma(x))\,\,{\rm in}\,\,{\rm CH}_0(A)$$
and similarly for $y$, so that
\begin{eqnarray}
\label{eqfinfinfin} \{x\}+\{-x\}={\rm exp}(\gamma(x))+{\rm exp}(-\gamma(x))=2\sum_{r\geq 0}\frac{\gamma(x)^{*2r}}{(2r)!}\,\,{\rm in}\,\,{\rm CH}_0(A),
\end{eqnarray}
and similarly for $y$. Equations  (\ref{eqfinfinfin}) and (\ref{eqfinfinfin0}) imply $\{x\}+\{-x\}=\{y\}+\{-y\}$ in ${\rm CH}_0(A)$.
\end{proof}

We conclude with the following analogue  of Theorem \ref{theomarian} in the case of punctual  Hilbert schemes.
\begin{theo} \label{theolecasdehilb}Let $\Sigma$ be  a smooth surface. Then for any $n$, and any $z,\,z'\in \Sigma^{[n]}$,
$z$ is rationally equivalent to $z'$ in $\Sigma^{[n]}$ if and only if the corresponding $0$-cycles
 $Z$, $Z'$ of $\Sigma$ are rationally equivalent in $\Sigma$. A fortiori $\Sigma^{[n]}$ satisfies  Conjecture \ref{conjsurpoints} since a $0$-cycle in $F^3_{BB}{\rm CH}_0(\Sigma^{[n]})$ is annihilated
by $I_*$, where $I\subset \Sigma^{[n]})\times \Sigma$ is  the incidence correspondence.
\end{theo}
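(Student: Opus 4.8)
The plan is to reduce the whole statement to the symmetric product and then to promote rational equivalences on $\Sigma$ to genuine rational equivalences inside $\Sigma^{(n)}$. First I would pass through the Hilbert--Chow morphism $\rho:\Sigma^{[n]}\to \Sigma^{(n)}$. Its fibres are products of punctual Hilbert schemes, which are rationally connected, so $\rho_*:{\rm CH}_0(\Sigma^{[n]})\to {\rm CH}_0(\Sigma^{(n)})$ is an isomorphism; moreover the incidence $I$ factors through $\rho$ followed by the universal cycle $\mathcal I\subset \Sigma^{(n)}\times\Sigma$, so that $I_*=s\circ\rho_*$ with $s=\mathcal I_*:{\rm CH}_0(\Sigma^{(n)})\to{\rm CH}_0(\Sigma)$ sending a point of $\Sigma^{(n)}$ to the class of the associated $0$-cycle. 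The theorem is thus equivalent to the assertion $(\star)$: two points $z,z'\in\Sigma^{(n)}$ are rationally equivalent in $\Sigma^{(n)}$ if and only if $s(z)=s(z')$ in ${\rm CH}_0(\Sigma)$. The ``only if'' direction is immediate by applying $s$. Granting $(\star)$, the final assertion of the theorem is formal: since $I\in{\rm CH}^2(\Sigma^{[n]}\times\Sigma)$, Lemma~\ref{lepropjoshi} shows that $I_*$ annihilates $F^3_{BB}{\rm CH}_0(\Sigma^{[n]})$, so $z=z'$ modulo $F^3_{BB}$ forces $Z=Z'$ in ${\rm CH}_0(\Sigma)$, whence $z\sim z'$ by $(\star)$.

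For the substantial ``if'' direction of $(\star)$ I would assemble rational equivalences in $\Sigma^{(n)}$ from two elementary moves. First, any $W\subset\Sigma\times\mathbb{P}^1$ finite and flat of degree $d$ over $\mathbb{P}^1$ defines a morphism $t\mapsto W(t)$ from $\mathbb{P}^1$ to $\Sigma^{(d)}$, so that $W(0)\sim W(\infty)$ in $\Sigma^{(d)}$, and adding a fixed effective cycle $R$ of complementary degree gives $W(0)+R\sim W(\infty)+R$ in $\Sigma^{(n)}$. Second, on a fixed smooth curve $C\subset\Sigma$ two linearly equivalent effective divisors of degree $\le n$ lie in a common linear system $\mathbb{P}^r\subset C^{(n)}$, hence are rationally equivalent as points of $C^{(n)}\subset\Sigma^{(n)}$. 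Writing $s(z)=s(z')$ in the form $Z-Z'=\sum_i(W_i(0)-W_i(\infty))$ as an honest equality of $0$-cycles, the first move realizes each $W_i(0)\sim W_i(\infty)$ at the level of symmetric products.

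The difficulty, which I expect to be the main obstacle, is that assembling these moves naively yields only $Z+W\sim Z'+W$ in a larger symmetric product $\Sigma^{(n+m)}$: the moving cycles $W_i(t)$ leave the support of $Z$ for intermediate $t$, and since there is no subtraction in $\Sigma^{(n)}$ one cannot simply cancel the auxiliary $W$. I would resolve this degree-matching problem by working in the universal family of curves over a rational parameter space. Fix a very ample $A$ on $\Sigma$ with $A$ sufficiently positive, set $\mathbb{P}^M=|A|$ with universal curve $\mathcal C\subset\Sigma\times\mathbb{P}^M$, and form the relative symmetric product $\mathcal D^n=\mathrm{Sym}^n_{\mathbb{P}^M}(\mathcal C/\mathbb{P}^M)$ with its map $\Psi:\mathcal D^n\to\Sigma^{(n)}$. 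The fibre of $\Psi$ over a reduced cycle is the linear subspace of $|A|$ of curves through the given points, so $\Psi$ has rationally connected fibres and $\Psi_*$ is an isomorphism on ${\rm CH}_0$; it therefore suffices to connect lifts $\hat z,\hat z'$ inside $\mathcal D^n$. There the rationality of the base $\mathbb{P}^M$ makes moving the curve $C_t$ a rational-equivalence-for-free operation, while the second elementary move operates fibrewise inside each $C_t^{(n)}$, and these two freedoms together should transport $z$ to $z'$ at constant degree $n$. Making the interpolation across fibres precise --- i.e. producing an actual chain of rational curves in $\mathcal D^n$ out of the relation $Z-Z'=\sum_i\mathrm{div}(\psi_i)$ on $\Sigma$, rather than merely the up-to-$W$ statement --- is the technical heart; I expect the cleanest route is a Bloch--Srinivas style spreading of this equivalence over $\mathbb{P}^M$, exploiting that $\mathcal C\to\Sigma$ is a projective bundle to control the geometry of the sweep.
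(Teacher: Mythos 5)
Your reduction to the symmetric product via Hilbert--Chow, the ``only if'' direction, and the deduction of Conjecture \ref{conjsurpoints} from the main equivalence via Lemma \ref{lepropjoshi} all match the paper and are fine. You have also correctly isolated the one genuine difficulty: a rational equivalence $Z\sim Z'$ on $\Sigma$ only produces, in the first instance, an effective $0$-cycle $w\in\Sigma^{(k)}$ and a chain of rational curves in $\Sigma^{(n+k)}$ joining $z+w$ to $z'+w$, and there is no subtraction in a symmetric product. But your proposal does not close this gap. The route through the relative symmetric product $\mathcal D^n\to\mathbb{P}^M=|A|$ is left open exactly at the decisive step (you say yourself that making the interpolation across fibres precise ``is the technical heart''), and it is not clear it can be completed as stated: the relation $Z-Z'=\sum_i\mathrm{div}(\psi_i)$ involves functions on auxiliary curves that need not be members of $|A|$, and even after arranging that they are, the divisors $\mathrm{div}(\psi_i)$ introduce extra points, so that to stay at constant degree $n$ inside $\mathcal D^n$ you must again cancel an auxiliary effective cycle --- which is the same problem you started with. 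As it stands the argument is incomplete.

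The paper resolves the cancellation problem with a short, self-contained injectivity statement (Lemma \ref{newlemmapourhilb}): for any effective $w\in\Sigma^{(k)}$, the addition map $\mu_{w}:\Sigma^{(n)}\to\Sigma^{(n+k)}$ induces an \emph{injective} map $\mu_{w\,*}:{\rm CH}_0(\Sigma^{(n)})\to{\rm CH}_0(\Sigma^{(n+k)})$. This reduces by induction to $k=1$ and is proved using the nested incidence correspondence $\Gamma_{n+1}\subset\Sigma^{(n+1)}\times\Sigma^{(n)}$ together with the identity $\Gamma_{n+1\,*}\circ\mu_{z\,*}=Id+\mu_{z\,*}\circ\Gamma_{n\,*}$, which lets one peel off the added point and, iterating down through $\Sigma^{(n-1)},\ldots,\Sigma^{(1)}$, conclude that an element of ${\rm Ker}\,\mu_{z\,*}$ vanishes. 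With this lemma, the crude statement $\mu_{w\,*}(z)=\mu_{w\,*}(z')$ in ${\rm CH}_0(\Sigma^{(n+k)})$ that your elementary moves already provide immediately yields $z=z'$ in ${\rm CH}_0(\Sigma^{(n)})$. This injectivity is the missing idea; if you add it (or an equivalent cancellation device), your argument becomes essentially the paper's.
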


\begin{proof} Note that ${\rm CH}_0(\Sigma^{[n]})={\rm CH}_0(\Sigma^{(n)})$ for any $n$, so we will work with $\Sigma^{(n)}$. (It is  a standard fact that the quotient singularities of $\Sigma^{(n)}$ allow to do intersection theory at least with $\mathbb{Q}$-coefficients. The computations we make should be thought as computations in the invariant part under $\mathfrak{S}_n$ of the Chow groups of $\Sigma^n$.)
We start with the following lemma.
\begin{lemm} \label{newlemmapourhilb} For any integer $k\geq 0$ and any  $z\in \Sigma^{(k)}$, the addition
$$\mu_z:\Sigma^{(n)}\rightarrow \Sigma^{(k+n)},$$
$$w\mapsto w+z,$$
induces an injective map
$\mu_{z\,*}:{\rm CH}_0(\Sigma^{(n)})\rightarrow {\rm CH}_0(\Sigma^{(n+k)})$.
\end{lemm}
\begin{proof} Reasoning by induction on $k$, it clearly suffices to prove the result when $k=1$, so $z$ consists of one point $z\in\Sigma$.
The incidence  correspondence
$$\Gamma_{n+1}\in {\rm CH}^{2n}( \Sigma^{(n+1)}\times \Sigma^{(n)})$$
given by the nested symmetric product (one could work with the Hilbert schemes at this point)
induces a morphism
$$\Gamma_{n+1\,*}:{\rm CH}_0(\Sigma^{(n+1)})\rightarrow {\rm CH}_0(\Sigma^{(n)}).$$
One clearly  has
\begin{eqnarray}\label{eqnafindefin} \Gamma_{n+1\,*}\circ \mu_{z\,*} =Id_{{\rm CH}_0(\Sigma^{(n)})}+\mu_{z\,*}\circ \Gamma_{n\,*}.
\end{eqnarray}
A cycle in ${\rm Ker}\, \mu_{z\,*}$ is thus in the image of $\mu_{z\,*}$. Using the equations (\ref{eqnafindefin}) for
$n-1,\,n-2,\ldots1$, one proves similarly that
it is in the image of $\mu_{2z\,*},\, \mu_{3z\,*}$ and finally that it is zero.
\end{proof}
Let now $z,\,z'\in \Sigma^{[n]}$ be two points such that the corresponding $0$-cycles $Z,\,Z'$ are rationally equivalent in $\Sigma$.
Then there exist an effective $0$-cycle $W\in \Sigma^{(k)}$ and a rational curve in $\Sigma^{(n+k)}$ passing through
the two points $\mu_w(z)$ and $\mu_w(z')$. Thus $\mu_{w\,*}(z)=\mu_{w\,*}(z')$ in ${\rm CH}_0(\Sigma^{(n+k)})$. By Lemma
\ref{newlemmapourhilb}, $z=z'$ in ${\rm CH}_0(\Sigma^{(n)})$.
\end{proof}
We can also prove Theorem \ref{theolecasdehilb} by establishing a polynomial formula of the form
(\ref{eqpolpartout}) in this case and applying Proposition \ref{proppourconj}. Concretely, for  the symmetric product $X=\Sigma^{(n)}$, consider the codimension $2$ cycle
$\mathcal{Z}_{\rm lef}\in{\rm CH}^2(X\times X)$ defined as the incidence
correspondence:
$$\mathcal{Z}_{\rm lef}=\{(z_1,z_2)\in \Sigma^{(n)}\times \Sigma^{(n)},\,Z_1\cap Z_2\not=\emptyset\}.$$
\begin{prop}\label{propoursymfin}  There exist cycles $\gamma_i\in {\rm CH}^{2n-2i}(X)$ such that
the cycle $P=\sum_i \mathcal{Z}_{\rm lef}^i{\rm pr}_2^*\gamma_i$ satisfies, for any $z\in X$,
\begin{eqnarray}\label{eqpourpolhilbder} z=P(\mathcal{Z}_{{\rm lef},z})\,\,{\rm in}\,\,{\rm CH}_0(X).
\end{eqnarray}

\end{prop}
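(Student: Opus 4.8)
The plan is to pull everything back to the ordinary product $\Sigma^n$ via the quotient $q:\Sigma^n\to\Sigma^{(n)}$ and to exploit that on a surface the self-intersection of any $0$-cycle vanishes. Since $q_*q^*=n!\cdot\mathrm{id}$, the map $q^*$ is injective on ${\rm CH}(\Sigma^{(n)})_\mathbb{Q}={\rm CH}(\Sigma^n)_\mathbb{Q}^{\mathfrak{S}_n}$, so it suffices to prove \eqref{eqpourpolhilbder} after applying $q^*$, working $\mathfrak{S}_n$-equivariantly throughout. Writing $z=z_1+\cdots+z_n$ with the $z_i$ distinct (the general case, to which the statement reduces by spreading out) and $Z=\sum_i z_i\in{\rm CH}_0(\Sigma)$, the first step is the identity $q^*\mathcal{Z}_{{\rm lef},z}=\sum_{k=1}^n p_k^*Z$, where $p_k:\Sigma^n\to\Sigma$ is the $k$-th projection; this just records $q^{-1}(D_{z_i})=\bigcup_k p_k^{-1}(z_i)$ for the incidence locus $D_{z_i}=\{\,w:z_i\in\mathrm{Supp}\,w\,\}$. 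Simultaneously $q^*\{z\}=\sum_{\sigma\in\mathfrak{S}_n}z_{\sigma(1)}\times\cdots\times z_{\sigma(n)}$.

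The engine is that $Z\cdot Z=0$ in ${\rm CH}^4(\Sigma)=0$, so the classes $\zeta_k:=p_k^*Z$ satisfy $\zeta_k^2=0$. Setting $\zeta:=q^*\mathcal{Z}_{{\rm lef},z}=\sum_k\zeta_k$, only squarefree monomials survive, whence $\zeta^i=i!\,e_i(\zeta_1,\dots,\zeta_n)$ is $i!$ times the $i$-th elementary symmetric function; in particular $\zeta^n=n!\,\zeta_1\cdots\zeta_n=n!\,Z^{\times n}$. Expanding $Z^{\times n}=\sum_{f:[n]\to[n]}z_{f(1)}\times\cdots\times z_{f(n)}$, the bijective terms are exactly $q^*\{z\}$, while the non-bijective terms sit on the proper diagonals of $\Sigma^n$. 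The plan is to isolate the bijective part by Möbius inversion over the partition lattice $\Pi_n$: for $\iota_\pi:\Sigma^{|\pi|}\hookrightarrow\Sigma^n$ the diagonal attached to $\pi$, one has $\iota_{\pi*}(Z^{\times|\pi|})=\sum_{f\ \text{constant on the blocks of }\pi}z_{f(1)}\times\cdots$, and hence $q^*\{z\}=\sum_{\pi\in\Pi_n}\mu(\hat0,\pi)\,\iota_{\pi*}(Z^{\times|\pi|})$.

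It then remains to convert each diagonal contribution into a power of $\mathcal{Z}_{{\rm lef},z}$ times a $z$-independent cycle. Restricting $\zeta$ to $D_\pi$ gives $\iota_\pi^*\zeta=\sum_{b\in\pi}|b|\,Z_b$ (one summand per block, weighted by its size), so using $Z_b^2=0$ once more, $\iota_\pi^*(\zeta^{|\pi|})=|\pi|!\,\big(\prod_{b}|b|\big)\,Z^{\times|\pi|}$. By the projection formula this yields $\iota_{\pi*}(Z^{\times|\pi|})=\tfrac{1}{|\pi|!\prod_b|b|}\,[D_\pi]\cdot\zeta^{|\pi|}$, in which $z$ enters only through $\zeta^{|\pi|}=q^*(\mathcal{Z}_{{\rm lef},z}^{|\pi|})$. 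Grouping the combinatorial data by $i=|\pi|$, the $\mathfrak{S}_n$-invariant cycle $\Gamma_i:=\sum_{|\pi|=i}\tfrac{\mu(\hat0,\pi)}{i!\prod_b|b|}\,[D_\pi]$ descends, via $q^*q_*=\sum_\sigma\sigma^*$, to a well-defined $\gamma_i\in{\rm CH}^{2n-2i}(\Sigma^{(n)})$ with $q^*\gamma_i=\Gamma_i$ (take $\gamma_i=\tfrac1{n!}q_*\Gamma_i$). Then $q^*\{z\}=\sum_i\Gamma_i\cdot q^*(\mathcal{Z}_{{\rm lef},z}^i)=q^*\big(\sum_i\mathcal{Z}_{{\rm lef},z}^i\gamma_i\big)$, and injectivity of $q^*$ gives \eqref{eqpourpolhilbder}.

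The geometric input is thus minimal, essentially just $Z\cdot Z=0$ on a surface, and I expect the main obstacle to be purely bookkeeping: verifying that the equivariant Möbius inversion isolates exactly the bijective term, that the weighted diagonal classes assemble into genuinely $z$-independent coefficients $\gamma_i$ descending through the ramified quotient $q$, and finally passing from $z$ with distinct support to arbitrary $z\in\Sigma^{(n)}$. The last point is handled by noting that both sides of \eqref{eqpourpolhilbder} are restrictions to the fibre $\{z\}\times X$ of the fixed cycles $\Delta_X$ and $\sum_i\mathcal{Z}_{{\rm lef}}^i\,{\rm pr}_2^*\gamma_i$ on $X\times X$, so that the identity over a dense open set of parameters $z$ propagates to all $z$ by a standard specialization argument.
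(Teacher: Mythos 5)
Your proof is correct and follows the same route as the paper's: pull back to $\Sigma^n$, use the vanishing of products of point classes on the surface $\Sigma$ to identify $(q^*\mathcal{Z}_{{\rm lef},z})^n$ with the sum of $z_{f(1)}\times\cdots\times z_{f(n)}$ over all self-maps $f$ of $\{1,\dots,n\}$, and peel off the non-injective terms using diagonal correction classes. The only differences are cosmetic: where the paper extracts the coefficients $\gamma_i$ by an induction on the cardinality of the image of $f$, you obtain them in closed form by M\"obius inversion on the partition lattice, and you make explicit the specialization step handling $z$ with non-reduced support, which the paper leaves implicit.
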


\begin{proof} It suffices to check  equality (\ref{eqpourpolhilbder})  after pull-back to $\Sigma^n$.  Let $z=x_1+\ldots+z_n\in \Sigma^{(n)}$ and denote by $\tilde{z}\in {\rm CH}_0(\Sigma^n)$ the pull-back of $\{z\}\in {\rm CH}_0(\Sigma^{(n)})$. Then $\mathcal{Z}_{{\rm lef},z}=\sum_i x_i+\Sigma^{(n-1)}$. In order to compute the self-intersection of this cycle, we pull-back to $\Sigma^n$, and denote the resulting cycle by $\mathcal{Z}_{{\rm lef},z}'$. We get by definition, assuming the points $x_i$ are distinct
$$\mathcal{Z}_{{\rm lef},z}'=\sum_{i,j} {\rm pr}_j^*x_{i}.$$
Using the fact that cycles $Z_{ij}:={\rm pr}_j^*x_{i}$ satisfy $Z_{ij}Z_{kj}=0$ for any $j,\,k$,
we get
$$ (\mathcal{Z}_{{\rm lef},z}')^n =\sum_{ f} Z_f,$$
where  $f$ runs through the set of applications from $\{1,\ldots,n\}$ to itself and
\begin{eqnarray}
\label{eqdefZsigmaf} Z_{f}=\prod_{i=1}^n Z_{f(i) i}=(x_{f(1)},\ldots, x_{f(n)})\in{\rm CH}_0(\Sigma^n).
\end{eqnarray}
We now stratify the set of maps $f$ according to their combinatorics. Up to the action of the symmetric group $\mathfrak{S}_n$, such a map is characterized by the image of $f$ and the partition of $\{1,\ldots,n\}$ given by the non-empty preimages $f^{-1}(s)\subset \{1,\ldots,n\}$.
The partition $(1,\ldots,1)$ where all sets have cardinality $1$ produce
the term $\sum_{\sigma\in \mathfrak{S}_n}(x_{\sigma(1)},\ldots,x_{\sigma(n)})$ which is the pull-back  $\tilde{z}\in{\rm CH}_0(\Sigma^n)$ of
the point $z=x_1+\ldots+x_n\in \Sigma^{(n)}$.
The next case is the case of a partition $(2,1,\ldots,1)$ for which exactly two points have the same image, the map $f$ being injective on the remaining set. The contribution
of these partitions to (\ref{eqdefZsigmaf}) is the sum
\begin{eqnarray}\label{eqcasdemulti2} \sum_{i\not=j,\sigma\in \mathfrak{S}_n}\sigma(x_1,\ldots, x_i,x_i,x_{i+1},\ldots, \widehat{x_{j+1}},\ldots,x_n),
\end{eqnarray}
where in each $n$-uple, exactly one  point appears twice and another point is missing.
We observe
that (\ref{eqcasdemulti2})  appears in the development of
\begin{eqnarray}\label{eqcestlader} (\mathcal{Z}_{{\rm lef},z}')^{n-1}\cdot \Delta_{(2,1,\ldots,1)},
\end{eqnarray}
 where the codimension $2$ cycle $\Delta_{(2,1,\ldots,1)}$ is the sum of the partial diagonals
$\Delta_{ij}$  where $y_i=y_j$.
Developing (\ref{eqcestlader}), we   get that
$$\tilde{z}= (\mathcal{Z}_{{\rm lef},z}')^n-(\mathcal{Z}_{{\rm lef},z}')^{n-1}\cdot \Delta_{(2,1,\ldots,1)}+R_z\,\,{\rm in}\,\,{\rm CH}_0(\Sigma^n),$$
where the cycle $R_z$ consists in a sum of terms $(x_{f(1)},\ldots, x_{f(n)})$ for which the image of the  map $f$ has cardinality
$\leq n-2$. More generally, one checks by induction on the cardinality of $f$ that Proposition \ref{propoursymfin} holds, where one can take for the cycles $\gamma_i$ a combination with rational coefficients of the diagonals
$\Delta_I$ where $|I|=n-i$. Here the cardinality of $I=\{I_1,\ldots, I_k\}$ with $\{1,\ldots,n\} =I_1\sqcup \ldots\sqcup I_k$ is the number
$k$ and the diagonal $\Delta_I\subset \Sigma^n$ is defined by the equations $y_j=y_k$ when $i,\,k\in I_l$ for some $l$.
\end{proof}

CNRS, Institut de Math\'{e}matiques de Jussieu-Paris rive gauche

claire.voisin@imj-prg.fr
    \end{document}